\colorlet{myblue}{cyan!80!blue}
\colorlet{myred}{red!50}
\newcommand{\Yred}{\Yfillcolour{myred}}
\newcommand{\Ycyan}{\Yfillcolour{myblue}}
\newcommand{\Ygray}{\Yfillcolour{lightgray}}
\newtheorem{thm}{Theorem}[section]
\newtheorem{lem}[thm]{Lemma}
\newtheorem{prop}[thm]{Proposition}
\newtheorem{rem}[thm]{Remark}
\newtheorem{ex}[thm]{Example}
\newtheorem{defi}[thm]{Definition}
\newcommand{\la}{\lambda}
\newcommand{\x}{\mathbf{x}}
\newcommand{\y}{\mathbf{y}}
\newcommand{\lb}{\llbracket}
\newcommand{\rb}{\rrbracket}
\DeclareMathOperator{\F}{F}
\newcommand{\GL}{{\rm GL}}
\newcommand{\SSYT}{{\rm SSYT}}
\newcommand{\RSK}{{\rm RSK}}
\newcommand{\C}{\mathbb{C}}
\newcommand{\col}{{\rm col}}
\newcommand{\row}{{\rm row}}
\newcommand{\asym}{{\rm asym}}
\newcommand{\odd}{{\rm odd}}
\newcommand{\rk}{{\rm rk}}
\newcommand{\Ret}{\mathbf{R}}
\newcommand{\Set}{\mathbf{S}}
\newcommand{\deff}[1]{\emph{\color{cyan!80!blue} #1}} 
\newcommand{\bb}[1]{\mathbb{#1}}
\newcommand{\mc}[1]{\mathcal{#1}}
\newcommand{\wh}[1]{\widehat{#1}}
\newcommand{\YoungBlue}[2]{
\begin{tikzpicture}
\tyng(0cm,0cm,#1)
\Ycyan
\tgyoung(0cm,0cm,#2)
\end{tikzpicture}
}
\newcommand{\YoungRed}[2]{
\begin{tikzpicture}
\tyng(0cm,0cm,#1)
\Yred
\tgyoung(0cm,0cm,#2)
\end{tikzpicture}
}
\title{Growth diagram proofs for the Littlewood identities}
\author{Florian Schreier-Aigner}
\address[F.~Schreier-Aigner]{University of Vienna, Austria}
\email{florian.schreier-aigner@univie.ac.at}
\thanks{This research was funded in whole by the Austrian Science Fund (FWF) \href{https://dx.doi.org/10.55776/P34931}{doi:10.55776/P34931} and \href{https://dx.doi.org/10.55776/P36863}{doi:10.55776/P36863}.}
\begin{document}

\begin{abstract}
The (dual) Cauchy identity has an easy algebraic proof utilising a commutation relation between the up and (dual) down operators. By using Fomin's growth diagrams, a bijective proof of the commutation relation can be ``bijectivised'' to obtain RSK like correspondences.
In this paper we give a concise overview of this machinery and extend it to Littlewood type identities by introducing a new family of relations between these operators, called projection identities. Thereby we obtain infinite families of bijections for the Littlewood identities generalising the classical ones. We believe that this approach will be useful for finding bijective proofs for Littlewood type identities in other settings such as for Macdonald polynomials and their specialisations, alternating sign matrices or vertex models.



\end{abstract}

\maketitle


\section{Introduction}

The Robinson--Schensted--Knuth correspondence (RSK) is a combinatorial algorithm with many applications in various areas of mathematics including combinatorics, representation theory, geometry, and probability theory.
The algorithm was first described by Robinson \cite{Robinson38} for permutations and independently  by Schensted \cite{Schensted61} for words; the connection between these algorithms was pointed out by Sch\"utzenberger. 
 In a seminal paper Knuth \cite{Knuth70b} extended the previous correspondence to a bijection between matrices with non-negative entries and pairs of semistandard Young tableaux of the same shape, called RSK.
In particular RSK  gives a combinatorial proof of the Cauchy identity for Schur functions
\begin{equation}
\label{eq:Cauchy0}
\sum_{\la} s_\la(\x) s_\la(\y) = \prod_{i,j}\frac{1}{1-x_iy_j}.
\end{equation}
By restricting to the squarefree monomial $x_1 \ldots x_n y_1 \ldots y_n$ on both sides of \eqref{eq:Cauchy0} we obtain the famous identity
\begin{equation}
\label{eq:squarefree Cauchy identity}
\sum_{\lambda \vdash n} (f_\lambda)^2 = n!,
\end{equation}
where $f_\lambda$ is the number of \emph{standard Young Tableaux} (SYTs) of shape $\lambda$. 
There are two important variations of RSK: ``the''\footnote{Actually Burge \cite{Burge74} introduces four correspondences where usually the bijection introduced in \cite[Section 3]{Burge74} is referred as the Burge correspondence.} Burge correspondence \cite{Burge74} which yields another bijective proof of \eqref{eq:Cauchy0} and dual RSK ($\RSK^*$), introduced by Knuth \cite{Knuth70b}, which gives a bijective proof of the dual Cauchy identity
\begin{equation}
\label{eq:dual Cauchy0}
\sum_{\la} s_\la(\x) s_{\la^\prime}(\y) = \prod_{i,j}(1+x_iy_j).
\end{equation}
The fact that RSK and its variations yield bijective proofs of these identities can be seen as one of the reasons that they have many applications. In particular all of the above identities have a representation theoretic interpretation.
 For $\x=(x_1,\ldots,x_n)$ and $\y=(y_1,\ldots,y_m)$ the right hand side of \eqref{eq:Cauchy0} and \eqref{eq:dual Cauchy0} are the characters of the $\GL_n(\mathbb{C}) \times \GL_m(\mathbb{C})$-module $\bigvee\left(\mathbb{C}^n \otimes \mathbb{C}^m\right)$  and  $\bigwedge\left(\mathbb{C}^n \otimes \mathbb{C}^m\right)$  respectively while the left hand sides are their decompositions into irreducible characters.
 Similarly for \eqref{eq:squarefree Cauchy identity}, the right hand side is the dimension of the group algebra $\C[S_n]$ while the left hand gives a formula for the dimension of its decomposition $\bigoplus_{\la \vdash n}(S^\la)^{f_\la}$ into irreducible representations of $S_n$. 
There exist many variations of the above identities by considering representations with respect to different groups or algebras, or by regarding different families of tableaux respectively. In particular these different settings led to variations as well as generalisations of RSK; see for example \cite{Berele86, ColmenarejoOrellanaSaliolaSchillingZabrocki20, HalversonLewandowski05, Okada91, PatelPatelStokke22, Proctor90b, Sundaram90, Sundaram90b, Terada93, vanLeeuwen05}.

In statistical physics and probability theory the Cauchy identity has another interpretation. A partition $\lambda$ is interpreted as a one dimensional particle configuration using its Maya diagram where the particle configuration corresponding to $\la$ appears with probability proportional to $s_\lambda(\x)s_\lambda(\y)$, see for example \cite{Okounkov01}. 
 As a consequence, the Cauchy identity \eqref{eq:Cauchy0} yields a formula for the partition function of this system. It turns out that RSK can thereby be used to study certain stochastic systems such as TASEP. In the last decades many (probabilistic) generalisations of (dual) RSK and its specialisations were introduced to study further probabilistic models, compare for example with \cite{BufetovMatveev18, FriedenSchreierAigner24plus, MatveevPetrov17} and references within. Typically these generalisations of RSK yield proofs for Cauchy identities of symmetric polynomials or symmetric functions generalising Schur polynomials including $q$-Whittaker polynomials, Hall--Littlewood polynomials or Macdonald polynomials. \medskip

A family of identities closely related to the (dual) Cauchy identity are the Littlewood identities
\begin{align}
\label{eq:littlewood even col}
\sum_{\substack{\la \\ \la^\prime \text{ even}}} s_\la(\x) 
&= \prod_{1 \leq i<j \leq n}\frac{1}{1-x_ix_j},\\
\label{eq:littlewood identity}
\sum_{\la} s_\la(\x) 
&= \prod_{i=1}^n\frac{1}{1-x_i}\prod_{1\leq i<j \leq n}\frac{1}{1-x_ix_j},\\
\label{eq:littlewood even row}
\sum_{\substack{\la \\ \la \text{ even}}} s_\la(\x) 
&= \prod_{i=1}^n\frac{1}{1-x_i^2}\prod_{1\leq i<j \leq n}\frac{1}{1-x_ix_j},\\
\nonumber\\
\label{eq:littlewood 1 asym}
\sum_{\substack{\la \\ \text{ $1$-asymmetric}}} s_\la(\x) 
&=\prod_{1 \leq i<j \leq n}(1+x_ix_j) ,\\
\label{eq:littlewood -1 asym}
\sum_{\substack{\la \\ \text{ $-1$-asymmetric}}} s_\la(\x) 
&=\prod_{i=1}^n(1+x_i^2)\prod_{1 \leq i<j \leq n}(1+x_ix_j). 
\end{align}
Note that \eqref{eq:littlewood identity} is often called ``the'' Littlewood identity. These identities have bijective proofs which are based on RSK, the Burge correspondence and dual RSK. The identity \eqref{eq:littlewood identity} follows immediately by a symmetry property of RSK \cite[Theorem 3]{Knuth70b} or the Burge correspondence respectively, the identities \eqref{eq:littlewood even col} and \eqref{eq:littlewood even row} need a closer analysis of RSK \cite[Theorem 4]{Knuth70b}. The first bijective proofs for \eqref{eq:littlewood 1 asym} and \eqref{eq:littlewood -1 asym} were presented by Burge \cite{Burge74}.
Again there is a connection to representation theory, in the case of the Littlewood identities this is by the Weyl denominator formula.
 \medskip

While RSK is usually described as an insertion algorithm, there are various different descriptions such as Fulton's \emph{matrix-ball construction} \cite{Fulton97} building on  Viennot's shadowing \cite{Viennot77}, or most prominently Fomin's description by using \emph{growth diagrams} \cite{Fomin86, Fomin95}. Fomin's construction builds on two key observations. Firstly, we can interpret both sides of the Cauchy identity as certain pairs of chains in Young's lattice.  The left hand side of \eqref{eq:Cauchy0} is a weighted sum over pairs of chains $((P^{(i)})_i,(Q^{(j)})_j)$ of the form\footnote{For simplicity we explain the case of symmetric polynomials, i.e., the restriction to $\x=(x_1,\ldots,x_n)$ and $\y=(y_1,\ldots,y_m)$.}
\begin{equation}
\label{eq:chain 1}
\emptyset = P^{(0)} \prec \cdots \prec P^{(n)} = \la = Q^{(m)} \succ \cdots \succ Q^{(0)} = \emptyset
\end{equation}
and the right hand side of \eqref{eq:Cauchy0} is the shown product times the weight of the trivial pair $((P^{(i)})_i,(Q^{(j)})_j)=((\emptyset)_i,(\emptyset)_j)$ which satisfies
\begin{equation}
\label{eq:chain 2}
\emptyset = Q^{(m)} \succ \cdots \succ Q^{(0)} = P^{(0)} \prec \cdots \prec P^{(n)} = \emptyset.
\end{equation}
For precise definitions see Section~\ref{sec:zwei}. 
The second observation is that we can inductively transform each pair of chains of the first form into the trivial pair of chains by using \emph{local growth rules} and thereby gain the wanted product of the right hand side of \eqref{eq:Cauchy0}.

Both observations can be reformulated algebraically by using up and down operators. Firstly we can express the Schur polynomial $s_\la(\x)$ (resp., $s_\la(\y)$) by using up operators $U_{x_i}$ (resp., down operators $D_{y_j}$).
This implies that the weighted sum over all pairs of chains of the form as in \eqref{eq:chain 1} is equal to
\begin{equation}
\label{eq:chain 3}
\left\langle D_{y_1} \cdots D_{y_m} U_{x_n} \cdots U_{x_1} \emptyset,\emptyset  \right\rangle,
\end{equation}
while the right hand side of \eqref{eq:Cauchy0} times the weight of the trivial chain as in \eqref{eq:chain 2} is equal to
\begin{equation}
\label{eq:chain 4}
\prod_{\substack{1 \leq i \leq n \\ 1 \leq j \leq m}}\frac{1}{1-x_iy_j} \left\langle  U_{x_n} \cdots U_{x_1} D_{y_1} \cdots D_{y_m} \emptyset,\emptyset  \right\rangle.
\end{equation}
The second observation can be reformulated as a commutation relation of the operators $U_{x_i}$ and $D_{y_j}$ stating that we obtain a factor $(1-x_iy_j)^{-1}$ when commuting these operators.
It is not difficult to see that \eqref{eq:chain 4} can be obtained from \eqref{eq:chain 3} by iteratively commuting  all up with all down operators. In order to obtain a bijective correspondence for the Cauchy identity, it therefore suffices to find a bijective proof of the commutation relation which is typically called local growth rules.
In the case of the (dual) Cauchy identity for Schur polynomials, this approach leads to an infinite family of bijective proofs.
More generally, Fomin's approach yields a framework for (bijective) proofs of Cauchy type identities with many possibilities for generalisations by choosing different posets or weights, i.e., define different up and down operators. This approach was used for example by Bufetov and Matveev \cite{BufetovMatveev18} to obtain a combinatorial proof of the Cauchy identity for Hall--Littlewood polynomials or in \cite{FriedenSchreierAigner24plus} by Frieden and the author to obtain a generalisation of dual RSK for Macdonald polynomials.
\medskip

The motivation for this article is twofold. Firstly we present a concise summary of Fomin's framework for both the Cauchy and dual Cauchy identity for Schur polynomials and how it relates to the previous work by Knuth and Burge. For the case of the Cauchy identity this was already done by van Leeuwen \cite{vanLeeuwen05}, however the approach there is to define the local growth rules for RSK recursively by using the local growth rules of the Robinson correspondence, i.e., the restriction of RSK to permutations, while we give a direct description.

The second goal of this paper is to extend the framework of growth diagrams such that it is applicable to Littlewood identities. As in the Cauchy case, the approach is a two step process. In the first step we obtain an algebraic proof by introducing new relations between the up and down operators which we call \emph{projection identities}. In a second step we provide bijective proofs for these identities which become the building blocks for the bijective proofs of the Littlewood identities.
\medskip

The paper is structured in the following way. In Section~\ref{sec:zwei} we present Fomin's framework for the (dual) Cauchy identity. In particular in Section~\ref{sec:up and downs} we introduce the up and down operators, their commutation relations and present bijective proofs thereof, i.e., the (dual) row and column insertions. In Section~\ref{sec:growths} (dual) growth diagrams are introduced. We show how they yield a bijective proof of the (dual) Cauchy identity and how the typical insertion descriptions of RSK can be obtained from local growth rules. In Section~\ref{sec:skew} we show how this framework can be extended easily for the skew (dual) Cauchy identity and how one can obtain the (dual) Pieri rule as an immediate consequence.
In Section~\ref{sec:drei} we extend the framework to Littlewood identities by presenting the projection identities and their combinatorial proofs in Section~\ref{sec:projection identities}. In Section~\ref{sec:triangular} we define triangular (dual) growth diagrams and show how they yield bijective proofs for the Littlewood identities.
In Section~\ref{sec:vier} we present the general framework for up and down operators and how Cauchy and Littlewood type identities follow from commutation relations and projection identities between these operators.

\section{The Cauchy identity and its dual}
\label{sec:zwei}
\subsection{Preliminaries}

A \deff{partition} $\la$ is a weakly decreasing sequence of positive integers $\la=(\la_1,\ldots,\la_k)$. We call $k$ the \deff{length} of $\la$ and the $\la_i$ its \deff{parts}. We say that $\la$ is a partition of $n=|\la|=\la_1+\cdots+\la_k$ which is denoted by $\la \vdash n$. The \deff{Young diagram} of $\la$ is a collection of left-justified boxes, also called \deff{cells}, where the $i$-th row from bottom\footnote{Note that we are using French convention; for English convention one replaces ``bottom''  by ``top''.} has $\la_i$ boxes. As usual we identity a partition with its Young diagram. The \deff{conjugate} $\la^\prime=(\la_1^\prime,\ldots,\la_{m}^\prime)$ of a partition $\la$ is obtained by reflecting the Young diagram of $\la$ along the diagonal $x=y$. For a partition $\la$ let $l=l(\la)$ be the length of the \deff{Durfee square} which is given by $l(\la)=\max_i(\la_i \geq i)$. The \deff{Frobenius notation} of a partition $\la$ is defined as the pair $(\la_1-1,\ldots,\la_l-l|\la_1^\prime-1,\ldots,\la_l^\prime-l)$.

\begin{ex}
The partition $\la=(6,5,3,3,1)$ with Frobenius notation $(5,3,0|4,2,1)$ and its conjugate $(5,4,4,2,2,1)$ with Frobenius notation $(4,2,1|5,3,0)$ are shown on the left and right respectively. In both cases the cell  $(4,2)$ is marked in blue; note that we use Cartesian coordinates for cells.
\begin{center}
\begin{tikzpicture}
\tyng(0cm,0cm,6,5,3,3,1)
\tyng(6cm,0cm,5,4,4,2,2,1)
\Ycyan
\tgyoung(0cm,0cm,,:::;)
\tgyoung(6cm,0cm,,:::;)
\end{tikzpicture}
\end{center}
\end{ex}

A filling of the cells of $\la$ with positive integers is called
\begin{itemize}
\item a \deff{semistandard Young tableau (SSYT)} of shape $\la$ if the entries are weakly increasing along rows and strictly increasing along columns,
\item a \deff{dual semistandard Young tableau} of shape $\la$ if the entries are strictly increasing along rows and weakly increasing along columns.
\end{itemize}

We denote by $\SSYT_\la(n)$ the set of semistandard Young tableaux of shape $\la$ with entries at most $n$ and by $\SSYT_\la^*(n)$ the set of dual SSYTs of shape $\la$ and entries at most $n$. It is immediate that SSYTs of shape $\la$ and dual SSYTs of shape $\la^\prime$ are in bijection by conjugating the Young diagram together with its entries. For a (dual) SSYT $T$ we denote by $\x^T$ the \deff{weight} of $T$ which is defined as 
\[
\x^T=\prod_{i \geq 1} x_i^{\#( i \text{ entries in }T)}.
\]

\begin{ex}
The following are a semistandard Young tableau (left) and a dual semistandard Young tableau (right), both of shape $(5,4,2,1)$ and with weight $x_1x_2^3x_3^2x_4^2x_5^4$.
\begin{center}
\young(12245,2335,45,5) \qquad \qquad
\young(12345,2345,25,5)
\end{center}
\end{ex}

For a sequence $\x=(x_1,\ldots, x_n)$ of variables, we define the \deff{Schur polynomial} $s_\la(\x)$ as
\[
s_\la(\x) = \sum_{T \in \SSYT_\la(n)}\x^T.
\]
The following is a fundamental theorem in the theory of symmetric polynomials.

\begin{thm}
\label{thm:Cauchy identities}
Let $\x=(x_1,\ldots,x_n)$ and $\y=(y_1,\ldots,y_m)$ be two sequences of variables. Then
\begin{align}
\label{eq:Cauchy}
\sum_{\la} s_\la(\x)s_\la(\y) &= \prod_{\substack{1 \leq i \leq n \\ 1 \leq j \leq m}} \frac{1}{1-x_iy_j},  &(\text{Cauchy identity})\\
\label{eq:dual Cauchy}
\sum_{\la} s_\la(\x) s_{\la^\prime}(\y) &= \prod_{\substack{1 \leq i \leq n \\ 1 \leq j \leq m}} (1+x_iy_j), &(\text{dual Cauchy identity})
\end{align}
where both sums are over all partitions $\la$.
\end{thm}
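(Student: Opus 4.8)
The plan is to deduce both identities from the Robinson--Schensted--Knuth correspondence and its dual; this is also the natural point of departure for the growth--diagram machinery developed in the rest of this section, so I will in addition sketch the operator-theoretic reformulation that the paper ultimately uses.

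For the Cauchy identity \eqref{eq:Cauchy}, first rewrite the right-hand side as a generating function over matrices: since $\frac{1}{1-x_iy_j}=\sum_{a\ge 0}(x_iy_j)^a$,
\[
\prod_{\substack{1\le i\le n\\ 1\le j\le m}}\frac{1}{1-x_iy_j}=\sum_{A}\ \prod_{i,j}(x_iy_j)^{A_{ij}},
\]
the sum ranging over all $n\times m$ matrices $A=(A_{ij})$ with entries in $\mathbb{Z}_{\ge 0}$. On the left-hand side, by definition of the Schur polynomial, $s_\la(\x)s_\la(\y)=\sum_{(Q,P)}\x^{Q}\y^{P}$ where $Q\in\SSYT_\la(n)$ and $P\in\SSYT_\la(m)$. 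The RSK correspondence provides, for each $\la$, a bijection between such pairs $(Q,P)$ and matrices $A$ whose $i$-th row sum is the number of $i$'s in $Q$ and whose $j$-th column sum is the number of $j$'s in $P$; hence $\x^{Q}\y^{P}=\prod_{i,j}(x_iy_j)^{A_{ij}}$, and summing over $\la$ identifies the two sides term by term. (A second proof is obtained by replacing RSK with the Burge correspondence.)

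For the dual Cauchy identity \eqref{eq:dual Cauchy}, expand instead
\[
\prod_{\substack{1\le i\le n\\ 1\le j\le m}}(1+x_iy_j)=\sum_{A\in\{0,1\}^{n\times m}}\ \prod_{i,j}(x_iy_j)^{A_{ij}} ,
\]
and apply dual RSK ($\RSK^*$): it gives a bijection between $0$--$1$ matrices $A$ and pairs consisting of an SSYT $Q\in\SSYT_\la(n)$ and a dual SSYT $\wh P\in\SSYT_\la^*(m)$, with $\x^{Q}\y^{\wh P}=\prod_{i,j}(x_iy_j)^{A_{ij}}$. Conjugating $\wh P$ together with its entries turns it, weight unchanged, into an element of $\SSYT_{\la'}(m)$, via the bijection between $\SSYT_\la^*(m)$ and $\SSYT_{\la'}(m)$ recorded after the definition of dual SSYTs; hence the right-hand side equals $\sum_\la s_\la(\x)\,s_{\la'}(\y)$, as claimed.

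The real content of either argument is the construction of (dual) RSK together with the verification that it preserves weights. In the operator language of Section~\ref{sec:up and downs} this is condensed into a single commutation relation --- $D_yU_x=(1-xy)^{-1}U_xD_y$ for \eqref{eq:Cauchy} and $D_y^{*}U_x=(1+xy)\,U_xD_y^{*}$ for \eqref{eq:dual Cauchy} --- from which, using $U_{x_n}\cdots U_{x_1}\emptyset=\sum_\la s_\la(\x)\,\la$, $\langle D_{y_1}\cdots D_{y_m}\la,\emptyset\rangle=s_\la(\y)$, $\langle D^{*}_{y_1}\cdots D^{*}_{y_m}\la,\emptyset\rangle=s_{\la'}(\y)$, and $D_{y_j}\emptyset=D^{*}_{y_j}\emptyset=\emptyset$, both identities drop out by commuting every down operator past every up operator in $\langle D_{y_1}\cdots D_{y_m}U_{x_n}\cdots U_{x_1}\emptyset,\emptyset\rangle$ (respectively with $D^{*}$ in place of $D$). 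I expect that commutation relation --- equivalently the local growth rule --- to be the only genuine obstacle; establishing it is exactly the classical row/column insertion step, and no convergence issue intervenes since, once the total degree is fixed, each side of \eqref{eq:Cauchy} and \eqref{eq:dual Cauchy} is a finite sum.
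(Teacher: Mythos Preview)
Your proposal is correct, and the operator-theoretic sketch in your final paragraph is precisely the route the paper takes: it states the commutation relations $D_yU_x=(1-xy)^{-1}U_xD_y$ and $D_y^*U_x=(1+xy)U_xD_y^*$ (Theorem~\ref{thm:commutation relations}), expresses $s_\la(\x)$, $s_\la(\y)$, $s_{\la'}(\y)$ via up and (dual) down operators, and then commutes all down operators past all up operators in $\langle D_{y_m}\cdots D_{y_1}U_{x_n}\cdots U_{x_1}\emptyset,\emptyset\rangle$ to obtain the product side; the commutation relations themselves are proved bijectively via row/column insertion.

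Your primary argument, however, runs in the opposite direction to the paper's logic: you invoke RSK and dual RSK as pre-existing bijections and read off the identities, whereas the paper's whole point is to \emph{derive} RSK-type correspondences from the local commutation relation via growth diagrams (Section~\ref{sec:growths}). Both are valid proofs of Theorem~\ref{thm:Cauchy identities}; the paper's order of exposition buys a uniform framework in which any bijective proof of the commutation relation (row insertion, column insertion, etc.) automatically yields a global bijection, and later the same mechanism is reused for the Littlewood identities. Your order is quicker if one is willing to quote RSK, but it hides exactly the structure the paper wants to expose.
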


Both identities can be interpreted as identities between two generating functions as follows. The Cauchy identity is equal to
\[
\sum_{\la}\sum_{\substack{P \in \SSYT_\la(n) \\ Q \in \SSYT_\la(m)}} \x^P\y^Q = \sum_{A=(a_{i,j})} \prod_{i,j} (x_iy_j)^{a_{i,j}},
\]
where the left hand side is over all partitions $\la$ and the right hand side is over all $n \times m$ matrices $A$ of non-negative integers. 
The dual Cauchy identity on the other hand is equal to the identity
\[
\sum_{\la}\sum_{\substack{P \in \SSYT_\la(n) \\ Q^* \in \SSYT^*_\la(m)}} = \sum_{B=(b_{i,j})} \prod_{i,j} (x_iy_j)^{b_{i,j}},
\]
where the left hand side is over all partitions $\la$ and the right hand side is over all $n \times m$ $\{0,1\}$-matrices. We use this interpretation in Section~\ref{sec:growths} to provide a combinatorial proof.

\subsection{Up and (dual) down operators}
\label{sec:up and downs}
In this section we present an elementary algebraic framework which allows us to prove Theorem~\ref{thm:Cauchy identities}. We follow mostly the notations used in \cite{FriedenSchreierAigner24plus}, however there are certain changes since we aim to cover the setting for both RSK and dual RSK in this paper. In particular we add either a ``$*$'' or ``dual'' to notations used in \cite{FriedenSchreierAigner24plus} whenever these are specific for the dual RSK setting.
\\

For two partitions $\mu,\la$ we write $\mu \subseteq \la$ if the Young diagram of $\mu$ is contained in the Young diagram of $\la$. The poset defined by the inclusion relation $\subseteq$ is called \deff{Young's lattice}; its meet and join are $\cap$ and $\cup$, where $\la \cap \mu$ (resp., $\la \cup \mu$) is defined as the partition obtained by taking the intersection (resp., union) of the corresponding Young diagrams.
For $\mu \subseteq \la$ the \deff{skew diagram} $\la/\mu$ is defined as the Young diagram obtained by deleting all boxes of $\la$ which are part of $\mu$. We call the skew diagram $\la/\mu$ a \deff{horizontal strip} (resp., \deff{vertical strip}), denoted by $\mu \prec \la$ (resp., $\mu \prec^\prime \la$), if $\la/\mu$ contains at most one cell in each column (resp. row).
The partitions $\la,\mu$ are called \deff{interlacing} if $\mu \prec \la$.

We denote by $\bb{P}$ the set of partitions and define $\bb{Y}$ as the $\bb{Q}$-vector space generated by partitions, i.e.,
\[
\bb{Y}:=\bigoplus_{\la \in \bb{P}}\bb{Q}\la.
\]
Intuitively, this vector space already covers all combinatorial objects we are interested in for this paper. However to be algebraically precise, we need to actually work over the following more general space. For variables $x_1,\ldots,x_n$ we define the $\bb{Q}\llbracket x_1,\ldots,x_n\rrbracket$-module $\bb{Y}\llbracket x_1,\ldots,x_n \rrbracket$ as the set of formal power series in the variables $x_1,\ldots, x_n$ with coefficients in $\bb{Y}$ together with the usual addition and multiplication. For simplicity, we define for the rest of this paper all linear maps over the smallest reasonable field or algebra respectively and extend them linearly in the obvious way if necessary.\bigskip

The \deff{up operator} $U_x$, the \deff{down operator} $D_y$ and the \deff{dual down operator} $D_y^*$ are $\bb{Q}\llbracket x,y \rrbracket$-linear maps on $\bb{Y}\lb x,y \rb$ which are defined as 
\[
U_x \la = \sum_{\nu \succ \la} x^{|\nu/\la|} \nu, \qquad
D_y \la = \sum_{\mu \prec \la} y^{|\la/\mu|} \mu, \qquad
D_y^* \la=\sum_{\mu \prec^\prime \la} y^{|\la/\mu|} \mu,
\]
for a partition $\la$ and linearly extended onto $\bb{Y}\lb x,y \rb$.

A crucial property of these operators are the following commutation relations.
\begin{thm}
\label{thm:commutation relations}
The up and (dual) down operator satisfy the commutation relations
\begin{align}
\label{eq:commutation relation} D_y U_x = \frac{1}{1-xy} U_x D_y ,\\
\label{eq:dual commutation relation} D_y^* U_x = (1+x y) U_x D_y^*.
\end{align}
\end{thm}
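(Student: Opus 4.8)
The plan is to prove both relations combinatorially, by turning each into an explicit weight-preserving bijection; these bijections are the (dual) row and column insertions presented in this section. By $\bb{Q}\lb x,y\rb$-linearity it suffices to verify each relation on a single partition $\la$, and then, comparing the coefficients of a partition $\rho$ on both sides, relation \eqref{eq:commutation relation} is equivalent to the formal power series identity
\begin{equation*}
\sum_{\la\prec\nu\succ\rho} x^{|\nu/\la|}y^{|\nu/\rho|}
=\frac{1}{1-xy}\sum_{\la\succ\kappa\prec\rho} x^{|\rho/\kappa|}y^{|\la/\kappa|},
\tag{$\ast$}
\end{equation*}
required for all $\la,\rho$; relation \eqref{eq:dual commutation relation} reduces in the same way to the statement obtained from $(\ast)$ by requiring $\nu/\rho$ and $\la/\kappa$ to be vertical strips (i.e.\ $\rho\prec^\prime\nu$, $\kappa\prec^\prime\la$) and by replacing the prefactor $\tfrac1{1-xy}$ with $1+xy$. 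Writing $\tfrac1{1-xy}=\sum_{k\ge0}(xy)^k$, identity $(\ast)$ says precisely that there is a bijection between $\{\nu\colon\la\prec\nu\succ\rho\}$ and $\{(\kappa,k)\colon\la\succ\kappa\prec\rho,\ k\in\bb{Z}_{\ge0}\}$ carrying $x^{|\nu/\la|}y^{|\nu/\rho|}$ to $x^{|\rho/\kappa|+k}y^{|\la/\kappa|+k}$; so it suffices to exhibit one such bijection (with $k\in\{0,1\}$ in the dual case).

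For \eqref{eq:commutation relation} I would build this bijection from the ``row form'' of the interlacing conditions: $\la\prec\nu$ and $\rho\prec\nu$ hold together exactly when $\max(\la_i,\rho_i)\le\nu_i\le\min(\la_{i-1},\rho_{i-1})$ for all $i\ge1$ (with $\la_0=\rho_0=\infty$), and $\kappa\prec\la$ and $\kappa\prec\rho$ hold together exactly when $\max(\la_{i+1},\rho_{i+1})\le\kappa_i\le\min(\la_i,\rho_i)$. In particular both sides of $(\ast)$ vanish unless $\la_i\le\rho_{i-1}$ and $\rho_i\le\la_{i-1}$ for every $i$, in which case $(\ast)$ is $0=0$. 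Writing $\nu_i=\max(\la_i,\rho_i)+e_i$ and $\kappa_i=\min(\la_i,\rho_i)-d_i$ with $e_i,d_i\ge0$, a short computation factors the common monomial $\prod_i x^{\max(\rho_i-\la_i,0)}y^{\max(\la_i-\rho_i,0)}$ out of both weights and reduces the rest to $(xy)^{\sum_i e_i}$ on the left and $(xy)^{k+\sum_i d_i}$ on the right. Since $e_{i+1}$ ranges over the same interval $[0,\min(\la_i,\rho_i)-\max(\la_{i+1},\rho_{i+1})]$ as $d_i$, while $e_1$ and $k$ each range over $\bb{Z}_{\ge0}$, the bijection is simply the index shift $k:=e_1$, $d_i:=e_{i+1}$, which manifestly preserves $\sum_i e_i=k+\sum_i d_i$; this proves $(\ast)$. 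Unwinding the substitutions, $\nu$ is obtained from $\kappa$ by pushing a horizontal strip through its rows with the overflow out of the first row stored in $k$ — this is the row-insertion local rule of Fomin's growth diagrams; the column-insertion (Burge) rule is a second, genuinely different bijective proof of $(\ast)$, and since the commutations inside a growth diagram may be carried out in many orders this machinery ultimately yields an infinite family of bijections for the Cauchy identity \eqref{eq:Cauchy}.

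The dual relation \eqref{eq:dual commutation relation} follows the same template, with vertical strips replacing the relevant horizontal ones: the row form of the conditions now forces $\nu_i-\rho_i\in\{0,1\}$ and $\la_i-\kappa_i\in\{0,1\}$, so the per-row excess is a single bit and the reindexed overflow is at most one box — which is exactly what turns $\tfrac1{1-xy}$ into $1+xy$. The one point that genuinely needs care, and where I expect most of the work to lie, is precisely this dual bookkeeping: a vertical strip, unlike a horizontal one, interacts with the weak-decrease condition across consecutive rows, so the bits attached to different rows are not independent within a block of equal parts, and one must check that the reindexing still matches admissible $\nu$'s with admissible pairs $(\kappa,k)$ bijectively, with nothing omitted and nothing double-counted. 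That check is finite and local; everything else is the routine decoupling of the interlacing conditions into single rows together with the elementary geometric-series manipulation used above.
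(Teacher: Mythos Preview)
Your argument for the non-dual relation \eqref{eq:commutation relation} is correct and is precisely the paper's row-insertion local rule, written in row coordinates rather than in the paper's ribbon language: the paper parametrises $\mc D(\la,\rho,\cdot)$ and $\mc U(\la,\rho,\cdot)$ by multisets on the removable and addable horizontal ribbons of $\la\cap\rho$ and $\la\cup\rho$ and then shifts the position index by one, which is exactly your map $k:=e_1$, $d_i:=e_{i+1}$. So for \eqref{eq:commutation relation} your proof and the paper's coincide.

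For the dual relation your sketch has a genuine gap, and it is not the one you flagged. The dependence you worry about does \emph{not} occur: once $\la\prec\nu$ is imposed, $\nu_{i+1}\le\la_i\le\nu_i$ forces $\nu$ to be a partition, so the bit $\nu_i-\rho_i\in\{0,1\}$ in each row is constrained only by the per-row inequalities, and similarly for $\kappa$. The real obstruction is that the free rows do \emph{not} match under the index shift. A row $i$ is free for $\nu$ exactly when $\la_i\le\rho_i<\la_{i-1}$, whereas row $i$ is free for $\kappa$ exactly when $\rho_{i+1}<\la_i\le\rho_i$; these two conditions are not equivalent after $i\mapsto i+1$ (take $\la=(2)$, $\rho=(1)$: the unique free row for $\nu$ is $i=2$, but there is no free row for $\kappa$, so your proposed assignment $k:=e_1$ cannot supply the required extra bit). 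What one actually needs is the statement that there is always exactly one more free position for $\nu$ than for $\kappa$; in the paper this is the count of \emph{dual addable outer corners} of $\la\cup\rho$ versus \emph{dual removable inner corners} of $\la\cap\rho$, and only after establishing that equality does one define the bijection $\F^{\row*}$ (or $\F^{\col*}$). Your index-shift argument does not supply this count, so the dual case is not yet proved.
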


Before proving this theorem, we use it to deduce the (dual) Cauchy identity.
First observe that each SSYT $T$ of shape $\la$ with entries at most $n$ can be interpreted as a chain of interlacing partitions
\[
\emptyset = T^{(0)} \prec T^{(1)} \prec \cdots \prec T^{(n)} = \la,
\]
where $T^{(i)}$ denotes the shape of the tableau $T$ when restricted to the entries at most $i$. Analogously a dual SSYT $R$ of shape $\la$ with entries at most $m$ can be interpreted as a chain of dual interlacing partitions
\[
\emptyset = R^{(0)} \prec^\prime R^{(1)} \prec^\prime \cdots \prec^\prime R^{(m)} = \la.
\]
We define the inner product $\langle \cdot , \cdot \rangle$ on the vector space $\bb{Y}$ as $\langle \la ,\rho \rangle = \delta_{\la,\rho}$ for all $\la,\rho \in \bb{P}$. For each $\rho \in \bb{P}$ we extend the inner product to a linear functional $\left\langle \cdot,\rho \right\rangle$ on $\bb{Y} \lb x_1,\ldots,x_n,y_1,\ldots,y_m\rb$ linearly. The above observations imply the following alternative formulas for the Schur polynomials $s_\la(\x), s_\la(\y)$ and $s_{\la^\prime}(\y)$
\begin{align}
\label{eq:schur as ups}
s_\la(\x) &= \left\langle U_{x_n} \cdots U_{x_1} \emptyset , \la \right\rangle, \\
\label{eq:schur as downs}
s_\la(\y) &= \left\langle D_{y_1} \cdots D_{y_m} \la , \emptyset \right\rangle, \\
\label{eq:schur as dual downs}
s_{\la^\prime}(\y) &= \left\langle D_{y_1}^* \cdots D_{y_m}^* \la , \emptyset \right\rangle,
\end{align}
where $\x=(x_1,\ldots,x_n)$ and $\y=(y_1,\ldots,y_m)$.
We calculate the following linear functional in two different ways; first as shown next
\begin{equation}
\label{eq:cauchy operator inner product}
\left\langle D_{y_m} \cdots D_{y_1} U_{x_n} \cdots U_{x_1} \emptyset , \emptyset \right\rangle
= \sum_\la \left\langle U_{x_n} \cdots U_{x_1} \emptyset , \la \right\rangle
  \left\langle D_{y_m} \cdots D_{y_1} \la , \emptyset \right\rangle
= \sum_\la s_\la(\x) s_\la(\y).
\end{equation}
By using the commutation relation \eqref{eq:commutation relation} repeatedly we obtain
\[
 D_{y_j} U_{x_n} \cdots U_{x_1} =
\frac{1}{1-x_n y_j} U_{x_n} D_{y_j} U_{x_{n-1}} \cdots U_{x_1}
= \prod_{i=1}^n \frac{1}{1-x_iy_j} U_{x_n} \cdots U_{x_1} D_{y_j}.
\]
Since $D_{y_j}\emptyset = \emptyset$ we obtain by using the above equation repeatedly
\[
\left\langle D_{y_m} \cdots D_{y_1} U_{x_n} \cdots U_{x_1} \emptyset,\emptyset \right\rangle = 
\prod_{\substack{1 \leq i \leq n \\ 1 \leq j \leq m}} \frac{1}{1-x_i y_j}
\left\langle U_{x_n} \cdots U_{x_1} \emptyset,\emptyset \right\rangle = 
\prod_{\substack{1 \leq i \leq n \\ 1 \leq j \leq m}} \frac{1}{1-x_i y_j},
\]
which proves together with \eqref{eq:cauchy operator inner product} the Cauchy identity \eqref{eq:Cauchy}. 
 In order to prove the dual Cauchy identity \eqref{eq:dual Cauchy} we regard the linear functional
$\left\langle D_{y_m}^* \cdots D_{y_1}^* U_{x_n} \cdots U_{x_1} \emptyset , \emptyset \right\rangle$
analogously.\bigskip

In the remainder of this section we provide a combinatorial proof of Theorem~\ref{thm:commutation relations}. For partitions $\la,\rho$ and a non-negative integer $k$ we define the sets
\begin{align*}
\mc{U}(\la,\rho,k) &:= \{ \nu: \la \prec \nu \succ \rho, |\nu /(\la \cup \rho)| = k\}, \\
\mc{D}(\la,\rho,k) &:= \{ \mu: \la \succ \mu \prec \rho, |(\la \cap \rho)/\mu| = k\}, \\
\mc{U}^*(\la,\rho,k) &:= \{ \nu: \la \prec^\prime \nu \succ \rho, |\nu /(\la \cup \rho)| = k\}, \\
\mc{D}^*(\la,\rho,k) &:= \{ \mu: \la \succ \mu \prec^\prime \rho, |(\la \cap \rho)/\mu| = k\}.
\end{align*}
Then \eqref{eq:commutation relation} is equivalent to the system of equations
\begin{equation}
\label{eq:commutator via sets}
\left| \mc{U}(\la,\rho,k) \right| = \sum_{i=0}^k \left| \mc{D}(\la,\rho,i) \right|,
\end{equation}
for all partitions $\la,\rho$ and non-negative integers $k$, and \eqref{eq:dual commutation relation} is equivalent to the system of equations 
\begin{equation}
\label{eq:dual commutator via sets}
\left| \mc{U}^*(\la,\rho,k) \right| = \left| \mc{D}^*(\la,\rho,k) \right| + \left| \mc{D}^*(\la,\rho,k-1) \right|,
\end{equation}
for all partitions $\la,\rho$ and non-negative integers $k$. 
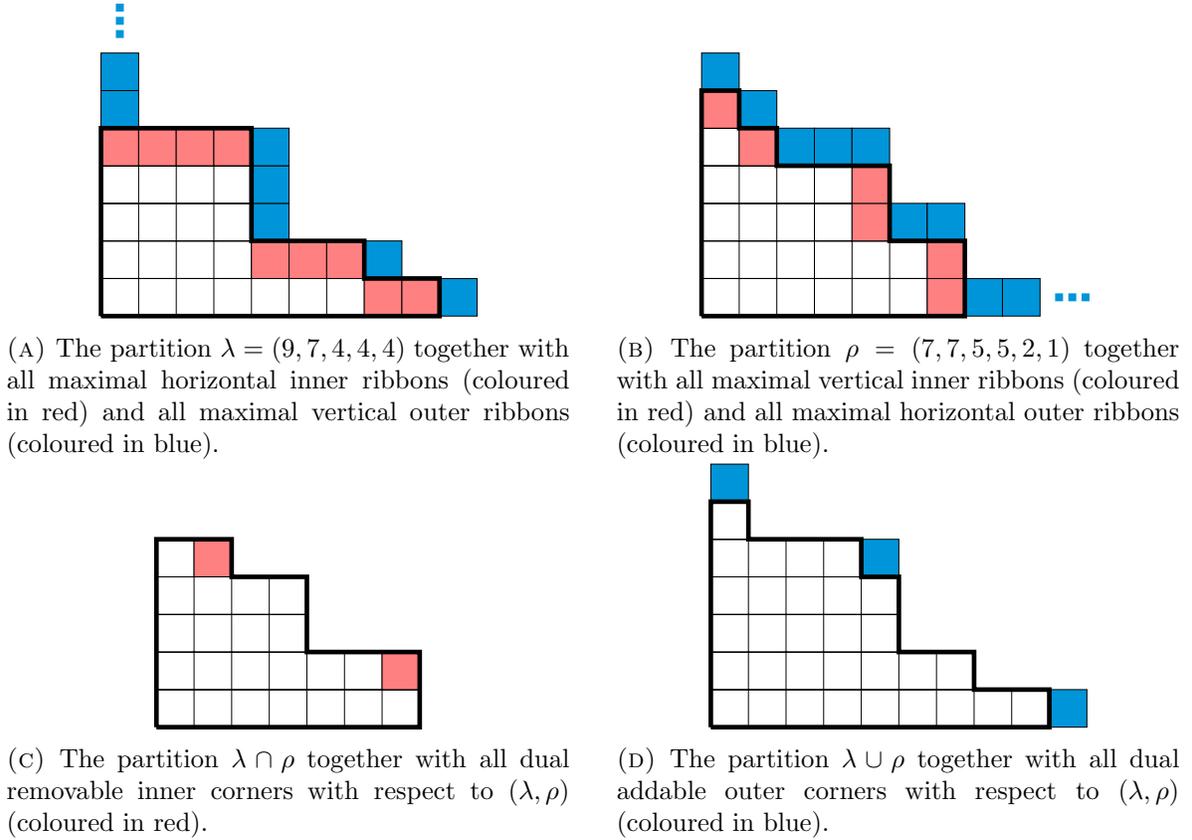
\begin{figure}
\begin{center}
\begin{subfigure}[t]{.48\textwidth}
\centering
\begin{tikzpicture}
\tyng(0,0,9,7,4,4,4)
\Yred
\tgyoung(0cm,0cm,:::::::;;,::::;;;,,,;;;;)
\Ycyan
\tgyoung(0cm,0cm,:::::::::;,:::::::;,::::;,::::;,::::;,;,;)
\draw[line width=1.75pt] (0,0) -- (4.5,0) -- (4.5,.5) -- (3.5,.5) -- (3.5,1) -- (2,1) -- (2,2.5) -- (0,2.5) -- (0,0);
\draw[line width=3pt, dotted, myblue] (.25,3.7) -- (.25,4.2);
\end{tikzpicture}
\caption{The partition $\la=(9,7,4,4,4)$ together with all maximal horizontal inner ribbons (coloured in red) and all maximal vertical outer ribbons (coloured in blue).}
\end{subfigure} \hfill
\begin{subfigure}[t]{.48\textwidth}
\centering
\begin{tikzpicture}
\tyng(0,0,7,7,5,5,2,1)
\Yred
\tgyoung(0cm,0cm,::::::;,::::::;,::::;,::::;,:;,;)
\Ycyan
\tgyoung(0cm,0cm,:::::::;;,,:::::;;,,::;;;,:;,;)
\draw[line width=1.75pt] (0,0) -- (3.5,0) -- (3.5,1) -- (2.5,1) -- (2.5,2) -- (1,2) -- (1,2.5) -- (.5,2.5) -- (.5,3) -- (0,3) -- (0,0);
\draw[line width=3pt, dotted, myblue] (4.7,.25) -- (5.2,.25);
\end{tikzpicture}
\caption{The partition $\rho=(7,7,5,5,2,1)$ together with all maximal vertical inner ribbons (coloured in red) and all maximal horizontal outer ribbons (coloured in blue).}
\end{subfigure}
\begin{subfigure}[t]{.48\textwidth}
\centering
\begin{tikzpicture}
\tyng(0,0,7,7,4,4,2)
\Yred
\tgyoung(0cm,0cm,,::::::;,,,:;)
\draw[line width=1.75pt] (0,0) -- (3.5,0) -- (3.5,1) -- (2,1) -- (2,2) -- (1,2) -- (1,2.5) -- (0,2.5) -- (0,0);
\end{tikzpicture}
\caption{The partition $\la \cap \rho$ together with all dual removable inner corners with respect to $(\la,\rho)$ (coloured in red).}
\end{subfigure} \hfill
\begin{subfigure}[t]{.48\textwidth}
\centering
\begin{tikzpicture}
\tyng(0,0,9,7,5,5,4,1)
\Ycyan
\tgyoung(0cm,0cm,:::::::::;,,,,::::;,,;)
\draw[line width=1.75pt] (0,0) -- (4.5,0) -- (4.5,.5) -- (3.5,.5) -- (3.5,1) -- (2.5,1) -- (2.5,2) -- (2,2) -- (2,2.5) -- (.5,2.5) -- (.5,3) -- (0,3) -- (0,0);
\end{tikzpicture}
\caption{The partition $\la \cup \rho$ together with all dual addable outer corners  with respect to $(\la,\rho)$ (coloured in blue).}
\end{subfigure}

\caption{\label{fig:dual removable and addable} The stepwise construction of the removable inner corners of $\la\cap \rho$ and addable outer corners of $\la \cup \rho$.
}
\end{center}
\end{figure}
In order to prove both systems of equations combinatorially, we need to introduce further notations.
\begin{itemize}
\item A \deff{horizontally inner ribbon} of $\la$ is set of cells $\{(i,j),(i+1,j),\ldots,(i+k,j)\}=\la/\mu$ such that $\mu \prec \la$.
\item A \deff{vertically inner ribbon} of $\la$ is set of cells $\{(i,j),(i,j+1),\ldots,(i,j+k)\}=\la/\mu$ such that $\mu \prec^\prime \la$.
\item A \deff{horizontally outer ribbon} of $\la$ is set of cells $\{(i,j),(i+1,j),\ldots,(i+k,j)\}=\nu/\la$ such that $\la \prec \nu$.
\item A \deff{vertically outer ribbon} of $\la$ is set of cells $\{(i,j),(i,j+1),\ldots,(i,j+k)\}=\nu/\la$ such that $\la \prec^\prime \nu$.
\end{itemize}
Note that the above definitions generalise the notions of inner and outer corners. An \deff{inner corner} is an inner ribbon that is both horizontally and vertically, and analogously an \deff{outer corner} is an outer ribbon that is both horizontally and vertically.
We call a horizontal inner ribbon of $\la \cap \rho$ \deff{removable} with respect to $(\la,\rho)$ if it is the intersection of a horizontal inner ribbon of $\la$ and a horizontal inner ribbon of $\rho$. Analogously a horizontal outer ribbon of $\la \cup \rho$ is called \deff{addable} with respect to $(\la,\rho)$ if it is the intersection of a horizontal outer ribbon of $\la$ and a horizontal outer ribbon of $\rho$. For the dual RSK setting, we call an inner corner of $\la \cap \rho$ \deff{dual removable} with respect to $(\la,\rho)$ if it is the intersection of a horizontal inner ribbon of $\la$ and a vertical inner ribbon of $\rho$, and analogously we call an outer corner of $\la \cup \rho$ \deff{dual addable} with respect to $(\la,\rho)$ if it is the intersection of vertical outer ribbon of $\la$ and a horizontal outer ribbon of $\rho$. See Figure~\ref{fig:dual removable and addable} for an example of dual addable and dual removable corners and Figure~\ref{fig:row and column insertion} for an example of addable and removable ribbons. In all of the above settings we omit referring to $(\la,\rho)$ whenever they are clear from context.

Let $\la, \rho$ be two partitions and denote by $d$ the number of removable ribbons of $\la \cap \rho$ of size $1$, i.e. $d$ is the size of $\mc{D}(\la,\rho,1)$. We say that a removable ribbon of $\la \cap \rho$ is \deff{in position $i$} if it is in the same row as the $i$-th element of $\mc{D}(\la,\rho,1)$ when counted from bottom starting with $1$. Analogously we say that an addable ribbon of $\la\cup \rho$ is in position $i$ if in the same row as the $i$-th element of $\mc{U}(\la,\rho,1)$ counted from bottom and starting with $0$.

We identify elements of $\mu \in \mc{D}(\la,\rho,k)$ and $\nu \in \mc{U}(\la,\rho,k)$ with multisets $\Ret(\mu)$ of $[d]=\{1,2,\ldots,d\}$ or $\Set(\nu)$ of $[0,d]=\{0,1,\ldots,d\}$ respectively as follows.
Given an element $\mu \in \mc{D}(\la,\rho,k)$ we can decompose $(\la \cap \rho)/\mu$ into a set $\{R_{i_1},\ldots,R_{i_l}\}$ of removable ribbons of $\la \cap \rho$ such that the $R_{i_j}$ are pairwise different, $R_{i_j}$ is in position $i_j$, and that 
\[
(\la \cap \rho) / \mu = \bigcup_{i=1}^l R_i.
\]
We define the multiset  $\Ret(\mu)$ as the multiset that contains the element $i_j$ with multiplicity $|R_{i_j}|$ for $1 \leq j \leq l$.
The construction is analogue for $\Set(\nu)$. We can apply the same constructions for the elements of $\mu^* \in \mc{D}^*(\la,\rho,k)$ and $\nu^* \in \mc{U}^*(\la,\rho,k)$ with the only difference that $\Ret(\mu^*)$ is a $k$-subset of $[d]$ and that $\Set(\nu^*)$ is a $k$-subset of $[0,d]$.
In the following we construct for each $\la,\rho, k$ the natural bijections $\F_{\la,\rho,k}^\row, \F_{\la,\rho,k}^\col, \F_{\la,\rho,k}^{\row*}, \F_{\la,\rho,k}^{\col*}$,
\begin{align*}
\F_{\la,\rho,k}^{\bullet}: \bigcup_{i=0}^k \mc{D}(\la,\rho,i) \rightarrow \mc{U}(\la,\rho,k), \qquad
\F_{\la,\rho,k}^{\bullet*}:  \mc{D}^*(\la,\rho,k) \cup \mc{D}^*(\la,\rho,k-1) \rightarrow \mc{U}^*(\la,\rho,k),
\end{align*}
where $\bullet$ can be either $\row$ or $\col$ and hence prove Theorem~\ref{thm:commutation relations} bijectively.\\

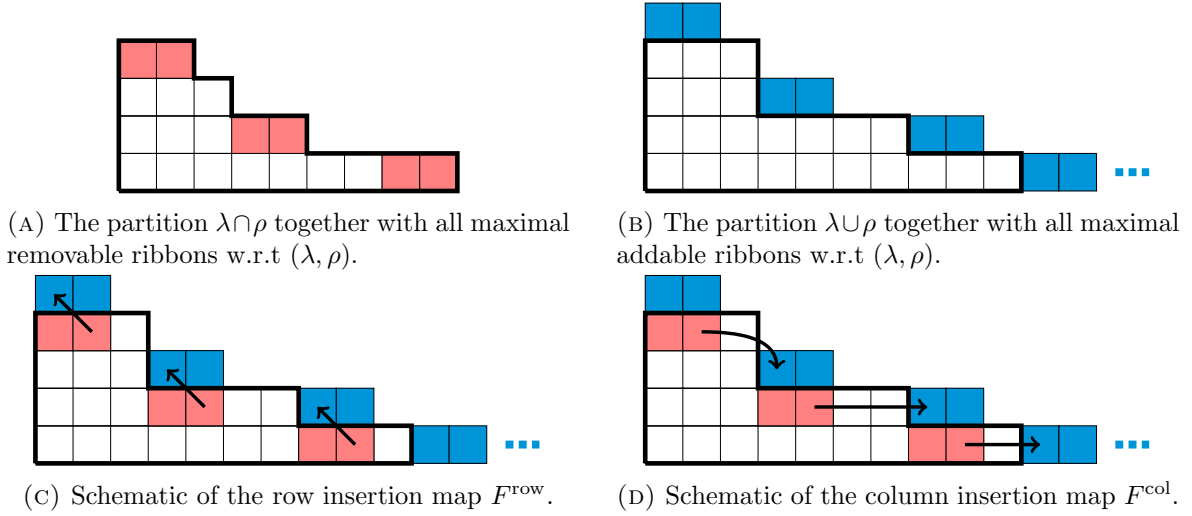
\begin{figure}
\begin{center}
\begin{subfigure}[t]{.48\textwidth}
\centering
\begin{tikzpicture}
\tyng(0,0,9,5,3,2)
\Yred
\tgyoung(0cm,0cm,:::::::;;,:::;;,,;;)
\draw[line width=1.75pt] (0,0) -- (4.5,0) -- (4.5,.5) -- (2.5,.5) -- (2.5,1) -- (1.5,1) -- (1.5,1.5) -- (1,1.5) -- (1,1.5) -- (1,2) -- (0,2) -- (0,0);
\end{tikzpicture}
\caption{The partition $\la \cap \rho$ together with all maximal removable ribbons w.r.t $(\la,\rho)$.}
\end{subfigure} \hfill
\begin{subfigure}[t]{.48\textwidth}
\centering
\begin{tikzpicture}
\tyng(0,0,10,7,3,3)
\Ycyan
\tgyoung(0cm,0cm,::::::::::;;,:::::::;;,:::;;,,;;)
\draw[line width=1.75pt] (0,0) -- (5,0) -- (5,.5) -- (3.5,.5) -- (3.5,1) -- (1.5,1) -- (1.5,2) -- (0,2) -- (0,0);
\draw[line width=3pt, dotted, myblue] (6.25,.25) -- (6.7,.25);
\end{tikzpicture}
\caption{The partition $\la \cup \rho$ together with all maximal addable ribbons w.r.t $(\la,\rho)$.}
\end{subfigure}
\begin{subfigure}[t]{.48\textwidth}
\centering
\begin{tikzpicture}
\tyng(0,0,10,7,3,3)
\Ycyan
\tgyoung(0cm,0cm,::::::::::;;,:::::::;;,:::;;,,;;)
\Yred
\tgyoung(0cm,0cm,:::::::;;,:::;;,,;;)
\draw[line width=3pt, dotted, myblue] (6.25,.25) -- (6.7,.25);
\draw[line width=1.75pt] (0,0) -- (5,0) -- (5,.5) -- (3.5,.5) -- (3.5,1) -- (1.5,1) -- (1.5,2) -- (0,2) -- (0,0);

\draw [->,line width=1.3pt] (4.25,.25) -- (3.75,.75);
\draw [->,line width=1.3pt] (2.25,.75) -- (1.75,1.25);
\draw [->,line width=1.3pt] (.75,1.75) -- (.25,2.25);

\end{tikzpicture}
\caption{Schematic of the row insertion map $F^\row$.}
\end{subfigure} \hfill
\begin{subfigure}[t]{.48\textwidth}
\centering
\begin{tikzpicture}
\tyng(0,0,10,7,3,3)
\Ycyan
\tgyoung(0cm,0cm,::::::::::;;,:::::::;;,:::;;,,;;)
\Yred
\tgyoung(0cm,0cm,:::::::;;,:::;;,,;;)
\draw[line width=3pt, dotted, myblue] (6.25,.25) -- (6.7,.25);
\draw[line width=1.75pt] (0,0) -- (5,0) -- (5,.5) -- (3.5,.5) -- (3.5,1) -- (1.5,1) -- (1.5,2) -- (0,2) -- (0,0);

\draw [->,line width=1.3pt] (4.25,.25) -- (5.25,.25);
\draw [->,line width=1.3pt] (2.25,.75) -- (3.75,.75);
\draw [->,line width=1.3pt] (.75,1.75) to [out=0,in=90] (1.75,1.25);
\end{tikzpicture}
\caption{Schematic of the column insertion map $F^\col$.}
\end{subfigure}
\caption{\label{fig:row and column insertion} The inner and outer horizontal ribbons of $\la \cap \rho$ or $\la \cup \rho$ respectively and the schematics of the row and column insertions for $\la=(10,5,3,2)$ and $\rho=(9,7,3,3)$.}
\end{center}
\end{figure}

The bijection $\F_{\la,\rho,k}^\row$, called \deff{row insertion} maps a multiset $\Ret \in \mc{D}(\la,\rho,j)$ to the set $\Ret \cup \{ 0^{(k-j)}\}$. The definition of the \deff{column insertion}\footnote{Note that column insertion is sometimes also called \deff{Burge insertion}, compare for example to {\cite[§3.2]{vanLeeuwen05}}.} $\F_{\la,\rho,k}^\col$ is slightly more complicated. Denote by $\Set^\infty$ the union of all addable ribbons of $\la \cup \rho$. In order to obtain $\F_{\la,\rho,k}^\col(\Ret)$ we construct a sequence $(A_i,B_i)$ of pairs of multisets as follows. For $\Ret \in \mc{D}(\la,\rho,j)$ start with the pair of multisets $(A_0,B_0)=(\emptyset,\Set^\infty \setminus \Ret)$ and denote by $x_1 \leq \cdots \leq x_k$ the elements of $\Ret \cup \{\infty^{(k-j)}\}$. In the $i$-th step set $(A_i,B_i)=(A_{i-1}\cup\{y\},B_{i-1}\setminus\{y\})$ where $y$ is the largest element in $B_{i-1}$ which is smaller than $x_i$. We define $\F_{\la,\rho,k}^\col(\Ret)$ as the terminal multiset $A_k$. For an alternative description of column insertion see for example \cite[§3.2]{vanLeeuwen05}. Both maps are illustrated in Figure~\ref{fig:row and column insertion}.

\begin{ex}
The following table shows the row and column insertion maps for $\la=\rho=(3,2)$ and $k=2$. The top row contains the elements $\mu \in \bigcup_{i=0}^2 \mc{D}(\la,\la,i)$, where $\mu$ is obtained by removing the red boxes. The two bottom rows contain the corresponding images $\nu \in \mc{U}(\la,\la,2)$ where each $\nu$ is obtained by adding the blue boxes to $\la$.
\begin{center}
\Yboxdim{0.4cm}
\begin{tabular}{c|ccccccccc}
& \YoungRed{3,2}{} && \YoungRed{3,2}{::;} && \YoungRed{3,2}{,:;} && \YoungRed{3,2}{::;,:;} && \YoungRed{3,2}{,;;} \\[6pt] \hline \\[-6pt]
$\F_{\la,\rho,k}^\row$ & \YoungBlue{3,2}{:::;;} && \YoungBlue{3,2}{:::;,::;} && \YoungBlue{3,2}{:::;,,;} && \YoungBlue{3,2}{,::;,;} && \YoungBlue{3,2}{,,;;}  \\[6pt] \hline \\[-6pt]
$\F_{\la,\rho,k}^\col$ &  \YoungBlue{3,2}{,,;;} && \YoungBlue{3,2}{:::;,,;} && \YoungBlue{3,2}{,::;,;} && \YoungBlue{3,2}{:::;;} && \YoungBlue{3,2}{:::;,::;} 
\end{tabular}
\Yboxdim{0.5cm}
\end{center}
\end{ex}

For the dual setting the definitions become simpler since we have sets instead of multisets. The bijections $\F_{\la,\rho,k}^{\row*}$, called \deff{dual row insertion}, and $\F_{\la,\rho,k}^{\col*}$, called \deff{dual column insertion}, are defined as
\[
\F_{\la,\rho,k}^{\row*}(\Ret) = \begin{cases}
\Ret \qquad & |\Ret|=k,\\
\Ret \cup\{0\} & |\Ret|=k-1,
\end{cases} \qquad
\F_{\la,\rho,k}^{\col*}(\Ret) = \begin{cases}
\{x-1:x\in \Ret\} \quad & |\Ret|=k,\\
\{x-1:x\in \Ret\} \cup\{d\} & |\Ret|=k-1,
\end{cases}
\]

\subsection{Growth diagrams}
\label{sec:growths}
In this subsection we define Fomin's growth diagrams \cite{Fomin86, Fomin95} which allow us to transform a bijective proof of the commutation relation into a bijective proof of the (dual) Cauchy identity. 
In the following we view an $n\times m$ matrix as an $n\times m$ grid of squares for which we label the vertices of the grid by partitions. The coordinates of the squares lie in $[n] \times [m]$, and the coordinates of the vertices in $[0,n] \times [0,m]$. In both cases these coordinates are interpreted as matrix coordinates instead of Cartesian coordinates.

\begin{defi}
\label{def:growth}
\begin{enumerate}
\item Let $A$ be an $n \times m$ matrix with non-negative entries. A \deff{growth associated with $A$} is an assignment $\Lambda=(\Lambda_{i,j})$ of partitions to the vertices $(i,j)\in [0,n]\times[0,m]$ such that
\begin{itemize}
\item for $i<n$ holds $\Lambda_{i,j} \prec \Lambda_{i+1,j}$, i.e., $\Lambda_{i+1,j}/\Lambda_{i,j}$ is a horizontal strip,
\item for $j<m$ holds $\Lambda_{i,j} \prec \Lambda_{i,j+1}$, i.e., $\Lambda_{i,j+1}/\Lambda_{i,j}$ is a horizontal strip,
\item $|\Lambda_{i,j}|$ is equal to the sum of entries in $A$ to the north-west of $(i,j)$, that is,
\begin{equation}
\label{eq:growth sizes}
|\Lambda_{i,j}| = \sum_{\substack{1 \leq k \leq i \\ 1 \leq l \leq j}}A_{k,l}.
\end{equation}
\end{itemize}

\item Let $B$ be an $n \times m$ $\{0,1\}$-matrix. A \deff{dual growth associated with $B$} is an assignment $\Lambda=(\Lambda_{i,j})$ of partitions to the vertices $(i,j)\in [0,n]\times[0,m]$ such that
\begin{itemize}
\item for $i<n$ holds $\Lambda_{i,j} \prec \Lambda_{i+1,j}$, i.e., $\Lambda_{i+1,j}/\Lambda_{i,j}$ is a horizontal strip,
\item for $j<m$ holds $\Lambda_{i,j} \prec^\prime \Lambda_{i,j+1}$, i.e., $\Lambda_{i,j+1}/\Lambda_{i,j}$ is a vertical strip,
\item $|\Lambda_{i,j}|$ is equal to the sum of entries in $B$ to the north-west of $(i,j)$, that is,
\[
|\Lambda_{i,j}| = \sum_{\substack{1 \leq k \leq i \\ 1 \leq l \leq j}}B_{k,l}.
\]
\end{itemize}
\end{enumerate}
\end{defi}
Since the matrices $A$ or $B$ respectively are determined by $\Lambda$ we will often omit the reference to it and call $\Lambda$ a \deff{growth} or \deff{dual growth} respectively.

\begin{ex}
\label{ex:growths}
The matrix $A= \begin{pmatrix}
0 & 1 \\ 1 & 0 \\ 1 & 1
\end{pmatrix}$ has four growths associated to it as shown next,

\begin{center}
\begin{tikzpicture}[scale=1.2]

	\node at (1.5,-0.5) {$1$};
	\node at (0.5,-1.5) {$1$};
	\node at (0.5,-2.5) {$1$};
	\node at (1.5,-2.5) {$1$};
	{\footnotesize
		\node at (0,0) {$\emptyset$};
		\node at (1,0) {$\emptyset$};
		\node at (2,0) {$\emptyset$};
		\node at (0,-1) {$\emptyset$};
		\node at (0,-2) {$\emptyset$};
		\node at (0,-3) {$\emptyset$};
		\node at (1,-1) {$\emptyset$};
	\Yboxdim{0.25cm}
		\node at (1,-2) {\yng(1)};
		\node at (1,-3) {\yng(2)};
		\node at (2,-1) {\yng(1)};
		\node at (2,-2) {\yng(1,1)};
		\node at (2,-3) {\yng(3,1)};
	}
	\draw (.2,0) -- (.8,0);
	\draw (.2,-1) -- (.8,-1);
	\draw (.2,-2) -- (.8,-2);
	\draw (.2,-3) -- (.7,-3);
	\draw (1.2,0) -- (1.8,0);
	\draw (1.2,-1) -- (1.8,-1);
	\draw (1.2,-2) -- (1.8,-2);
	\draw (1.3,-3) -- (1.6,-3);
	
	\draw (0,-.2) -- (0,-.8);
	\draw (0,-1.2) -- (0,-1.8);
	\draw (0,-2.2) -- (0,-2.8);
	\draw (1,-.2) -- (1,-.8);
	\draw (1,-1.2) -- (1,-1.8);
	\draw (1,-2.2) -- (1,-2.8);
	\draw (2,-.2) -- (2,-.8);
	\draw (2,-1.2) -- (2,-1.7);
	\draw (2,-2.3) -- (2,-2.7);

	\begin{scope}[xshift=3.25cm]
	\node at (1.5,-0.5) {$1$};
	\node at (0.5,-1.5) {$1$};
	\node at (0.5,-2.5) {$1$};
	\node at (1.5,-2.5) {$1$};
	{\footnotesize
		\node at (0,0) {$\emptyset$};
		\node at (1,0) {$\emptyset$};
		\node at (2,0) {$\emptyset$};
		\node at (0,-1) {$\emptyset$};
		\node at (0,-2) {$\emptyset$};
		\node at (0,-3) {$\emptyset$};
		\node at (1,-1) {$\emptyset$};
	\Yboxdim{0.25cm}
		\node at (1,-2) {\yng(1)};
		\node at (1,-3) {\yng(2)};
		\node at (2,-1) {\yng(1)};
		\node at (2,-2) {\yng(2)};
		\node at (2,-3) {\yng(2,2)};
	}
	\draw (.2,0) -- (.8,0);
	\draw (.2,-1) -- (.8,-1);
	\draw (.2,-2) -- (.8,-2);
	\draw (.2,-3) -- (.7,-3);
	\draw (1.2,0) -- (1.8,0);
	\draw (1.2,-1) -- (1.8,-1);
	\draw (1.2,-2) -- (1.7,-2);
	\draw (1.3,-3) -- (1.7,-3);
	
	\draw (0,-.2) -- (0,-.8);
	\draw (0,-1.2) -- (0,-1.8);
	\draw (0,-2.2) -- (0,-2.8);
	\draw (1,-.2) -- (1,-.8);
	\draw (1,-1.2) -- (1,-1.8);
	\draw (1,-2.2) -- (1,-2.8);
	\draw (2,-.2) -- (2,-.8);
	\draw (2,-1.2) -- (2,-1.8);
	\draw (2,-2.2) -- (2,-2.7);
	\end{scope}

	\begin{scope}[xshift=6.5cm]
	\node at (1.5,-0.5) {$1$};
	\node at (0.5,-1.5) {$1$};
	\node at (0.5,-2.5) {$1$};
	\node at (1.5,-2.5) {$1$};
	{\footnotesize
		\node at (0,0) {$\emptyset$};
		\node at (1,0) {$\emptyset$};
		\node at (2,0) {$\emptyset$};
		\node at (0,-1) {$\emptyset$};
		\node at (0,-2) {$\emptyset$};
		\node at (0,-3) {$\emptyset$};
		\node at (1,-1) {$\emptyset$};
	\Yboxdim{0.25cm}
		\node at (1,-2) {\yng(1)};
		\node at (1,-3) {\yng(2)};
		\node at (2,-1) {\yng(1)};
		\node at (2,-2) {\yng(2)};
		\node at (2,-3) {\yng(3,1)};
	}
	\draw (.2,0) -- (.8,0);
	\draw (.2,-1) -- (.8,-1);
	\draw (.2,-2) -- (.8,-2);
	\draw (.2,-3) -- (.7,-3);
	\draw (1.2,0) -- (1.8,0);
	\draw (1.2,-1) -- (1.8,-1);
	\draw (1.2,-2) -- (1.7,-2);
	\draw (1.3,-3) -- (1.6,-3);
	
	\draw (0,-.2) -- (0,-.8);
	\draw (0,-1.2) -- (0,-1.8);
	\draw (0,-2.2) -- (0,-2.8);
	\draw (1,-.2) -- (1,-.8);
	\draw (1,-1.2) -- (1,-1.8);
	\draw (1,-2.2) -- (1,-2.8);
	\draw (2,-.2) -- (2,-.8);
	\draw (2,-1.2) -- (2,-1.8);
	\draw (2,-2.2) -- (2,-2.7);
	\end{scope}

	\begin{scope}[xshift=9.75cm]
	\node at (1.5,-0.5) {$1$};
	\node at (0.5,-1.5) {$1$};
	\node at (0.5,-2.5) {$1$};
	\node at (1.5,-2.5) {$1$};
	{\footnotesize
		\node at (0,0) {$\emptyset$};
		\node at (1,0) {$\emptyset$};
		\node at (2,0) {$\emptyset$};
		\node at (0,-1) {$\emptyset$};
		\node at (0,-2) {$\emptyset$};
		\node at (0,-3) {$\emptyset$};
		\node at (1,-1) {$\emptyset$};
	\Yboxdim{0.25cm}
		\node at (1,-2) {\yng(1)};
		\node at (1,-3) {\yng(2)};
		\node at (2,-1) {\yng(1)};
		\node at (2,-2) {\yng(2)};
		\node at (2,-3) {\yng(4)};
	}
	\draw (.2,0) -- (.8,0);
	\draw (.2,-1) -- (.8,-1);
	\draw (.2,-2) -- (.8,-2);
	\draw (.2,-3) -- (.7,-3);
	\draw (1.2,0) -- (1.8,0);
	\draw (1.2,-1) -- (1.8,-1);
	\draw (1.2,-2) -- (1.7,-2);
	\draw (1.3,-3) -- (1.5,-3);
	
	\draw (0,-.2) -- (0,-.8);
	\draw (0,-1.2) -- (0,-1.8);
	\draw (0,-2.2) -- (0,-2.8);
	\draw (1,-.2) -- (1,-.8);
	\draw (1,-1.2) -- (1,-1.8);
	\draw (1,-2.2) -- (1,-2.8);
	\draw (2,-.2) -- (2,-.8);
	\draw (2,-1.2) -- (2,-1.8);
	\draw (2,-2.2) -- (2,-2.8);
	\end{scope}
\end{tikzpicture}
\end{center}
and two dual growths associated to it which are depicted below. For readability we omit the $0$ entries in the (dual) growths.
\begin{center}
\begin{tikzpicture}[scale=1.2]

	\node at (1.5,-0.5) {$1$};
	\node at (0.5,-1.5) {$1$};
	\node at (0.5,-2.5) {$1$};
	\node at (1.5,-2.5) {$1$};
	{\footnotesize
		\node at (0,0) {$\emptyset$};
		\node at (1,0) {$\emptyset$};
		\node at (2,0) {$\emptyset$};
		\node at (0,-1) {$\emptyset$};
		\node at (0,-2) {$\emptyset$};
		\node at (0,-3) {$\emptyset$};
		\node at (1,-1) {$\emptyset$};
	\Yboxdim{0.25cm}
		\node at (1,-2) {\yng(1)};
		\node at (1,-3) {\yng(1,1)};
		\node at (2,-1) {\yng(1)};
		\node at (2,-2) {\yng(1,1)};
		\node at (2,-3) {\yng(2,1,1)};
	}
	\draw (.2,0) -- (.8,0);
	\draw (.2,-1) -- (.8,-1);
	\draw (.2,-2) -- (.8,-2);
	\draw (.2,-3) -- (.8,-3);
	\draw (1.2,0) -- (1.8,0);
	\draw (1.2,-1) -- (1.8,-1);
	\draw (1.2,-2) -- (1.8,-2);
	\draw (1.2,-3) -- (1.7,-3);
	
	\draw (0,-.2) -- (0,-.8);
	\draw (0,-1.2) -- (0,-1.8);
	\draw (0,-2.2) -- (0,-2.8);
	\draw (1,-.2) -- (1,-.8);
	\draw (1,-1.2) -- (1,-1.8);
	\draw (1,-2.2) -- (1,-2.7);
	\draw (2,-.2) -- (2,-.8);
	\draw (2,-1.2) -- (2,-1.7);
	\draw (2,-2.3) -- (2,-2.6);

	\begin{scope}[xshift=4cm]
	\node at (1.5,-0.5) {$1$};
	\node at (0.5,-1.5) {$1$};
	\node at (0.5,-2.5) {$1$};
	\node at (1.5,-2.5) {$1$};
	{\footnotesize
		\node at (0,0) {$\emptyset$};
		\node at (1,0) {$\emptyset$};
		\node at (2,0) {$\emptyset$};
		\node at (0,-1) {$\emptyset$};
		\node at (0,-2) {$\emptyset$};
		\node at (0,-3) {$\emptyset$};
		\node at (1,-1) {$\emptyset$};
	\Yboxdim{0.25cm}
		\node at (1,-2) {\yng(1)};
		\node at (1,-3) {\yng(1,1)};
		\node at (2,-1) {\yng(1)};
		\node at (2,-2) {\yng(2)};
		\node at (2,-3) {\yng(2,2)};
	}
	\draw (.2,0) -- (.8,0);
	\draw (.2,-1) -- (.8,-1);
	\draw (.2,-2) -- (.8,-2);
	\draw (.2,-3) -- (.8,-3);
	\draw (1.2,0) -- (1.8,0);
	\draw (1.2,-1) -- (1.8,-1);
	\draw (1.2,-2) -- (1.7,-2);
	\draw (1.2,-3) -- (1.7,-3);
	
	\draw (0,-.2) -- (0,-.8);
	\draw (0,-1.2) -- (0,-1.8);
	\draw (0,-2.2) -- (0,-2.8);
	\draw (1,-.2) -- (1,-.8);
	\draw (1,-1.2) -- (1,-1.8);
	\draw (1,-2.2) -- (1,-2.7);
	\draw (2,-.2) -- (2,-.8);
	\draw (2,-1.2) -- (2,-1.8);
	\draw (2,-2.2) -- (2,-2.7);
	\end{scope}
\end{tikzpicture}
\end{center}
\end{ex}

Given a (dual) growth $\Lambda$ we define $P(\Lambda)$ as the semistandard Young tableau determined by the last column of $\Lambda$, i.e., the shape of $P(\Lambda)$ restricted to entries at most $i$ is $\Lambda_{i,m}$, and $Q(\Lambda)$ as the (dual) semistandard Young tableau determined by the last row of $\Lambda$, i.e., the shape of $Q(\Lambda)$ restricted to entries at most $j$ is $\Lambda_{n,j}$. The according tableaux for the third growth in Example~\ref{ex:growths} are
\[
P(\Lambda)=\young(123,3)\,, \qquad\qquad Q(\Lambda)=\young(112,2)\,.
\]
If $\Lambda$ is a (dual) growth associated with $A$ and $P=P(\Lambda),Q=Q(\Lambda)$, we write
\[
\Lambda: A \rightarrow (P,Q),
\]
and say that ``$\Lambda$ is a (dual) growth from $A$ to $(P,Q)$.  While in general, there may be multiple (dual) growths from $A$ to $(P,Q)$ the setting introduced next allows us to choose a unique (dual) growth.
Observe that each square of a growth has the form 
\begin{equation}
\label{eq:growth square}
\begin{tikzpicture}[baseline=-3.5ex]
\draw (-0.2,-.1) --node[left]{\rotatebox{-90}{$\prec$}} (-0.2,-.9);
\draw (1.2,-.1) --node[right]{\rotatebox{-90}{$\prec$}} (1.2,-.9);
\draw (0.1,0.2) --node[above]{$\prec$} (.9,0.2);
\draw (.1,-1.2) --node[below]{$\prec$} (.9,-1.2);
\node at (-0.2,0.2) {$\mu$};
\node at (1.2,0.2) {$\rho$};
\node at (-0.2,-1.15) {$\la$};
\node at (1.2,-1.2) {$\nu$};
\node at (.5,-.5) {$a$};
\end{tikzpicture}
\quad \text{with} \quad a \in \bb{N}, \quad \nu \in \mc{U}(\la,\rho,k), \quad \mu \in \mc{D}(\la,\rho,k-a),
\end{equation}
where $k=|\nu/(\la\cup\rho)|=|(\la\cap \rho)/\mu|+a$, and each square of a dual growth has the form
\begin{equation}
\label{eq:dual growth square}
\begin{tikzpicture}[baseline=-3.5ex]
\draw (-0.2,-.1) --node[left]{\rotatebox{-90}{$\prec$}} (-0.2,-.9);
\draw (1.2,-.1) --node[right]{\rotatebox{-90}{$\prec$}} (1.2,-.9);
\draw (0.1,0.2) --node[above]{$\prec'$} (.9,0.2);
\draw (.1,-1.2) --node[below]{$\prec'$} (.9,-1.2);
\node at (-0.2,0.2) {$\mu$};
\node at (1.2,0.2) {$\rho$};
\node at (-0.2,-1.15) {$\la$};
\node at (1.2,-1.2) {$\nu$};
\node at (.5,-.5) {$b$};
\end{tikzpicture}
\quad \text{with} \quad b \in \{0,1\}, \quad \nu \in \mc{U}^*(\la,\rho,k), \quad \mu \in \mc{D}^*(\la,\rho,k-b),
\end{equation}
where $k=|\nu/(\la\cup\rho)|=|(\la\cap \rho)/\mu|+b$.

\begin{defi}
A set of \deff{local growth rules} $\F_{\bullet}$ is a family of bijections
\[
\F_{\la,\rho,k}: \bigcup_{i=1}^k \mc{D}(\la,\rho,i) \rightarrow \mc{U}(\la,\rho,k),
\]
for each ordered pair $\la,\rho$ and non-negative integer $k$. Analogously, a set of \deff{local dual growth rules} $\F_{\bullet}^*$ is a family of bijections
\[
\F_{\la,\rho,k}^*:  \mc{D}^*(\la,\rho,k) \cup \mc{D}^*(\la,\rho,k-1) \rightarrow \mc{U}^*(\la,\rho,k),
\]
for each ordered pair $\la,\rho$ and non-negative integer $k$.
A (dual) growth $\Lambda$ is called an \deff{$\F_\bullet$-growth diagram} if each square of $\Lambda$ satisfies $\nu = \F_{\la,\rho,|(\la\cap\rho)/\mu|+a}(\mu)$ and an \deff{$\F_\bullet^*$-dual growth diagram} if each square of $\Lambda$ satisfies $\nu = \F^*_{\la,\rho,|(\la\cap\rho)/\mu|+b}(\mu)$,
where $\mu,\la,\rho,\nu,a,b$ are defined as in \eqref{eq:growth square} or \eqref{eq:dual growth square} respectively.
\end{defi}

The key observation is that each set of local (dual) growth rules yields a bijection between $n \times m$ matrices $A$ with non-negative entries and pairs $(P,Q)$ of SSYT of the same shape, or $n \times m$ $\{0,1\}$-matrices $B$ and pairs $(P,Q)$ of same shape, where $P$ is an SSYT and $Q$ a dual SSYT respectively, and hence a proof of Theorem~\ref{thm:Cauchy identities}. Indeed, starting with a matrix $A$ or $B$, we construct a (dual) growth diagram $\Lambda$ using the local (dual) growth rules and then map $\Lambda$ to the pair $(P(\Lambda),Q(\Lambda))$. On the other hand, starting with the pair $(P,Q)$ of tableaux we can construct a unique (dual) growth diagram by placing the chains of partitions corresponding to $P$ or $Q$ in the rightmost column or bottommost row and filling the remaining grid (including the matrix entries) according to
\[
\mu = \F_{\la,\rho,|\nu/(\la\cup\rho)|}^{-1}(\nu), \qquad a= |\nu|+|\mu|-|\la|-|\rho|,
\]
for the case of a growth or
\[
\mu = {\F_{\la,\rho,|\nu/(\la\cup\rho)|}^{*}}^{-1}(\nu), \qquad b= |\nu|+|\mu|-|\la|-|\rho|,
\]
for the case of a dual growth.

\begin{ex}
The $\F^\row$-growth diagram associated to the matrix
\[
A = \begin{pmatrix}
0 & 2 & 1 \\
1 & 1 & 0 \\
2 & 0 & 0
\end{pmatrix}
\]
is shown next. As in Example~\ref{ex:growths} we omit the zero entries in the growth diagram.
\begin{center}
\begin{tikzpicture}[scale=1.5]
\Yboxdim{.25cm}
\node at (0,-3) {$\emptyset$};
\node at (0,-2) {$\emptyset$};
\node at (0,-1) {$\emptyset$};
\node at (0,0) {$\emptyset$};
\node at (1,0) {$\emptyset$};
\node at (1,-1) {$\emptyset$};
\node at (2,0) {$\emptyset$};
\node at (3,0) {$\emptyset$};

\node at (1.5,-.5) {$2$};
\node at (2.5,-.5) {$1$};
\node at (.5,-1.5) {$1$};
\node at (1.5,-1.5) {$1$};
\node at (.5,-2.5) {$2$};

\node at (2,-1) {\yng(2)};
\node at (3,-1) {\yng(3)};
\node at (1,-2) {\yng(1)};
\node at (2,-2) {\yng(3,1)};
\node at (3,-2) {\yng(3,2)};
\node at (1,-3) {\yng(3)};
\node at (2,-3) {\yng(3,3)};
\node at (3,-3) {\yng(3,3,1)};

\draw (.2,0) -- (.8,0);
\draw (.2,-1) -- (.8,-1);
\draw (.2,-2) -- (.8,-2);
\draw (.2,-3) -- (.65,-3);
\draw (1.2,0) -- (1.8,0);
\draw (1.2,-1) -- (1.7,-1);
\draw (1.2,-2) -- (1.65,-2);
\draw (1.35,-3) -- (1.65,-3);
\draw (2.2,0) -- (2.8,0);
\draw (2.3,-1) -- (2.65,-1);
\draw (2.35,-2) -- (2.65,-2);
\draw (2.35,-3) -- (2.65,-3);

\draw (0,-.2) -- (0,-.8);
\draw (0,-1.2) -- (0,-1.8);
\draw (0,-2.2) -- (0,-2.8);
\draw (1,-.2) -- (1,-.8);
\draw (1,-1.2) -- (1,-1.8);
\draw (1,-2.2) -- (1,-2.8);
\draw (2,-.2) -- (2,-.8);
\draw (2,-1.2) -- (2,-1.7);
\draw (2,-2.3) -- (2,-2.7);
\draw (3,-.2) -- (3,-.8);
\draw (3,-1.2) -- (3,-1.7);
\draw (3,-2.3) -- (3,-2.65);
\end{tikzpicture}
\end{center}
By reading the last column and row of the above growth diagram we obtain
\[
P = \young(111,223,3)\,, \qquad \quad Q= \young(111,222,3)\,.
\]
\end{ex}

We denote by $\RSK_{\F_\bullet}$ and  $\RSK^*_{\F_\bullet^*}$ the bijections $A \mapsto (P,Q)$ induced by the local (dual) growth rules $\F_\bullet$ and $\F_\bullet^*$ as described above. In the remainder of this subsection we describe how $\RSK_{\F_\bullet}$ can be translated into an insertion algorithm. The translation of $\RSK^*_{\F_\bullet^*}$ into an insertion algorithm can be done analogously; for a detailed description see \cite[Lemma 2.10]{FriedenSchreierAigner24plus}. 
Since an insertion algorithm describes how the $P$ tableau, also called the \deff{insertion tableau}, changes, it suffices to restrict ourselves to one column of the $\F_\bullet$-growth diagram as displayed next. Remember that $T^{(i)}$ denotes the shape of a tableau when restricted to entries at most $i$.

\begin{center}
\begin{tikzpicture}[scale=1.75]
\node at (0,0) {$T^{(0)}$};
\node at (0,-1) {$T^{(1)}$};
\node at (0,-2) {$T^{(n-1)}$};
\node at (0,-3) {$T^{(n)}$};
\node at (1,0) {$\wh{T}^{(0)}$};
\node at (1,-1) {$\wh{T}^{(1)}$};
\node at (1,-2) {$\wh{T}^{(n-1)}$};
\node at (1,-3) {$\wh{T}^{(n)}$};
\node at (.5,-.5) {$A_{1,j}$};
\node at (.5,-2.5) {$A_{n,j}$};

\draw (0,-.2) -- (0,-.8);
\draw (1,-.2) -- (1,-.8);
\draw (0,-2.2) -- (0,-2.8);
\draw (1,-2.2) -- (1,-2.8);
\draw (.25,0) -- (.75,0);
\draw (.25,-1) -- (.75,-1);
\draw (.35,-2) -- (.65,-2);
\draw (.3,-3) -- (.7,-3);
\draw[very thick, dotted] (.5,-1.2) -- (.5,-1.8);
\end{tikzpicture}
\end{center}

\begin{lem}
\label{lem:insertion algorithm}
Given an SSYT $T$ and a multiset $I=\{1^{(A_{1,j})},2^{(A_{2,j})},\ldots,n^{(A_{n,j})}\}$ which we call the \deff{insertion set}. The tableau $\wh{T}$ as defined in the above partial $\F_\bullet$-growth diagram is obtained by the following algorithm.
\begin{itemize}
\item Let $i$ be the smallest element in $I$ and denote by $k$ its multiplicity. Denote by $C$ the set of cells of $\F_{\la,\rho,k}(\mu)/(\la \cup \rho)$ where $\mu=T^{(i-1)}$, $\la=T^{(i)}$ and $\rho=\wh{T}^{(i-1)}$. Place an entry $i$ into all cells of $C$, delete all $i$'s from the insertion set and add all entries to the insertion set which have been bumped, i.e., replaced by an entry $i$.
\item Repeat the previous step until the insertion set is empty.
\end{itemize}
\end{lem}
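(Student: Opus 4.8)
The plan is to show that the column of an $\F_\bullet$-growth diagram, read from top to bottom, encodes exactly the insertion process described in the lemma. The key is that the partition sequences $T^{(0)} \prec \cdots \prec T^{(n)}$ in the left edge and $\wh{T}^{(0)} \prec \cdots \prec \wh{T}^{(n)}$ in the right edge are the chains corresponding to $T$ and $\wh{T}$, and that the $i$-th square of the column is of the form \eqref{eq:growth square} with $\mu = T^{(i-1)}$, $\la = T^{(i)}$, $\rho = \wh{T}^{(i-1)}$, $\nu = \wh{T}^{(i)}$, and matrix entry $a = A_{i,j}$. Thus $\nu = \F_{\la,\rho,|(\la\cap\rho)/\mu|+a}(\mu)$ by the definition of an $\F_\bullet$-growth diagram. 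So the lemma reduces to tracking how the ``insertion set'' is transformed as one passes through successive squares.

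First I would set up an invariant describing the insertion set after processing the first $i-1$ squares: I claim that when the algorithm is about to process the square at height $i$, the current insertion set $I_{i}$ consists of the entries $\{i^{(A_{i,j})},\dots,n^{(A_{n,j})}\}$ together with a multiset of values, all $> i-1$ wait — more precisely, all $\geq i$, recording the entries that have been ``bumped out'' of rows of the first $i-1$ steps and not yet reinserted; and that $\rho = \wh{T}^{(i-1)}$ is obtained from $\la = T^{(i)}$ by removing the cells indexed by the multiplicity-of-$i$ part of... Let me restate the cleaner invariant: after processing squares $1$ through $i-1$, the right edge partition $\rho=\wh T^{(i-1)}$ and the left edge partition $\la = T^{(i)}$ satisfy $|\rho| - |\la|$ plus the number of not-yet-consumed copies of values $\le i$... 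The correct bookkeeping is that $\rho/(\la\cap\rho)$ together with the pending bumped entries of value $i$ and the $A_{i,j}$ fresh copies of $i$ exactly make up the multiset on which $\F_{\la,\rho,k}$ acts (with $k = |(\la\cap\rho)/T^{(i-1)}| + A_{i,j}$ once we also know the multiplicity of $i$ in the insertion set equals $A_{i,j}$ plus the bumped $i$'s). I would prove this by induction on $i$, using that local growth rules are bijections $\bigcup_{l} \mc D(\la,\rho,l) \to \mc U(\la,\rho,k)$, so that specifying $\mu = T^{(i-1)}$ and $a$ determines $\nu = \wh T^{(i)}$ uniquely, matching what the algorithm outputs.

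The main obstacle is matching the combinatorial vocabulary of ``bumping'' in the insertion-algorithm statement with the set/multiset encoding $\Ret(\mu)$, $\Set(\nu)$ used to define the local growth rules. Concretely: when the algorithm places an entry $i$ into the cells of $C = \F_{\la,\rho,k}(\mu)/(\la\cup\rho)$ and records the ``bumped'' entries, I must verify that the entries bumped — i.e. those cells of $\nu=\wh T^{(i)}$ that lie in $\la\setminus\rho$, or rather the entries of $T$ displaced into the next row — are precisely the multiset $\Ret(\mu')$ feeding into the next square, where $\mu' = T^{(i)}$. This requires carefully relating the positions of removable ribbons of $\la\cap\rho$ (which index $\Ret$) to the cells in which the bumped entries of $T$ sit, and similarly relating the position labels $0,1,\dots,d$ used for addable ribbons to actual rows of the growing tableau. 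I expect this to be a moderately involved but essentially bookkeeping argument: once the dictionary between ribbon-positions and tableau-rows is fixed, each step of the algorithm is literally one application of a local growth rule.

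I would therefore organize the proof as: (1) identify the $i$-th square's four corners and matrix entry with the data $(\mu,\la,\rho,\nu,a)$; (2) state and prove by induction the invariant that the insertion set at step $i$, restricted to the value $i$, has size $k - |(\la\cap\rho)/\mu|$ where $k$ is the parameter of the local growth rule applied, and that the values $>i$ in the insertion set are exactly those ``stored'' bumped entries which will reappear as $\Ret$-data of later squares in the same column; (3) conclude that the algorithm's output $\wh T$ coincides with the right edge of the growth-diagram column, hence with the lemma's $\wh T$. The reader may also be referred to the analogous dual statement \cite[Lemma 2.10]{FriedenSchreierAigner24plus} for the parallel argument.
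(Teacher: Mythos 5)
Your overall strategy coincides with the paper's: read off one column of the growth diagram, identify the $i$-th square with the data $(\mu,\la,\rho,\nu,a)=(T^{(i-1)},T^{(i)},\wh{T}^{(i-1)},\wh{T}^{(i)},A_{i,j})$, and check that the algorithm invokes $\F_{\la,\rho,k}$ with the same parameter $k$ as that square. But the single piece of bookkeeping that carries the whole proof is misidentified in your plan. The entries bumped at step $i$ are the original $T$-entries (all of value $>i$) sitting at those cells of $C=\nu/(\la\cup\rho)$ that lie inside the shape of $T$; they are not ``cells of $\nu$ lying in $\la\setminus\rho$'' --- those cells carry the entries $i$ of $T$ that are \emph{not} bumped and simply stay in place --- and they do not feed into ``the next square'' but into the squares indexed by their respective values. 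The statement you actually need, and which is essentially the entire content of the paper's proof, is: when step $i$ is reached, the cells where an entry $i$ of $T$ was bumped at an earlier step are exactly the cells of $(\la\cap\rho)\setminus\mu$ (such a cell lies in $\la/\mu$, hence carries entry $i$ in $T$, and lies in $\rho=\wh{T}^{(i-1)}$, hence is already occupied by a smaller entry, and conversely). Consequently the multiplicity of $i$ in the insertion set is $A_{i,j}+|(\la\cap\rho)/\mu|$, which is precisely the parameter of the growth square, and since $|\F_{\la,\rho,k}(\mu)/(\la\cup\rho)|=k$, placing the $i$'s into $C$ reproduces $\nu=\wh{T}^{(i)}$. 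Your invariant in step (2) also has the count wrong: the multiplicity of $i$ in the insertion set at step $i$ is $k$ itself (equivalently, the number of previously bumped $i$'s is $|(\la\cap\rho)\setminus\mu|$), not $k-|(\la\cap\rho)/\mu|=A_{i,j}$, which counts only the fresh copies.

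The second problem is the announced detour through the encodings $\Ret(\mu)$, $\Set(\nu)$ and a ``dictionary between ribbon-positions and tableau-rows''. This is neither needed nor available: the lemma is stated for an arbitrary set of local growth rules $\F_\bullet$, i.e.\ abstract bijections $\bigcup_{i\le k}\mc{D}(\la,\rho,i)\to\mc{U}(\la,\rho,k)$, whereas the ribbon/multiset encoding is specific to $\F_\bullet^{\row}$ and $\F_\bullet^{\col}$; for a general rule there is no notion of ``displaced into the next row''. The argument only uses that each square determines $\nu$ from $(\mu,\la,\rho,a)$ via $\F_{\la,\rho,|(\la\cap\rho)/\mu|+a}$, together with the cell-level identification above; once you replace your $\la\setminus\rho$ identification by $(\la\cap\rho)\setminus\mu$ and drop the ribbon bookkeeping, your induction over the squares collapses to the paper's four-line proof.
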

\begin{proof}
Let $i$ be the smallest element in $I$. The positions where an entry $i$ of $T$ was bumped in a previous step are the cells of $(\la \cap \rho)\setminus \mu$, where $\la,\rho,\mu$ are defined as above. By definition, the multiplicity $k$ of $i$ in $I$ is $A_{i,j}$ plus the number of times an entry $i$ has been bumped in a previous step, i.e., $k = A_{i,j}+ |(\la \cap \rho)\setminus \mu|$. The definition of $\nu=\F_{\la,\rho,k}(\mu)$ implies $k = |\nu \setminus (\la \cup \rho)|$. Hence we can indeed obtain $\nu$ by placing all $i$ entries of $C$ into the shape $\nu \setminus (\la \cup \rho)$.
\end{proof}

For a set of dual local growth rules $\F^*_\bullet$ there is an analogous algorithm by restricting to sets instead of multisets; compare to \cite[Lemma 2.10]{FriedenSchreierAigner24plus} for more details.
We define the \deff{$\F_\bullet$-insertion} of $I$ into $T$ as the algorithm described in Lemma~\ref{lem:insertion algorithm} and denote it by $ T \xleftarrow[\F_\bullet]{}I$. Analogously we define the \deff{dual $\F^*_\bullet$-insertion} by restricting the above algorithm to sets.
We call a (dual) $\F_\bullet$-insertion \deff{traceable} if 
\[
 T \xleftarrow[\F_\bullet]{}I =
 T \xleftarrow[\F_\bullet]{}\{i_1\} \xleftarrow[\F_\bullet]{}\{i_2\}  \xleftarrow[\F_\bullet]{} \cdots \xleftarrow[\F_\bullet]{}\{i_k\},
\]
holds for any SSYT $T$ and set $I=\{i_1 \leq \cdots \leq i_k\}$, i.e., we are allowed to insert the elements of $I$ ``sequentially''. Analogously we call a (dual) $\F_\bullet$-insertion \deff{reverse traceable} if 
\[
 T \xleftarrow[\F_\bullet]{}I =
 T \xleftarrow[\F_\bullet]{}\{i_k\} \xleftarrow[\F_\bullet]{}\{i_{k-1}\}  \xleftarrow[\F_\bullet]{} \cdots \xleftarrow[\F_\bullet]{}\{i_1\},
\]
holds for any SSYT $T$ and any set $I=\{i_1 \leq \cdots \leq i_k\}$.
It is easy to verify, that the row insertion $\F^{\row}_\bullet$ and the dual column insertion $\F^{\col^*}_\bullet$ are traceable and that the column insertion $\F^{\col}_\bullet$  and the dual row insertion $\F^{\row^*}_\bullet$ are reverse traceable; compare for example with \cite[Section 3]{vanLeeuwen05}.\bigskip

Given an $n \times m$ matrix $A$ with non-negative entries, we can construct the image of $\RSK_{\F_\bullet}$ using the $\F_\bullet$-insertion  as follows. First we construct the biword $\omega_A$ by reading the columns of $A$ from top to bottom, starting with the leftmost column, and appending $A_{i,j}$-times $\begin{pmatrix}
j \\ i \end{pmatrix}$ to $\omega_A$.
Given a biword
\[
\omega_A = \begin{pmatrix}
j_1 & \cdots &j_1 & \cdots& \cdots & j_k & \cdots & j_k\\
i_{1,1} & \cdots & i_{1,l_1} & \cdots & \cdots & i_{k,1} & \cdots & i_{k,l_k}
\end{pmatrix},
\]
the \deff{insertion tableau} $P$ is defined as
\[
P= \emptyset \xleftarrow[\F_\bullet]{}\{i_{1,1},\ldots,i_{1,l_1}\} \xleftarrow[\F_\bullet]{} \cdots \xleftarrow[\F_\bullet]{} \{i_{k,1},\ldots,i_{k,l_k}\}.
\]
The \deff{recording tableau} $Q$ is defined as the SSYT which records the growth of $P$ by placing $j_r$ into the cells which were added during the $\F_\bullet$-insertion of $\{i_{r,1},\ldots,i_{r,l_r}\}$.

\begin{ex}
The biword $\omega_A$ of the matrix 
\[
A = \begin{pmatrix}
0 & 2 & 1 \\
1 & 1 & 0 \\
2 & 0 & 0
\end{pmatrix}
\]
as in Example~\ref{ex:growths} is
\[
\omega_A = \begin{pmatrix}
1 & 1 & 1 & 2 & 2 & 2 & 3 \\
2 & 3 & 3 & 1 & 1 & 2 & 1
\end{pmatrix}.
\]
Using that $\F^\row_\bullet$ is traceable, the stepwise construction of the insertion tableau $P$ and the recording tableau $Q$ of $\RSK_{\F_\bullet^\row}$ is shown next.
\[
\begin{array}{ccccccccc}
\emptyset & \young(2) & \young(23) & \young(233) & \young(133,2) & \young(113,23) & \young(112,233) & \young(111,223,3) &=P \\[24pt]
\emptyset & \young(1) & \young(11) & \young(111) & \young(111,2) & \young(111,22) & \young(111,222) & \young(111,222,3) &=Q
\end{array}
\]
By comparing with Example~\ref{ex:growths}, we see that we obtain the same pair of tableaux.
\end{ex}

\begin{rem}
\label{rem:sym of RSK}
We call a set of local growth rules $\F_\bullet$ \deff{symmetric}, if $\F_{\la,\rho,k}(\mu)=\F_{\rho,\la,k}(\mu)$ for all partitions $\la,\rho,\mu$ and integers $k$. It is immediate that the $\RSK_{\F_\bullet}$ correspondence of a symmetric set of local growth rules $\F_\bullet$ satisfies the following property
\begin{equation}
\label{eq:sym of RSK}
\RSK_{\F_\bullet}(A) =(P,Q) \qquad \Leftrightarrow \qquad \RSK_{\F_\bullet}(A^T) =(Q,P).
\end{equation}
It is not difficult to see that the sets $\mc{D}(\la,\rho,k)$ and $\mc{U}(\la,\rho,k)$ can be expressed by using $\la \cap \rho$ and $\la \cup \rho$ instead of $\la,\rho$. Hence all local growth rules whose definition is based on addable and removable ribbons, in particular $\F_\bullet^{\row}$ and $\F_{\bullet}^\col$, are symmetric.
\end{rem}

\subsection{Skew identities}
\label{sec:skew}
One of the advantages of using growth diagrams is that we can generalise the above results immediately to the setting of \emph{skew shapes} without any extra work. \\

Let $\mu \subseteq \la$ be two partitions. A filling of the cells of $\la/\mu$ with positive integers is called
\begin{itemize}
\item a \deff{semistandard Young tableau (SSYT)} of skew shape $\la/\mu$ if the entries are weakly increasing along rows and strictly increasing along columns,
\item a \deff{dual semistandard Young tableau} of skew shape $\la/\mu$ if the entries are strictly increasing along rows and weakly increasing along columns.
\end{itemize}
We denote by $\SSYT_{\la/\mu}(n)$ the set of semistandard Young tableaux of skew shape $\la/\mu$ with entries at most $n$ and by $\SSYT_{\la/\mu}^*(n)$ the set of dual semistandard Young tableaux of skew shape $\la/\mu$ and entries at most $n$. The \deff{Schur polynomial} $s_{\la/\mu}(\x)$ is defined for a family of variables $\x=(x_1,\ldots,x_n)$ as
\[
s_{\la/\mu}(\x) = \sum_{T \in \SSYT_{\la/\mu}(n)}\x^T,
\]
where the weight $\x^T$ of a skew shaped tableau $T$ is defined analogously to the regular case.

\begin{thm}
\label{thm:skew Cauchy identities}
Let $\x=(x_1,\ldots, x_n)$ and $\y=(y_1,\ldots,y_m)$ be two sequences of variables and $\la,\rho$ be two partitions. Then
\begin{align}
\label{eq:skew Cauchy}
\sum_{\nu} s_{\nu/\rho}(\x) s_{\nu/\la}(\y) &= \prod_{\substack{1 \leq i \leq n \\ 1 \leq j \leq m}} \frac{1}{1-x_iy_j} \sum_{\mu} s_{\la/\mu}(\x) s_{\rho/\mu}(\y),\\
\label{eq:skew dual Cauchy}
\sum_{\nu} s_{\nu/\rho}(\x) s_{\nu^\prime/\la^\prime}(\y) &= \prod_{\substack{1 \leq i \leq n \\ 1 \leq j \leq m}} (1+x_iy_j) \sum_{\mu} s_{\la/\mu}(\x) s_{\rho^\prime/\mu^\prime}(\y).
\end{align}
\end{thm}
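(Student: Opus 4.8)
The plan is to mimic the proof of the non-skew Cauchy identities given earlier in the excerpt, replacing the vacuum vector $\emptyset$ by the partitions $\la$ and $\rho$ and working with the skew Schur polynomial expansions
\begin{align*}
s_{\nu/\rho}(\x) &= \left\langle U_{x_n}\cdots U_{x_1}\rho,\nu\right\rangle, \qquad
s_{\nu/\la}(\y) = \left\langle D_{y_1}\cdots D_{y_m}\nu,\la\right\rangle,\\
s_{\nu^\prime/\la^\prime}(\y) &= \left\langle D_{y_1}^*\cdots D_{y_m}^*\nu,\la\right\rangle,
\end{align*}
which follow from the interpretation of a skew (dual) SSYT as a chain of (dually) interlacing partitions from $\mu$ (resp.\ $\la$, $\rho$) to $\nu$, exactly as in \eqref{eq:schur as ups}--\eqref{eq:schur as dual downs}. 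With these formulas in hand, the left-hand side of \eqref{eq:skew Cauchy} equals $\left\langle D_{y_m}\cdots D_{y_1}U_{x_n}\cdots U_{x_1}\rho,\la\right\rangle$ upon inserting $\sum_\nu \nu\otimes\nu$, and the left-hand side of \eqref{eq:skew dual Cauchy} equals $\left\langle D_{y_m}^*\cdots D_{y_1}^* U_{x_n}\cdots U_{x_1}\rho,\la\right\rangle$ analogously.

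Next I would apply the commutation relations of Theorem~\ref{thm:commutation relations} to move every $D$ (resp.\ $D^*$) operator past every $U$ operator. As in the non-skew case this produces the scalar factor $\prod_{i,j}(1-x_iy_j)^{-1}$ (resp.\ $\prod_{i,j}(1+x_iy_j)$) out front, leaving
\[
\prod_{i,j}\frac{1}{1-x_iy_j}\left\langle U_{x_n}\cdots U_{x_1}D_{y_1}\cdots D_{y_m}\rho,\la\right\rangle
\]
and its dual analogue. The only difference from before is that $D_{y_j}\rho$ is no longer $\rho$ but a sum over $\mu\prec\rho$, so instead of collapsing to a single term we insert $\sum_\mu\mu\otimes\mu$ between the $D$'s and the $U$'s and use
\[
\left\langle U_{x_n}\cdots U_{x_1}\mu,\la\right\rangle = s_{\la/\mu}(\x),\qquad
\left\langle D_{y_1}\cdots D_{y_m}\rho,\mu\right\rangle = s_{\rho/\mu}(\y),
\]
which recovers $\sum_\mu s_{\la/\mu}(\x)s_{\rho/\mu}(\y)$; for the dual identity one instead gets $\sum_\mu s_{\la/\mu}(\x)s_{\rho^\prime/\mu^\prime}(\y)$ using $\left\langle D_{y_1}^*\cdots D_{y_m}^*\rho,\mu\right\rangle = s_{\rho^\prime/\mu^\prime}(\y)$.

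The computation is essentially routine once the skew Schur polynomial identities \eqref{eq:schur as ups}--\eqref{eq:schur as dual downs} are adapted to the skew setting, so there is no serious obstacle; the only point requiring a little care is the bookkeeping with the inner product when resolving identity operators $\sum_\kappa\kappa\otimes\kappa$ on the left versus the right of the operator products, and checking that the adjointness between $U_x$ and $D_y$ (resp.\ $D_y^*$ with respect to the conjugate) makes the two pairings match up correctly. One could also phrase the whole argument bijectively via growth diagrams on a rectangle whose left column and bottom row are pre-filled with the chains coming from $\la$ and $\rho$, obtaining a bijection between $n\times m$ (resp.\ $\{0,1\}$-)matrices together with a pair of skew tableaux of shapes $\la/\mu,\rho/\mu$ and pairs of skew tableaux of shapes $\nu/\rho,\nu/\la$ (resp.\ $\nu/\rho,\nu^\prime/\la^\prime$); I would mention this as the combinatorial counterpart but carry out the short algebraic proof in detail.
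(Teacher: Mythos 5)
Your proposal is correct and follows essentially the same route as the paper: the paper also deduces both skew identities by expressing skew Schur polynomials via the up and (dual) down operators, as in \eqref{eq:skew schur as ups}--\eqref{eq:skew schur as dual downs}, and then repeating the commutation-relation argument from the non-skew case, with the final pairing resolved over $\mu$ exactly as you describe. Your closing remark about the growth-diagram version likewise matches the combinatorial extension the paper gives right after the algebraic argument.
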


Algebraically, both identities follow immediately from the commutation relations in Theorem~\ref{thm:commutation relations} as described in the paragraph after \eqref{eq:cauchy operator inner product} by using the fact, that the Schur polynomials for skew shapes can be expressed as
\begin{align}
\label{eq:skew schur as ups}
s_{\la/\mu}(\x) &= \left\langle U_{x_n} \cdots U_{x_1} \mu , \la \right\rangle, \\
\label{eq:skew schur as downs}
s_{\la/\mu}(\y) &= \left\langle D_{y_1} \cdots D_{y_m} \la , \mu \right\rangle, \\
\label{eq:skew schur as dual downs}
s_{\la^\prime/\mu^\prime}(\y) &= \left\langle D_{y_1}^* \cdots D_{y_m}^* \la , \mu \right\rangle.
\end{align}
On the other side, the skew Cauchy identities in Theorem~\ref{thm:skew Cauchy identities} actually imply the commutation relations in Theorem~\ref{thm:commutation relations}, i.e., both theorems are equivalent to each other. Indeed, by restricting to one variable in both families, i.e., $\x=(x), \y=(y)$, the left hand side of \eqref{eq:skew Cauchy} is equal to
\[
\sum_{\nu} s_{\nu/\rho}(x) s_{\nu/\la}(y)
 = \sum_{\nu} \left\langle U_x \rho,\nu\right\rangle \left\langle D_y \nu,\la\right\rangle 
 = \left\langle D_y U_x \rho , \la\right\rangle,
\]
and the right hand side of \eqref{eq:skew Cauchy} is equal to
\[
 \frac{1}{1-xy} \sum_{\mu} s_{\la/\mu}(x) s_{\rho/\mu}(y)
 = \frac{1}{1-xy} \sum_{\mu} \left\langle U_x \mu , \la \right\rangle \left\langle D_y \rho , \mu \right\rangle 
 = \left\langle \frac{1}{1-xy} U_x D_y \rho , \la\right\rangle,
\]
which implies \eqref{eq:commutation relation}. The commutation relation \eqref{eq:dual commutation relation} follows analogously from \eqref{eq:skew dual Cauchy}.\\

The immediate generalisation of the algebraic proof to the skew shape setting can also be transferred to the combinatorial level. We present this explicitly for \eqref{eq:skew Cauchy}, the case \eqref{eq:skew dual Cauchy} is obtained analogously. First we interpret the left hand side of \eqref{eq:skew Cauchy} as the generating function of pairs $(P,Q)$ of semistandard Young tableaux where $P\in \SSYT_{\nu/\rho}(n)$ and $Q \in \SSYT_{\nu/\la}(m)$ with respect to the weight $\x^P\y^Q$, and the right hand side as the generating function of triples $(A,S,T)$ where $A$ is an $n\times m$ matrix with non-negative integer entries, $S \in \SSYT_{\la/\mu}(n)$, and $T \in \SSYT_{\rho/\mu}(m)$ with respect to the weight
\[
 \x^{S} \y^{T}
\prod_{\substack{1 \leq i \leq n \\ 1\leq j \leq m}}(x_i y_j)^{a_{i,j}}.
\]
For a given set of local growth rules $\F_\bullet$ we can easily extend $\RSK_{\F_\bullet}$ to a weight preserving bijection between such triples $(A,S,T)$ and pairs $(P,Q)$. For a triple $(A,S,T)$, we construct an $\F_\bullet$-growth diagram $\Lambda$ associated to $(A,S,T)$ by setting $\Lambda_{i,0}=S^{(i)}$, $\Lambda_{0,j}=T^{(j)}$ and using the local growth rules $\F_\bullet$  to fill the rest of $\Lambda$. We define $P=P(\Lambda)$ and $Q=Q(\Lambda)$ as before.
Note that the resulting growth $\Lambda$ is actually a generalisation of the objects defined in Definition~\ref{def:growth} by replacing the condition \eqref{eq:growth sizes} with 
\[
|\Lambda_{i,j}| =|S^{(i)}|+|T^{(j)}|-|S^{(0)}|+ \sum_{\substack{1 \leq k \leq i \\ 1 \leq l \leq j}}A_{k,l}.
\]
For simplicity we refer to both objects as \emph{growths}.
Since the local growth rules $F_\bullet$ are bijections, it is immediate that $\RSK_{\F_\bullet}$ proves \eqref{eq:skew Cauchy} bijectively.

\begin{ex}
The $\F^\row$-growth diagram associated to the triple $(A,S,T)$ for
\[
A = \begin{pmatrix}
1 & 0 & 0 \\
0 & 0 & 2 \\
0 & 1 & 0
\end{pmatrix}, \qquad
S= \begin{tikzpicture}[baseline=.4cm]
\tyoung(0cm,0cm,::2,23)
\Ygray
\tgyoung(0cm,0cm,;;)
\end{tikzpicture}\,, \qquad
T= \begin{tikzpicture}[baseline=.4cm]
\tyoung(0cm,0cm,::13,2)
\Ygray
\tgyoung(0cm,0cm,;;)
\end{tikzpicture}\,,
\]
where the cells of $\mu$ for $S$ and $T$ are coloured grey, is shown next.
\begin{center}
\begin{tikzpicture}[scale=2]
\Yboxdim{.25cm}
\node at (.5,-.5) {$1$};
\node at (1.5,-2.5) {$1$};
\node at (2.5,-1.5) {$2$};

\node at (0,0) {\yng(2)};
\node at (1,0) {\yng(3)};
\node at (2,0) {\yng(3,1)};
\node at (3,0) {\yng(4,1)};

\node at (0,-1) {\yng(2)};
\node at (1,-1) {\yng(4)};
\node at (2,-1) {\yng(4,1)};
\node at (3,-1) {\yng(4,2)};

\node at (0,-2) {\yng(3,1)};
\node at (1,-2) {\yng(4,2)};
\node at (2,-2) {\yng(4,2,1)};
\node at (3,-2) {\yng(6,2,2)};

\node at (0,-3) {\yng(3,2)};
\node at (1,-3) {\yng(4,2,1)};
\node at (2,-3) {\yng(5,2,1,1)};
\node at (3,-3) {\yng(6,3,2,1)};

\draw (.2,0) -- (.75,0);
\draw (.2,-1) -- (.7,-1);
\draw (.25,-2) -- (.7,-2);
\draw (.25,-3) -- (.7,-3);
\draw (1.25,0) -- (1.75,0);
\draw (1.3,-1) -- (1.7,-1);
\draw (1.3,-2) -- (1.7,-2);
\draw (1.3,-3) -- (1.65,-3);
\draw (2.25,0) -- (2.7,0);
\draw (2.3,-1) -- (2.7,-1);
\draw (2.3,-2) -- (2.6,-2);
\draw (2.35,-3) -- (2.6,-3);

\draw (0,-.2) -- (0,-.8);
\draw (1,-.2) -- (1,-.8);
\draw (2,-.2) -- (2,-.8);
\draw (3,-.2) -- (3,-.8);
\draw (0,-1.2) -- (0,-1.8);
\draw (1,-1.2) -- (1,-1.8);
\draw (2,-1.2) -- (2,-1.75);
\draw (3,-1.2) -- (3,-1.75);

\draw (0,-2.2) -- (0,-2.8);
\draw (1,-2.2) -- (1,-2.75);
\draw (2,-2.25) -- (2,-2.7);
\draw (3,-2.25) -- (3,-2.7);
\end{tikzpicture}
\end{center}
By reading the last column and row of the above growth diagram we obtain
\[
P = \begin{tikzpicture}[baseline=.9cm]
\tyoung(0cm,0cm,::::22,:13,22,3)
\Ygray
\tgyoung(0cm,0cm,;;;;,;)
\end{tikzpicture}\,, \qquad \quad Q= \begin{tikzpicture}[baseline=.9cm]
\tyoung(0cm,0cm,:::123,::3,13,2)
\Ygray
\tgyoung(0cm,0cm,;;;,;;)
\end{tikzpicture}\,.
\]
\end{ex}

As another consequence we obtain combinatorial proofs for the (dual) Pieri rules. Indeed, the skew (dual) Cauchy identity for $\rho=\mu=\emptyset$ and $\y=(y_1)$ implies
\begin{align}
\label{eq:Pieri via Cauchy}
\sum_{k \geq 0} y_1^k h_k(\x) s_\la(\x) = \prod_{i=1}^n \frac{1}{1-x_iy_1} s_\la(\x)
= \sum_{\nu} s_\nu(\x) s_{\nu/\la}(y_1) = \sum_{\nu \succ \la} y_1^{|\nu/\la|}s_\la(\x),\\
\label{eq:dual Pieri via dual Cauchy}
\sum_{k = 0}^n y_1^k e_k(\x) s_\la(\x) = \prod_{i=1}^n (1+x_iy_1) s_\la(\x)
= \sum_{\nu} s_\nu(\x) s_{\nu^\prime/\la^\prime}(y_1) = \sum_{\nu \succ^\prime \la} y_1^{|\nu/\la|}s_\la(\x),
\end{align}
where we used the generating functions for $h_k(\x)$ and $e_k(\x)$ on the one hand and the fact that $s_{\nu/\la}(y_1)$ is $0$ unless $\nu/\la$ is a horizontal strip. By comparing the coefficient of $y_1^k$ on both sides of \eqref{eq:Pieri via Cauchy} and \eqref{eq:dual Pieri via dual Cauchy} respectively, we obtain the Pieri and dual Pieri rule
\begin{equation}
\label{eq:Pieri and dual Pieri}
h_k(\x)s_\la(\x) = \sum_{\substack{\nu \succ \la \\ |\nu/\la|=k}}s_\nu(\la), \qquad
e_k(\x)s_\la(\x) = \sum_{\substack{\nu \succ^\prime \la \\ |\nu/\la|=k}}s_\nu(\la).
\end{equation}
For each choice of local (dual) growth rules $\F_\bullet$ we obtain a bijection between all pairs $(T,(a_i)_i)$ where $T \in \SSYT_\la(n)$ and $(a_i)_{1 \leq i \leq n}$ is a sequence of non-negative integers (resp., $a_i \in \{0,1\}$) with sum equal to $k$ and all $\wh{T} \in \SSYT_\nu(n)$ where $\nu \succ \la$ (resp., $\nu \succ^\prime \la$) and $|\nu/\la|=k$ by $\F_\bullet$-inserting (resp., $\F_\bullet^*$-inserting) $\{1^{(a_1)},\ldots,n^{(a_n)}\}$ into $T$.

\section{Classical Littlewood identities}
\label{sec:drei}

\subsection{Projection identities}
\label{sec:projection identities}
In this section we introduce a new type of identity relating the up and (dual) down operators which we call \deff{projection identities}. While the commutation relations of the previous section allow us to interchange the operators, these new identities allow us to ``replace'' up operators by (dual) down operators.\\

For a set $X$ of partitions we define the $\bb{Q}$-linear map $\pi_X: \bb{Y} \rightarrow \bb{Q}$ as
\[
\pi_X(\mu) := \sum_{\la \in X} \left\langle \mu,\la \right\rangle.
\]
Note that the linear extension of $\pi_X$ to $\bb{Y}\llbracket x_1,\ldots,x_n \rrbracket$ is well-defined even for an infinite set $X$ since the coefficients of the monomials of an element in $\bb{Y}\llbracket x_1,\ldots,x_n \rrbracket$ are always finite formal sums of partitions.
Following the convention of Ayyer and Kumari \cite[Definition 2.9]{AyyerKumari22} we call a partition \deff{$k$-asymmetric} if it has Frobenius notation $(a_1,\ldots,a_l|a_1+k,\ldots,a_l+k)$ for a non-negative integer $k$ and $(a_1-k,\ldots,a_l-k|a_1,\ldots,a_l)$ for a negative integer $k$. Denote by $\bb{P}_e$ the set of partitions with even parts, $\bb{P}_e^\prime$ the set of partitions with even columns, and by $\bb{P}_{\asym}^k$ the set of $k$-asymmetric partitions.

\begin{thm}
\label{thm:projection identities}
The up and (dual) down operators satisfy the following \deff{projection identities}
\begin{align}
\label{eq:projection all partitions}
\pi_{\bb{P}} U_x &= \frac{1}{1-x} \pi_{\bb{P}} D_x, \\
\label{eq:projection even partitions}
\pi_{\bb{P}_e} U_x &= \frac{1}{1-x^2} \pi_{\bb{P}_e} D_x, \\
\label{eq:projection even col partitions}
\pi_{\bb{P}_e^\prime} U_x &= \pi_{\bb{P}_e^\prime} D_x, \\
\label{eq:projection 1asym partitions}
\pi_{\bb{P}_\asym^{1}} U_x &= \pi_{\bb{P}_\asym^{1}} D_x^*, \\
\label{eq:projection -1asym partitions}
\pi_{\bb{P}_\asym^{-1}} U_x &= (1+x^2) \pi_{\bb{P}_\asym^{-1}} D_x^*.
\end{align}
\end{thm}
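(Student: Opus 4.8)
The strategy is to prove each projection identity by testing it against an arbitrary partition $\mu$, i.e., by showing that $\pi_X(U_x \mu)$ and the corresponding right-hand side $\pi_X(c(x) D_\bullet \mu)$ agree as power series in $x$. Since $\pi_X$ is linear and both sides are linear in $\mu$, it suffices to fix a partition $\mu$ and compare coefficients of $x^k$ for each $k \geq 0$. For the first identity \eqref{eq:projection all partitions}, the coefficient of $x^k$ on the left is $\#\{\nu : \mu \prec \nu, |\nu/\mu| = k, \nu \in \bb{P}\}$, which is simply the number of horizontal strips of size $k$ that can be added to $\mu$; on the right, $(1-x)^{-1} = \sum_j x^j$, so the coefficient of $x^k$ is $\sum_{i=0}^{k} \#\{\rho : \rho \prec \mu, |\mu/\rho| = i\}$, the number of ways to remove a horizontal strip of size at most $k$ from $\mu$. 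The plan is to exhibit a bijection between these two sets. In fact this is exactly the content of \eqref{eq:commutator via sets} specialised to $\la = \rho = \mu$ (taking $\la \cap \rho = \la \cup \rho = \mu$): the sets $\mc{U}(\mu,\mu,k)$ and $\bigcup_{i=0}^k \mc{D}(\mu,\mu,i)$ already have equal cardinality by Theorem~\ref{thm:commutation relations}, and the row/column insertion maps $\F^{\row}_{\mu,\mu,k}$, $\F^{\col}_{\mu,\mu,k}$ from Section~\ref{sec:up and downs} give the desired bijections. So \eqref{eq:projection all partitions} is essentially free from what has already been established. Similarly \eqref{eq:projection 1asym partitions} should follow by specialising \eqref{eq:dual commutator via sets} to $\la = \rho = \mu$, giving $|\mc{U}^*(\mu,\mu,k)| = |\mc{D}^*(\mu,\mu,k)| + |\mc{D}^*(\mu,\mu,k-1)|$, which matches $\pi(U_x \mu)$ versus $\pi(D_x^* \mu)$ coefficientwise — except that here one must be careful that the projection restricts the sum to $k$-asymmetric $\nu$, so the reduction is not quite to the full commutation relation and requires identifying which horizontal/vertical strips keep one inside $\bb{P}_\asym^1$.

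The genuinely new work is in understanding how the projections interact with the operators — that is, for which target partitions the strips added by $U_x$ or removed by $D_x$/$D_x^*$ stay within the relevant set $X \in \{\bb{P}_e, \bb{P}'_e, \bb{P}_\asym^1, \bb{P}_\asym^{-1}\}$. The key structural observation to establish is a \emph{pairing} on the boundary of $\mu$: adding a horizontal strip to a partition in $\bb{P}_e$ and then "correcting" back to $\bb{P}_e$ should match with removing a horizontal strip, with the weight discrepancy accounted for by the factor $(1-x^2)^{-1}$ (respectively the factors $1$ and $(1+x^2)$ in the other cases). Concretely, for \eqref{eq:projection even partitions} I would analyse the structure of a partition $\la$ with even parts: its addable and removable cells come in constrained positions, and a horizontal strip $\la/\mu$ with $\la \in \bb{P}_e$ is determined by choosing, for each pair of rows $(2i-1, 2i)$, how the boundary behaves. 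The plan is to show that the generating function (in $x$) for $\{\nu \in \bb{P}_e : \mu \prec \nu\}$ weighted by $x^{|\nu/\mu|}$ equals $(1-x^2)^{-1}$ times the generating function for $\{\rho : \rho \prec \mu, \rho \in \bb{P}_e\}$ — but wait, $\mu$ itself need not have even parts, so more care is needed: the identity is an operator identity on all of $\bb{Y}$, and $\pi_{\bb{P}_e}$ kills everything not in $\bb{P}_e$. So really one should first observe that $\pi_{\bb{P}_e} U_x \mu$ only sees $\nu \in \bb{P}_e$ with $\mu \prec \nu$ (no constraint on $\mu$), and $\pi_{\bb{P}_e} D_x \mu$ only sees $\rho \in \bb{P}_e$ with $\rho \prec \mu$. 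The claim becomes: for every partition $\mu$,
\[
\sum_{\substack{\nu \in \bb{P}_e \\ \mu \prec \nu}} x^{|\nu/\mu|} = \frac{1}{1-x^2} \sum_{\substack{\rho \in \bb{P}_e \\ \rho \prec \mu}} x^{|\mu/\rho|}.
\]
I expect this to follow from a direct combinatorial analysis: row by row (in pairs), the even-parts condition forces the strip sizes into arithmetic patterns whose generating functions telescope. The analogous identities \eqref{eq:projection even col partitions}, \eqref{eq:projection -1asym partitions} follow by the same method — for $\bb{P}'_e$ one works with columns (equivalently, conjugates and uses vertical strips), and for $\bb{P}_\asym^{-1}$ one works with the Frobenius coordinates directly, where $k$-asymmetry with $k = -1$ pins down the diagonal structure.

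The main obstacle I anticipate is the bookkeeping for the asymmetric cases \eqref{eq:projection 1asym partitions} and \eqref{eq:projection -1asym partitions}: unlike "even parts" or "even columns," which are row/column-local conditions, $k$-asymmetry is a condition coupling row lengths to column lengths through the Durfee square, and it is not obvious a priori that adding a horizontal strip (the action of $U_x$) can ever land in $\bb{P}_\asym^1$ in a way that matches removal of a vertical strip (the action of $D_x^*$) — one must check that the Durfee square size can change and track how Frobenius coordinates transform. I would handle this by passing to Frobenius notation $(a_1,\ldots,a_l \mid b_1,\ldots,b_l)$ and describing the four operators' actions on Frobenius coordinates explicitly (adding/removing a horizontal strip either extends the arm coordinates or, when it crosses the diagonal, changes $l$), then verifying the identity as a computation in these coordinates. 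An alternative, cleaner route — which I would pursue first — is to \emph{derive} the projection identities algebraically from the Littlewood identities in the same spirit as the equivalence between the skew Cauchy identities and the commutation relations shown after \eqref{eq:cauchy operator inner product}: applying $\pi_X$ to the known expansion of a single up-operator acting on $\emptyset$ and comparing with the product side. But since the paper's stated goal is to go the other direction (projection identities $\Rightarrow$ Littlewood identities bijectively), I expect the author wants the self-contained combinatorial proof, so the strip-counting analysis above is the one to write out in detail.
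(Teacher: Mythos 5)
Your coefficientwise reformulation and your treatment of \eqref{eq:projection all partitions} — specialise the commutation-relation bijection to $\la=\rho$, so that $\mc{U}_{\bb{P}}(\la,k)=\mc{U}(\la,\la,k)$ and $\mc{D}_{\bb{P}}(\la,k)=\mc{D}(\la,\la,k)$ — coincide with the paper, and that case is fine (as is the trivial case \eqref{eq:projection even col partitions}, where both sets are empty unless $k=\odd(\la^\prime)$ and are then singletons). The other three identities, however, are only sketched, and the sketches have real gaps. For \eqref{eq:projection even partitions} you state the correct generating-function claim but the ``row-by-row (in pairs) telescoping'' is never carried out, and it is not routine: the horizontal-strip condition couples consecutive rows, so the even-parts constraint does not localise to pairs of rows, and the displayed identity is essentially the whole content of the statement. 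The paper closes this case by a reduction you do not find: the halving maps $\la\mapsto(\la^-,\la^+)$ (floors and ceilings of the halved parts) give, via doubling, bijections $\mc{D}(\la^-,\la^+,k)\to\mc{D}_{\bb{P}_e}(\la,2k+\odd(\la))$ and $\mc{U}(\la^-,\la^+,k)\to\mc{U}_{\bb{P}_e}(\la,2k+\odd(\la))$, so the already-proved relation $|\mc{U}(\la^-,\la^+,k)|=\sum_{i\leq k}|\mc{D}(\la^-,\la^+,i)|$ produces exactly the factor $(1-x^2)^{-1}$. Some complete substitute for this step is missing from your argument.

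For \eqref{eq:projection 1asym partitions} your first idea — specialising \eqref{eq:dual commutator via sets} to $\la=\rho$ — does not work, as you half-acknowledge: $\mc{U}^*(\la,\la,k)$ counts all $\nu$ with $\la\prec'\nu\succ\la$ with no asymmetry restriction, and there is no evident relation between that count and the number of $1$-asymmetric $\nu\succ\la$ versus $1$-asymmetric $\mu\prec'\la$; the identity is genuinely about $\bb{P}^{1}_{\asym}$ and is not a shadow of the dual commutation relation. Your fallback (work in Frobenius coordinates) is the paper's route, but the substance is precisely what remains unwritten: one must parametrise $\{\nu\in\bb{P}^{1}_{\asym}:\la\prec\nu\}$ and $\{\mu\in\bb{P}^{1}_{\asym}:\mu\prec'\la\}$ by subsets of explicit index sets $\Set^{1}_\la$ and $\Ret^{1}_\la$, check that equal-size subsets give $|\nu/\la|=|\la/\mu|$, and prove $|\Set^{1}_\la|=|\Ret^{1}_\la|$; for \eqref{eq:projection -1asym partitions} the analogous analysis yields $|\Set^{-1}_\la|=|\Ret^{-1}_\la|+1$ together with a size shift of $2$ per chosen index, which is exactly where the factor $1+x^2$ comes from. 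None of this bookkeeping appears in the proposal, and it is the heart of these two cases. Finally, your alternative of deriving the projection identities algebraically from the Littlewood identities would need the skew Littlewood identities (Theorem~\ref{thm:skew Littlewood identities}), since a projection identity must hold applied to every partition $\la$, not just $\emptyset$; the classical identities \eqref{eq:littlewood even col}--\eqref{eq:littlewood -1 asym} alone do not suffice.
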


Before proving the above theorem we show how it yields an algebraic proof of the classical Littlewood identity \eqref{eq:littlewood identity} for $\x=(x_1,\ldots,x_n)$. The other Littlewood identities follow analogously.
Using \eqref{eq:schur as ups}, we can rewrite the left hand side of \eqref{eq:littlewood identity} as
\[
\sum_{\la \in \bb{P}} s_\la(\x) = 
\sum_{\la \in \bb{P}} \left\langle U_{x_n}\cdots U_{x_1} \emptyset, \la \right\rangle 
= \pi_{\bb{P}} U_{x_n}\cdots U_{x_1} \emptyset.
\]
By the commutation relation \eqref{eq:commutation relation} and  the projection identity \eqref{eq:projection all partitions} we can further rewrite the above as
\begin{multline*}
\pi_{\bb{P}} U_{x_n}\cdots U_{x_1} \emptyset =
\frac{1}{1-x_n}\pi_{\bb{P}} D_{x_n} U_{x_{m-1}}\cdots U_{x_1} \emptyset 
\\=
\frac{1}{1-x_n} \cdot \frac{1}{1-x_{n-1}x_n}\cdot \pi_{\bb{P}} U_{x_{n-1}} D_{x_n} U_{x_{n-2}}\cdots U_{x_1} \emptyset 
=  \cdots  \\=
\frac{1}{1-x_n} \prod_{i=1}^{n-1}\frac{1}{1-x_ix_n} \cdot \pi_{\bb{P}} U_{x_{n-1}}\cdots U_{x_1} D_{x_n}  \emptyset 
\\ =
\frac{1}{1-x_n} \prod_{i=1}^{n-1}\frac{1}{1-x_ix_n} \cdot \pi_{\bb{P}} U_{x_{n-1}}\cdots U_{x_1} \emptyset
=  \cdots  
=
\prod_{i=1}^n \frac{1}{1-x_i} \prod_{1 \leq i < j \leq n} \frac{1}{1-x_i x_j},
\end{multline*}
where we used $D_x \emptyset = \emptyset$.\bigskip

As for the commutation relations, we prove Theorem~\ref{thm:projection identities} bijectively. In particular we relate the equations \eqref{eq:projection all partitions} and \eqref{eq:projection even partitions} to special cases of \eqref{eq:commutation relation}, and provide explicit bijections for the equations \eqref{eq:projection 1asym partitions} and \eqref{eq:projection -1asym partitions}. The equation \eqref{eq:projection even col partitions} on the other hand turns out to be ``trivial''.
For a partition $\la$, an integer $k$ and a subset $X$ of partitions we define the sets
\begin{align*}
\mc{U}_X(\la,k) &:= \{ \nu \in X: \nu \succ \la, |\nu /\la| = k\}, \\
\mc{D}_X(\la,k) &:= \{ \mu \in X: \mu \prec \la, |\la/\mu| = k\}, \\
\mc{D}_X^*(\la,k) &:= \{ \mu \in X: \mu \prec^\prime \la, |\la/\mu| = k\}.
\end{align*}
The projection identities \eqref{eq:projection all partitions}, \eqref{eq:projection even partitions}, and \eqref{eq:projection even col partitions} are equivalent to the respective system of equations
\[
\left| \mc{U}_{\bb{P}}(\la,k) \right| = \sum_{i=0}^k \left| \mc{D}_{\bb{P}}(\la,i) \right|, \qquad
\left| \mc{U}_{\bb{P}_e}(\la,k) \right| = \sum_{i=0}^{\left\lfloor \frac{k}{2}\right\rfloor} \left| \mc{D}_{\bb{P}_e}(\la,k-2i) \right|, \qquad
\left| \mc{U}_{\bb{P}_e^\prime}(\la,k) \right| = \left| \mc{D}_{\bb{P}_e^\prime}(\la,k) \right|,
\]
and the identities \eqref{eq:projection 1asym partitions} and  \eqref{eq:projection -1asym partitions} are equivalent to the respective system of equations
\[
\left| \mc{U}_{\bb{P}_\asym^{1}}(\la,k) \right| =\left| \mc{D}_{\bb{P}_\asym^{1}}(\la,k) \right|, \qquad\qquad
\left| \mc{U}_{\bb{P}_\asym^{-1}}(\la,k) \right| = \left| \mc{D}_{\bb{P}_\asym^{-1}}(\la,k) \right| + \left| \mc{D}_{\bb{P}_\asym^{-1}}(\la,k-2) \right|,
\]
for all partitions $\la$ and non-negative integers $k$.

\subsubsection{Proof of \eqref{eq:projection all partitions}}
\label{sec:proof all partitions}
Let $\F_\bullet$ be a set  of local growth rules. It is immediate that $\mc{D}_\bb{P}(\la,k)=\mc{D}(\la,\la,k)$ and $\mc{U}_\bb{P}(\la,k)=\mc{U}(\la,\la,k)$ holds for all partitions $\la$ and integers $k$. Hence the map $\F_{\bb{P},\la,k}$
\begin{equation}
\label{eq:natural P rule}
\F_{\bb{P},\la,k}: \bigcup_{i \leq k} \mc{D}_\bb{P}(\la,i) \rightarrow \mc{U}_\bb{P}(\la,k):\quad
\mu \mapsto \F_{\la,\la,k}(\mu).
\end{equation}
is well defined and a bijection, which  yields a bijective proof of \eqref{eq:projection all partitions}.

\subsubsection{Proof of \eqref{eq:projection even partitions}}
\label{sec:proof even partitions}
For a partition $\la=(\la_1,\ldots, \la_n)$ define the partitions $\la^-,\la^+$ as
\[
\la^-=\left(\left\lfloor \frac{\la_1}{2} \right\rfloor, \ldots, \left\lfloor \frac{\la_n}{2} \right\rfloor\right),
\qquad
\la^+=\left(\left\lceil \frac{\la_1}{2} \right\rceil, \ldots, \left\lceil \frac{\la_n}{2} \right\rceil\right),
\]
and write $\odd(\la)$ for the number of odd parts of $\la$.
One can verify easily that the maps
\begin{align*}
\mc{D}(\la^-,\la^+,k) \rightarrow \mc{D}_{\bb{P}_e}(\la,2k+\odd(\la)): \quad
(\mu_1,\ldots,\mu_n) \mapsto (2\mu_1,\ldots,2\mu_n),\\
\mc{U}(\la^-,\la^+,k) \rightarrow \mc{U}_{\bb{P}_e}(\la,2k+\odd(\la)): \quad
(\nu_1,\ldots,\nu_n) \mapsto (2\nu_1,\ldots,2\nu_n),
\end{align*}
are bijections; compare with Figure~\ref{fig:even partitions bij}. By abuse of notation, we call both bijections $\varphi$.
For a given set of local growth rules $\F_\bullet$, we define the map $\F_{\bb{P}_e,\la,2k+\odd(\la)}$ as
\begin{equation}
\label{eq:natural Pe rule}
\F_{\bb{P}_e,\la,2k+\odd(\la)}= \varphi \circ \F_{\la^-,\la^+,k} \circ \varphi^{-1}. 
\end{equation}
It is immediate by the above considerations, that $\F_{\bb{P}_e,\la,2k+\odd(\la)}$ is a bijection between the sets $\bigcup_{i\leq k} \mc{D}_{\bb{P}_e}(\la,2i+\odd(\la))$ and $\mc{U}_{\bb{P}_e}(\la,2k+\odd(\la))$ and hence yields a bijective proof of \eqref{eq:projection even partitions}.


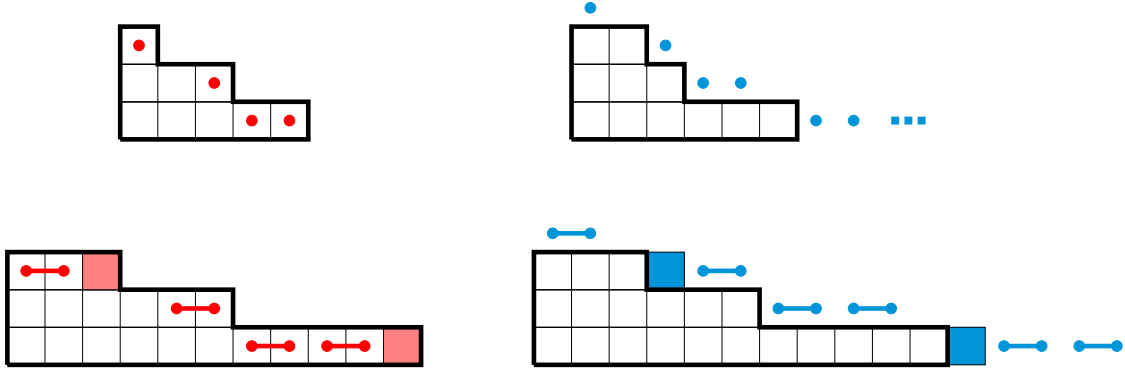
\begin{figure}[h]
\begin{center}
\begin{tikzpicture}

\begin{scope}[yshift=-3cm]
\tyng(0,0,11,6,3)
{\Yred
\tgyoung(0cm,0cm,::::::::::;,,::;)
}
\draw[line width=1.75pt] (0,0) -- (5.5,0) -- (5.5,.5) -- (3,.5) -- (3,1) -- (1.5,1) -- (1.5,1.5) -- (0,1.5) -- (0,0);
\draw[fill, red] (4.75,.25) circle (2pt);
\draw[fill, red] (4.25,.25) circle (2pt);
\draw[fill, red] (3.75,.25) circle (2pt);
\draw[fill, red] (3.25,.25) circle (2pt);
\draw[fill, red] (2.75,.75) circle (2pt);
\draw[fill, red] (2.25,.75) circle (2pt);
\draw[fill, red] (.75,1.25) circle (2pt);
\draw[fill, red] (.25,1.25) circle (2pt);
\draw[line width=1.75pt, red] (4.25,.25) -- (4.75,.25);
\draw[line width=1.75pt, red] (3.25,.25) -- (3.75,.25);
\draw[line width=1.75pt, red] (2.25,.75) -- (2.75,.75);
\draw[line width=1.75pt, red] (.25,1.25) -- (.75,1.25);

\begin{scope}[xshift=7cm]
\tyng(0,0,11,6,3)
{\Ycyan
\tgyoung(0cm,0cm,:::::::::::;,,:::;)
}
\draw[line width=1.75pt] (0,0) -- (5.5,0) -- (5.5,.5) -- (3,.5) -- (3,1) -- (1.5,1) -- (1.5,1.5) -- (0,1.5) -- (0,0);

\draw[fill, myblue] (7.25,.25) circle (2pt);
\draw[fill, myblue] (7.75,.25) circle (2pt);
\draw[fill, myblue] (6.25,.25) circle (2pt);
\draw[fill, myblue] (6.75,.25) circle (2pt);
\draw[fill, myblue] (4.75,.75) circle (2pt);
\draw[fill, myblue] (4.25,.75) circle (2pt);
\draw[fill, myblue] (3.75,.75) circle (2pt);
\draw[fill, myblue] (3.25,.75) circle (2pt);
\draw[fill, myblue] (2.75,1.25) circle (2pt);
\draw[fill, myblue] (2.25,1.25) circle (2pt);
\draw[fill, myblue] (.75,1.75) circle (2pt);
\draw[fill, myblue] (.25,1.75) circle (2pt);
\draw[line width=1.75pt, myblue] (7.25,.25) -- (7.75,.25);
\draw[line width=1.75pt, myblue] (6.25,.25) -- (6.75,.25);
\draw[line width=1.75pt, myblue] (4.25,.75) -- (4.75,.75);
\draw[line width=1.75pt, myblue] (3.25,.75) -- (3.75,.75);
\draw[line width=1.75pt, myblue] (2.25,1.25) -- (2.75,1.25);
\draw[line width=1.75pt, myblue] (.25,1.75) -- (.75,1.75);
\end{scope}
\end{scope}

\begin{scope}[xshift=1.5cm]
\tyng(0,0,5,3,1)
\draw[line width=1.75pt] (0,0) -- (2.5,0) -- (2.5,.5) -- (1.5,.5) -- (1.5,1) -- (.5,1) -- (.5,1.5) -- (0,1.5) -- (0,0);
\draw[fill, red] (2.25,.25) circle (2pt);
\draw[fill, red] (1.75,.25) circle (2pt);
\draw[fill, red] (1.25,.75) circle (2pt);
\draw[fill, red] (.25,1.25) circle (2pt);
\end{scope}

\begin{scope}[xshift=7.5cm]
\tyng(0,0,6,3,2)
\draw[line width=1.75pt] (0,0) -- (3,0) -- (3,.5) -- (1.5,.5) -- (1.5,1) -- (1,1) -- (1,1.5) -- (0,1.5) -- (0,0);

\draw[line width=3pt, dotted, myblue] (4.25,.25) -- (4.75,.25);

\draw[fill, myblue] (3.75,.25) circle (2pt);
\draw[fill, myblue] (3.25,.25) circle (2pt);
\draw[fill, myblue] (2.25,.75) circle (2pt);
\draw[fill, myblue] (1.75,.75) circle (2pt);
\draw[fill, myblue] (1.25,1.25) circle (2pt);
\draw[fill, myblue] (.25,1.75) circle (2pt);
\end{scope}
\end{tikzpicture}
\caption{\label{fig:even partitions bij} On the top row are the partitions $\la^-$ and $\la^+$ for $\la=(11,6,3)$, where the boxes lying in an inner (resp., outer) horizontal ribbon are marked by dots. The bottom row contains the corresponding images under $\varphi$, where the boxes which have to be removed (resp. added) to obtain $\la$ are filled in red (resp. blue) and single dots are mapped to pairs of dots.}
\end{center}
\end{figure}

\subsubsection{Proof of \eqref{eq:projection even col partitions}}
Let $\la$ be a partition. Then both sets $\mc{U}_{\bb{P}_e^\prime}(\la,k)$ and $\mc{D}_{\bb{P}_e^\prime}(\la,k)$ are empty unless $k = \odd(\la^\prime)$ in which case $\mc{U}_{\bb{P}_e^\prime}(\la,k)$ consists of the partition $\nu$ obtained from $\la$ by adding to each odd column a cell, and $\mc{D}_{\bb{P}_e^\prime}(\la,k)$ consists of the partition $\mu$ obtained from $\la$ by removing a cell of each odd column. We denote by $\F_{\bb{P}_e^\prime,\la,\odd(k)}$ the bijection from $\{\mu\}$ to $\{\nu\}$.

\subsubsection{Proof of \eqref{eq:projection 1asym partitions}}
\begin{figure}
\begin{center}
\begin{tikzpicture}

\tyng(0,0,8,7,6,6,5,5,3,2,2,1,1)
{\Yred
\tgyoung(0cm,0cm,,,,:::::;,,,,,:;,;,;)
}
{\Ygray
\tgyoung(0cm,0cm,;,:;,::;,:::;,::::;)
}
\node at (3.25,.75) {\color{red}{$2$}};
\node at (2.75,1.25) {\color{red}{$3$}};
\node at (2.25,2.25) {\color{red}{$5$}};
\node at (2.25,2.75) {\color{red}{$5$}};
\node at (1.25,3.25) {\color{red}{$3$}};
\node at (.75,3.75) {\color{red}{$2$}};
\draw[line width=1.75pt] (0,0) -- (4,0) -- (4,.5) -- (3.5,.5) -- (3.5,1) -- (3,1) -- (3,2) -- (2.5,2) -- (2.5,3) -- (1.5,3) -- (1.5,3.5) -- (1,3.5) -- (1,4.5) -- (.5,4.5) -- (.5,5.5) -- (0,5.5) -- (0,0);

\begin{scope}[xshift=7cm]
\tyng(0,0,8,7,6,6,5,5,3,2,2,1,1)
{\Ycyan
\tgyoung(0cm,0cm,::::::::;;,:::::::;,,,,,:::;)
}
{\Ygray
\tgyoung(0cm,0cm,;,:;,::;,:::;,::::;)
}
\node at (5.25,.25) {\color{myblue}{$1$}};
\node at (3.25,1.25) {\color{myblue}{$3$}};
\node at (2.75,2.25) {\color{myblue}{$5$}};
\node at (2.25,3.25) {\color{myblue}{$5$}};
\node at (1.25,3.75) {\color{myblue}{$3$}};
\node at (.25,5.75) {\color{myblue}{$1$}};
\draw[line width=1.75pt] (0,0) -- (4,0) -- (4,.5) -- (3.5,.5) -- (3.5,1) -- (3,1) -- (3,2) -- (2.5,2) -- (2.5,3) -- (1.5,3) -- (1.5,3.5) -- (1,3.5) -- (1,4.5) -- (.5,4.5) -- (.5,5.5) -- (0,5.5) -- (0,0);
\end{scope}
\end{tikzpicture}
\caption{\label{fig:ex for 1 asym} On the left a sketch of all $\mu \in \bb{P}^{1}_{\asym}$ with $\mu \prec^\prime \la=(7,5,3,2,0|10,7,4,2,1)$, where the boxes along the diagonal are marked gray, the boxes which have to be removed are in red and a box labelled $i$ is removed if $i \in \Ret^{1}_\la(\mu)$.
On the right a sketch of all $\nu \in \bb{P}^{1}_{\asym}$ with $\nu\succ \la$, where boxes are in blue if they have to be added and a label $i$ marks a box which is added if $i\in \Set^{1}_\la(\nu)$.
}
\end{center}
\end{figure}
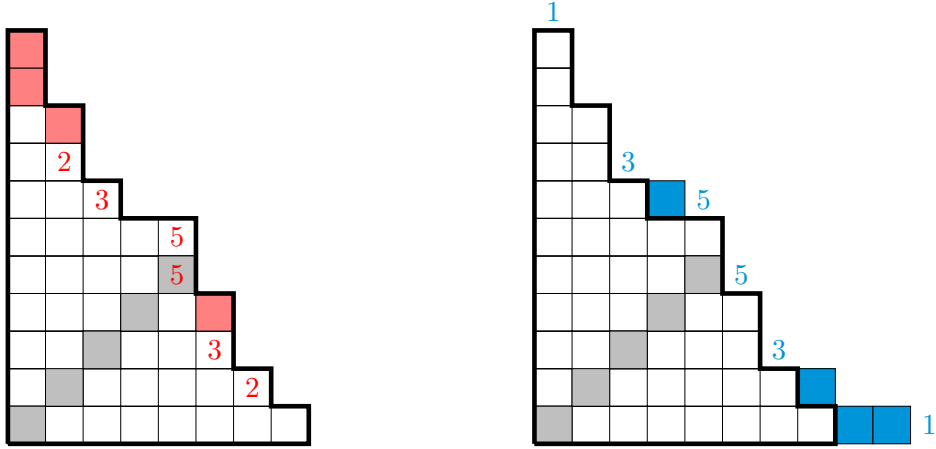
Let $\la \prec \nu$ be two partitions with Frobenius notations $\la=(a_1,\ldots,a_l|b_1,\ldots, b_l)$ and $\nu = (c_1,\ldots,c_m|c_1+1,\ldots,c_m+1)$, i.e., $\nu \in \bb{P}^{1}_{\asym}$. First observe that $l=m$ holds, otherwise we would need to add at least two cells to $\la$ in column $l+1$ which contradicts $\la \prec \nu$. The condition $\la \prec \nu$ is equivalent to the two families of inequalities
\[
c_i \geq a_i \geq c_{i+1} +1 \qquad c_i+1 \geq b_i \geq c_i,,
\]
for all $1 \leq i \leq l$ where we set $c_{l+1}=-1$. Hence the existence of the partition $\nu$ implies 
\begin{equation}
\label{eq:interlacing condition 1 asym}
b_1\geq a_1 \geq b_2 \geq \cdots \geq b_l \geq a_l \qquad \Leftrightarrow \qquad (a_1,\ldots,a_l) \prec (b_1,\ldots,b_l).
\end{equation}
Since both $(a_1,\ldots,a_l)$ and $(b_1,\ldots,b_l)$ are strict partitions, \eqref{eq:interlacing condition 1 asym} is actually equivalent to the existence of a partition $\nu$ as above.
Define $\Set^{1}_\la:=\{i: a_{i-1} > b_i > a_i \}$, where we set $a_0=\infty$. Assume that \eqref{eq:interlacing condition 1 asym} holds, then each $\nu \in \bb{P}^{1}_{\asym}$ with $\la \prec \nu$ corresponds to a subset of $\Set_\la^{1}$ as follows. First, note that $b_i=a_i$ implies $c_i=b_i$, and $b_i=a_{i-1}$ implies $c_i=b_i-1$ for all $i$, compare with Figure~\ref{fig:ex for 1 asym}. On the other hand for $i\in \Set_\la^{1}$ we can have either $c_i=b_i-1$ or $c_i=b_i$. We define $\Set^{1}_\la(\nu)$ as the set of $i\in \Set_\la^{1}$ such that $c_i=b_i$. It is immediate, that $\nu \mapsto \Set^{1}_\la(\nu)$ is a bijection between all $\nu$ of the above form and all subsets of $\Set_\la^{1}$.

Let $\mu \prec^\prime \la$ where $\mu$ has Frobenius notation $(d_1,\ldots,d_m|d_1+1,\ldots,d_m+1)$. Observe that this implies $m \in \{l,l-1\}$. In the case $m=l-1$ it is convenient to define $d_l=-1$. The condition $\mu \prec^\prime \la$  is equivalent to the family of inequalities
\[
d_i+1 \geq a_i \geq d_i, \qquad d_{i-1} \geq b_i \geq d_{i} +1,
\]
where we set $d_0=\infty$. As before, the existence of $\mu$ is therefore equivalent to \eqref{eq:interlacing condition 1 asym}.  Denote by $\Ret_\la^{1} :=\{i: b_{i+1} < a_i <b_i\}$, where we set $b_{l+1}=-1$. Each $\mu \in \bb{P}^{1}_{\asym}$ with $\mu \prec^\prime \la$ corresponds exactly to one subset of $\Ret_\la^{1}$ as shown next.  First, $a_i=b_i$ implies $d_i=a_i-1$, and $a_i=b_{i+1}$ implies $d_i=a_i$. On the other hand for $i\in \Ret_\la^{1}$ we have the two choices $d_i=a_i$ or $d_i=a_i-1$. Denote by $\Ret_\la^{1}(\mu)$ the set of all $i\in \Ret_\la^{1}$ such that $d_i=a_i-1$. It is easy to see that the map $\mu \mapsto \Ret^{1}_\la(\mu)$ is a bijection between all $\mu$ of the above form and all subsets of $\Ret_\la^{1}$.\\

It is not difficult to see, for example by comparing with Figure~\ref{fig:ex for 1 asym}, that $|\nu / \la|= |\la/\mu|$ if $|\Set^{1}_\la(\nu)| = |\Ret^{1}_\la(\mu)|$. Hence it suffices to show that $|\Set^{1}_\la|= |\Ret^{1}_\la|$ in order to prove \eqref{eq:projection 1asym partitions}.
We place the sequence $b_1,a_1,b_2,a_2,\ldots$ in a zigzag pattern and connect two integers if they are equal. For $(a_i)_i=(7,5,3,2,0)$ and $(b_i)_i=(10,7,4,2,1)$ this is shown next.
\begin{center}
\begin{tikzpicture}[scale=.75]
\node at (0,0) {$10$};
\node at (2,0) {$7$};
\node at (4,0) {$4$};
\node at (6,0) {$2$};
\node at (8,0) {$1$};
\node at (1,1) {$7$};
\node at (3,1) {$5$};
\node at (5,1) {$3$};
\node at (7,1) {$2$};
\node at (9,1) {$0$};
\draw (6.25,.25) -- (6.75,.75);
\draw (1.25,.75) -- (1.75,.25);
\end{tikzpicture}
\end{center}
Since $\Ret_\la^{1}$ corresponds to the numbers in the top row which are not connected, and $\Set_\la^{1}$ corresponds to the numbers in the bottom row which are not connected, it is immediate that $|\Set_\la^{1}|=|\Ret_\la^{1}|$. \bigskip

We define a natural bijection between the sets $\mc{D}_{\bb{P}_{\asym}^{1}}(\la,k)$ and $\mc{U}_{\bb{P}_{\asym}^{1}}(\la,k)$ denoted by $\F_{\bb{P}_{\asym}^{1},\la,k}^{\row*}$.
 For $\Ret^{1}_\la=\{r_1,\ldots,r_n\}$ and $\Set^{1}_\la=\{s_1,\ldots,s_{n}\}$  we define $\F_{\bb{P}_{\asym}^{1},\la,2k}^{\row*}$ to map the partition $\mu$  with $\Ret_\la^{1}(\mu)= \{r_{i_1},\ldots,r_{i_k}\}$ to the partition $\nu$ with $\Set_\la^{1}(\nu)=\{s_{i_1},\ldots,s_{i_k}\}$.

\subsubsection{Proof of \eqref{eq:projection -1asym partitions}}
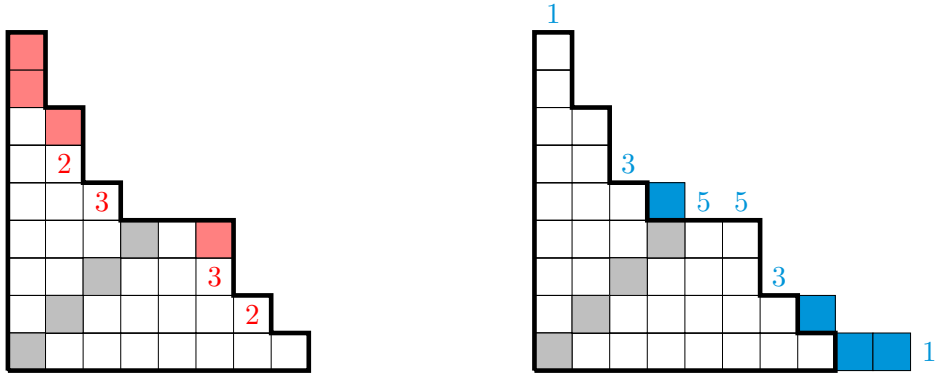
\begin{figure}[h]
\begin{center}
\begin{tikzpicture}

\tyng(0,0,8,7,6,6,3,2,2,1,1)
{\Yred
\tgyoung(0cm,0cm,,,,:::::;,,,:;,;,;)
}
{\Ygray
\tgyoung(0cm,0cm,;,:;,::;,:::;)
}
\node at (3.25,.75) {\color{red}{$2$}};
\node at (2.75,1.25) {\color{red}{$3$}};
\node at (1.25,2.25) {\color{red}{$3$}};
\node at (.75,2.75) {\color{red}{$2$}};
\draw[line width=1.75pt] (0,0) -- (4,0) -- (4,.5) -- (3.5,.5) -- (3.5,1) -- (3,1) -- (3,2) -- (1.5,2) -- (1.5,2.5) -- (1,2.5) -- (1,3.5) -- (.5,3.5) -- (.5,4.5) -- (0,4.5) -- (0,0);

\begin{scope}[xshift=7cm]
\tyng(0,0,8,7,6,6,3,2,2,1,1)
{\Ycyan
\tgyoung(0cm,0cm,::::::::;;,:::::::;,,,:::;)
}
{\Ygray
\tgyoung(0cm,0cm,;,:;,::;,:::;)
}
\node at (5.25,.25) {\color{myblue}{$1$}};
\node at (3.25,1.25) {\color{myblue}{$3$}};
\node at (2.75,2.25) {\color{myblue}{$5$}};
\node at (2.25,2.25) {\color{myblue}{$5$}};
\node at (1.25,2.75) {\color{myblue}{$3$}};
\node at (.25,4.75) {\color{myblue}{$1$}};
\draw[line width=1.75pt] (0,0) -- (4,0) -- (4,.5) -- (3.5,.5) -- (3.5,1) -- (3,1) -- (3,2) -- (1.5,2) -- (1.5,2.5) -- (1,2.5) -- (1,3.5) -- (.5,3.5) -- (.5,4.5) -- (0,4.5) -- (0,0);
\end{scope}
\end{tikzpicture}
\caption{\label{fig:ex for -1 asym}
 On the left a sketch of all $\mu \in \bb{P}^{-1}_{\asym}$ with $\mu \prec^\prime \la=(7,5,3,2|8,5,2,0)$, where the boxes along the diagonal are marked gray, the boxes which have to be removed are in red and a box labelled $i$ is removed if $i \in \Ret^{-1}_\la(\mu)$.
On the right a sketch of all $\nu \in \bb{P}^{-1}_{\asym}$ with $\nu\succ \la$, where boxes are in blue if they have to be added and a label $i$ marks a box which is added if $i\in \Set^{-1}_\la(\nu)$.}

\end{center}
\end{figure}
Let $\la=(a_1,\ldots,a_l|b_1,\ldots,b_l)$ and $\nu=(c_1+1,\ldots,c_m+1|c_1,\ldots,c_m) \in \bb{P}_\asym^{-1}$ such that $\la \prec \nu$. First observe that we have $m \in \{l,l+1\}$ and for convenience define $c_{l+1}=-\infty$ if $m=l$.
By definition, the condition $\la \prec \nu$ is equivalent to the two families of inequalities
\[
c_{i}+1 \geq a_i \geq c_{i+1}+2, \qquad c_i \geq b_i \geq c_i-1,
\]
for all $1 \leq i \leq l$. Hence the existence of a partition $\nu$ as above is equivalent to
\begin{equation}
\label{eq:interlacing condition -1 asym}
b_1+2\geq a_1 \geq b_2+2 \geq \cdots \geq b_l+2 \geq a_l \qquad \Leftrightarrow \qquad (a_1,\ldots,a_l) \prec (b_1+2,\ldots,b_l+2),
\end{equation} 
where the equivalence holds since $(a_1,\ldots,a_l)$ and $(b_1,\ldots,b_l)$ are strict partitions.
Define $\Set_\la^{-1} = \{i \leq l+1: a_{i-1} > b_i+2 > a_{i}\}$, where we set $a_0=\infty$, $b_{l+1}=-1$ and $a_{l+1}=-\infty$. We show that each $\nu$ with $\la \prec \nu$ corresponds to a subset of $\Set_\la^{-1}$ as follows. First note that $b_i+2=a_i$ implies $c_i=b_i+1$, and $b_i+2=a_{i-1}$ implies $c_i=b_i$. On the other hand if $a_{i-1} > b_i+2 > a_{i}$ holds, we have two allowed choices for $c_i$, namely $b_i$ and $b_{i}+1$. In particular we can have $c_{l+1}=1$ if and only if $a_l\geq 2$ which is equivalent to $(l+1) \in \Set_\la^{-1}$. We define $\Set_\la^{-1}(\nu)$ as the set of $i \in \Set_\la^{-1}$ such that $c_i=b_i+1$. By the above argument it is immediate that $\nu \mapsto \Set_\la^{-1}(\nu)$ is a bijection between all $\nu$ as above and all subsets of $\Set_\la^{-1}$.

Let $\mu=(d_1+1,\ldots,d_m+1|d_1,\ldots,d_m)$  be a partition with $\mu \prec^\prime \la$. The existence of such a $\mu$ is equivalent to the family of inequalities
\[
d_{i}+2 \geq a_i \geq d_{i}+1, \qquad d_{i-1}-1 \geq b_i \geq d_i,
\]
which is again equivalent to \eqref{eq:interlacing condition -1 asym}. Denote by  $\Ret_\la^{-1} := \{ i\leq l: b_i+2 > a_i > b_{i+1}+2\}$. Each $\mu \in \bb{P}_{\asym}^{-1}$ with $\mu \prec^\prime \la$ corresponds to a subset of $\Ret_\la^{-1}$ as follows. First, the equality $a_i=b_i+2$ implies $d_i=a_i-2$, and $a_i = b_{i+1}+2$ implies $d_i=a_i-1$. Note that the case $a_i < b_{i+1}+2$ can only appear for $i=l$ and $a_l=0$ which implies $m=l-1$. For all $i \in \Ret_\la^{-1}$ we have however two possible choices for $d_i$, namely $a_i-1$ or $a_i-2$. Denote by $\Ret_\la^{-1}(\mu)$ the set of all $i \in \Ret_\la^{-1}$ such that $d_i=a_i-2$. Then the above shows that the map $\mu \mapsto \Ret_\la^{-1}(\mu)$ is a bijection between all such partitions $\mu$ and all subsets of $\Ret_\la^{-1}$.\bigskip

Denote by $\nu_0$ the partition such that $\Set_\la^{-1}(\nu_0)=\emptyset$. Then we have
\[
|\nu / \la| = |\nu_0/\la| +2 |\Set_\la^{-1}(\nu)|, \qquad
|\la / \mu| = |\nu_0/\la| +2 |\Ret_\la^{-1}(\mu)|.
\]
Hence in order to prove \eqref{eq:projection -1asym partitions} it suffices to show $|\Set_\la^{-1}| = |\Ret_\la^{-1}|+1$.
Similar to the previous subsection we place the sequence $b_1+2,a_1,b_2+2,a_2,\ldots,b_l+2,a_l,b_{l+1}+2$ in a zigzag pattern and connect two integers if they are equal and connect $a_l$ and $b_{l+1}+2$ if $a_l\leq b_{l+1}+2=1$. For $(a_i)_i=(7,5,3,2)$ and $(b_i)_i=(8,5,2,0,-1)$ this is shown next.
\begin{center}
\begin{tikzpicture}[scale=.75]
\node at (0,0) {$10$};
\node at (2,0) {$7$};
\node at (4,0) {$4$};
\node at (6,0) {$2$};
\node at (8,0) {$1$};
\node at (1,1) {$7$};
\node at (3,1) {$5$};
\node at (5,1) {$3$};
\node at (7,1) {$2$};
\draw (6.25,.25) -- (6.75,.75);
\draw (1.25,.75) -- (1.75,.25);
\end{tikzpicture}
\end{center}
Again the unconnected numbers in the top row correspond to the elements in $\Ret_\la^{-1}$ and the unconnected numbers in the bottom row correspond to the elements of $\Set_\la^{-1}$, which proves the claim.\bigskip

We define two natural bijections between the sets  $\mc{D}_{\bb{P}_{\asym}^{-1}}(\la,2k) \cup \mc{D}_{\bb{P}_{\asym}^{-1}}(\la,2k-2)$ and $\mc{U}_{\bb{P}_{\asym}^{-1}}(\la,2k)$ denoted by $\F_{\bb{P}_{\asym}^{-1},\la,2k}^{\row*}$ and $\F_{\bb{P}_{\asym}^{-1},\la,2k}^{\col*}$.  For $\Ret^{-1}_\la=\{r_1,\ldots,r_n\}$ and $\Set^{-1}_\la=\{s_0,\ldots,s_{n}\}$ these are defined as
\[
\F_{\bb{P}_{\asym}^{-1},\la,2k}^{\row*}:
\{r_{i_1},\ldots,r_{i_j}\} \mapsto
\begin{cases}
\{s_{i_1},\ldots,s_{i_j}\} \quad &\text{if }j=k,\\
\{s_{0},s_{i_1},\ldots,s_{i_j}\} &\text{if }j=k-1,
\end{cases}
\]
and
\[
\F_{\bb{P}_{\asym}^{-1},\la,2k}^{\col*}:
\{r_{i_1},\ldots,r_{i_j}\} \mapsto
\begin{cases}
\{s_{i_1-1},\ldots,s_{i_j-1}\} \quad &\text{if }j=k,\\
\{s_{i_1-1},\ldots,s_{i_j-1},s_n\} &\text{if }j=k-1,
\end{cases}
\]
where we identified the partitions $\mu,\nu$ with the corresponding sets $\Ret_\la^{-1}(\mu)$ and $\Set_\la^{-1}(\nu)$.

\subsection{Triangular growths}
\label{sec:triangular}

In this Section we adapt Fomin's growth diagrams to our framework. Instead of symmetric matrices we use ``upper triangular'' arrays.

\begin{defi}
\label{def:triangular growth}
Let $C=(c_{i,j})_{1 \leq i\leq j\leq n}$ be a triangular array with non-negative integer entries. A \deff{triangular growth associated with $C$} is an assignment  $\Lambda=(\Lambda_{i,j})$ of partitions to the vertices $(i,j)\in [0,n]\times[0,m]$ with $i \leq j$ such that 
\begin{itemize}
\item for $i<n$ holds $\Lambda_{i,j} \prec \Lambda_{i+1,j}$, i.e., $\Lambda_{i+1,j}/\Lambda_{i,j}$ is a horizontal strip,
\item for $j<m$ holds $\Lambda_{i,j} \prec \Lambda_{i,j+1}$, i.e., $\Lambda_{i,j+1}/\Lambda_{i,j}$ is a horizontal strip,
\item the size $|\Lambda_{i,j}|$ is equal to
\begin{equation}
\label{eq:triangular growth sizes}
|\Lambda_{i,j}| = \sum_{1 \leq k < l \leq i} c_{k,l} + \sum_{\substack{1 \leq k \leq i \\ k \leq l \leq j}}c_{k,l}.
\end{equation}
\end{itemize}
We say that $\Lambda$ is a \deff{triangular dual growth}, if $\Lambda_{i+1,j}/\Lambda_{i,j}$ is a vertical strip instead of a horizontal strip. Since $C$ is determined by $\Lambda$ we further omit referring to it.
\end{defi}

\begin{ex}
\label{ex:triangular growths}
The triangular array $C= \begin{array}{ccc}
0 & 0 & 1 \\ & 1 & 0 \\ && 0
\end{array}$  has four triangular growths associated to it as shown next.
\begin{center}
\begin{tikzpicture}[scale=1]
	\node at (2.5,-0.5) {$1$};
	\node at (1.5,-1.5) {$1$};
	{\footnotesize
		\node at (0,0) {$\emptyset$};
		\node at (1,0) {$\emptyset$};
		\node at (2,0) {$\emptyset$};
		\node at (3,0) {$\emptyset$};
		\node at (1,-1) {$\emptyset$};
		\node at (2,-1) {$\emptyset$};
	\Yboxdim{0.25cm}
		\node at (3,-1) {\yng(1)};
		\node at (2,-2) {\yng(1)};
		\node at (3,-2) {\yng(2)};
		\node at (3,-3) {\yng(3)};
	}
	\draw (.2,0) -- (.8,0);
	\draw (1.2,0) -- (1.8,0);
	\draw (1.2,-1) -- (1.8,-1);
	\draw (2.2,0) -- (2.8,0);
	\draw (2.2,-1) -- (2.8,-1);
	\draw (2.2,-2) -- (2.7,-2);

	\draw (1,-.2) -- (1,-.8);
	\draw (2,-.2) -- (2,-.8);
	\draw (2,-1.2) -- (2,-1.8);
	\draw (3,-.2) -- (3,-.8);
	\draw (3,-1.2) -- (3,-1.8);
	\draw (3,-2.2) -- (3,-2.8);
	
		\begin{scope}[xshift=3.75cm]
	\node at (2.5,-0.5) {$1$};
	\node at (1.5,-1.5) {$1$};
	{\footnotesize
		\node at (0,0) {$\emptyset$};
		\node at (1,0) {$\emptyset$};
		\node at (2,0) {$\emptyset$};
		\node at (3,0) {$\emptyset$};
		\node at (1,-1) {$\emptyset$};
		\node at (2,-1) {$\emptyset$};
	\Yboxdim{0.25cm}
		\node at (3,-1) {\yng(1)};
		\node at (2,-2) {\yng(1)};
		\node at (3,-2) {\yng(2)};
		\node at (3,-3) {\yng(2,1)};
	}
	\draw (.2,0) -- (.8,0);
	\draw (1.2,0) -- (1.8,0);
	\draw (1.2,-1) -- (1.8,-1);
	\draw (2.2,0) -- (2.8,0);
	\draw (2.2,-1) -- (2.8,-1);
	\draw (2.2,-2) -- (2.7,-2);

	\draw (1,-.2) -- (1,-.8);
	\draw (2,-.2) -- (2,-.8);
	\draw (2,-1.2) -- (2,-1.8);
	\draw (3,-.2) -- (3,-.8);
	\draw (3,-1.2) -- (3,-1.8);
	\draw (3,-2.2) -- (3,-2.7);
		\end{scope}
		
		\begin{scope}[xshift=7.5cm]
	\node at (2.5,-0.5) {$1$};
	\node at (1.5,-1.5) {$1$};
	{\footnotesize
		\node at (0,0) {$\emptyset$};
		\node at (1,0) {$\emptyset$};
		\node at (2,0) {$\emptyset$};
		\node at (3,0) {$\emptyset$};
		\node at (1,-1) {$\emptyset$};
		\node at (2,-1) {$\emptyset$};
	\Yboxdim{0.25cm}
		\node at (3,-1) {\yng(1)};
		\node at (2,-2) {\yng(1)};
		\node at (3,-2) {\yng(1,1)};
		\node at (3,-3) {\yng(2,1)};
	}
	\draw (.2,0) -- (.8,0);
	\draw (1.2,0) -- (1.8,0);
	\draw (1.2,-1) -- (1.8,-1);
	\draw (2.2,0) -- (2.8,0);
	\draw (2.2,-1) -- (2.8,-1);
	\draw (2.2,-2) -- (2.8,-2);

	\draw (1,-.2) -- (1,-.8);
	\draw (2,-.2) -- (2,-.8);
	\draw (2,-1.2) -- (2,-1.8);
	\draw (3,-.2) -- (3,-.8);
	\draw (3,-1.2) -- (3,-1.7);
	\draw (3,-2.3) -- (3,-2.7);
		\end{scope}

		\begin{scope}[xshift=11.25cm]
	\node at (2.5,-0.5) {$1$};
	\node at (1.5,-1.5) {$1$};
	{\footnotesize
		\node at (0,0) {$\emptyset$};
		\node at (1,0) {$\emptyset$};
		\node at (2,0) {$\emptyset$};
		\node at (3,0) {$\emptyset$};
		\node at (1,-1) {$\emptyset$};
		\node at (2,-1) {$\emptyset$};
	\Yboxdim{0.25cm}
		\node at (3,-1) {\yng(1)};
		\node at (2,-2) {\yng(1)};
		\node at (3,-2) {\yng(1,1)};
		\node at (3,-3) {\yng(1,1,1)};
	}
	\draw (.2,0) -- (.8,0);
	\draw (1.2,0) -- (1.8,0);
	\draw (1.2,-1) -- (1.8,-1);
	\draw (2.2,0) -- (2.8,0);
	\draw (2.2,-1) -- (2.8,-1);
	\draw (2.2,-2) -- (2.8,-2);

	\draw (1,-.2) -- (1,-.8);
	\draw (2,-.2) -- (2,-.8);
	\draw (2,-1.2) -- (2,-1.8);
	\draw (3,-.2) -- (3,-.8);
	\draw (3,-1.2) -- (3,-1.7);
	\draw (3,-2.3) -- (3,-2.55);
		\end{scope}
\end{tikzpicture}
\end{center}
\end{ex}

Analogously to usual growths, we associate to a triangular (dual) growth $\Lambda$ the semistandard Young tableau $P(\Lambda)$ by reading of the right-most column, i.e., the shape of $P(\Lambda)$ restricted to entries at most $i$ is $\Lambda_{i,n}$. The according tableau for the second triangular growth in the above example is
\[
P(\Lambda)=\young(12,3) \;.
\]
For a triangular (dual) growth $\Lambda$ associated to $C$ with $P=P(\Lambda)$ we write
\[
\Lambda: C \rightarrow P
\]
and say that ``$\Lambda$ is a triangular (dual) growth from $C$ to $P$''. Each full square of a triangular (dual) growth is of the form \eqref{eq:growth square} or \eqref{eq:dual growth square} respectively. The partial squares on the boundary are of the form as shown left (resp., right) 
\begin{equation}
\label{eq:partial growth square}
\begin{tikzpicture}[baseline=-3.5ex]
\draw (1.2,-.1) --node[right]{\rotatebox{-90}{$\prec$}} (1.2,-.9);
\draw (0.1,0.2) --node[above]{$\prec$} (.9,0.2);
\node at (-0.2,0.2) {$\mu$};
\node at (1.2,0.2) {$\la$};
\node at (1.2,-1.2) {$\nu$};
\node at (.5,-.5) {$c$};
\end{tikzpicture}
\quad \text{with } \mu \in \mc{D}_{\bb{P}}(\la,k-a),
\qquad\qquad
\begin{tikzpicture}[baseline=-3.5ex]
\draw (1.2,-.1) --node[right]{\rotatebox{-90}{$\prec$}} (1.2,-.9);
\draw (0.1,0.2) --node[above]{$\prec^\prime$} (.9,0.2);
\node at (-0.2,0.2) {$\mu$};
\node at (1.2,0.2) {$\la$};
\node at (1.2,-1.2) {$\nu$};
\node at (.5,-.5) {$c$};
\end{tikzpicture}
\quad \text{with } \mu \in \mc{D}_{\bb{P}}^*(\la,k-a),
\end{equation}
where $c \in \bb{N}$ and $\nu \in \mc{U}_{\bb{P}}(\la,k)$.
By extending the local (dual) growth rules as shown next, we can always choose a unique triangular growth from $C$ to $P$.

\begin{defi}
Let $X$ be a set of partitions and $I_X(\la,k) \subseteq [0,k]$ for all $\la \in \bb{P}$ and $k \in \bb{N}$. A set of \deff{local triangular growth rules} $\F_\bullet$ is a set of local growth rules $\F_\bullet$ together with a family of bijections
\[
\F_{X,\la,k}: \bigcup_{i \in I_X(\la,k)} \mc{D}_{X}(\la,i) \rightarrow  \mc{U}_{X}(\la,k),
\]
for each $\la \in \bb{P}$ and $k \in \bb{N}$. Analogously a set of \deff{local triangular dual growth rules} $\F_\bullet^*$ is a set of local dual growth rules  $\F_\bullet^*$  together with a family of bijections
\[
\F_{X,\la,k}^*: \bigcup_{i \in I_X(\la,k)} \mc{D}_{X}^*(\la,i) \rightarrow  \mc{U}_{X}(\la,k),
\]
for each $\la \in \bb{P}$ and $k \in \bb{N}$.
\end{defi}

We have already seen local triangular growth rules implicitly in the previous section. For example Section~\ref{sec:proof all partitions} considers the case $X=\bb{P}$ and $I_{\bb{P}}(\la,k)=[0,k]$ and Section~\ref{sec:proof even partitions} the case $X=\bb{P}_e$ and $I_{\bb{P}_e}(\la,2k+\odd(\la))=\{\odd(\la),\odd(\la)+2,\ldots,\odd(\la)+2k\}$ and $I_{\bb{P}_e}(\la,2k+1+\odd(\la))=\emptyset$.\medskip

Given a set of local triangular (dual) growth rules $\F_\bullet$, we call a triangular (dual) growth $\Lambda$ a \deff{triangular (dual) $\F_\bullet$-growth diagram} if each full square satisfies $\nu = \F_{\la,\rho,|(\la\cap\rho)/\mu|+a}(\mu)$ (resp., $\nu = \F^*_{\la,\rho,|(\la\cap\rho)/\mu|+a}(\mu)$), where $\mu,\la,\rho,\nu,a$ are defined as in \eqref{eq:growth square} (resp., \eqref{eq:dual growth square}) and each partial square satisfies $\nu=\F_{X,\la,|\la/\mu|+c}$ (resp., $\nu=\F^*_{X,\la,|\la/\mu|+c}$) where $\la,\mu,\nu,c$ are defined as on the left (resp., right) of \eqref{eq:partial growth square}. As in Section~\ref{sec:growths} the important observation is that each set of local triangular (dual) growth rules $\F_\bullet$ yields a bijection between the set of arrays $C=(c_{i,j})_{1 \leq i\leq j \leq n}$ where the non-diagonal entries are either in $\bb{N}$ or $\{0,1\}$ and the diagonal entries are either $0$, in $2\bb{N}$ or in $\bb{N}$ respectively\footnote{The allowed values for the entries of $C=(c_{i,j})$ depend on the explicit situation, i.e., which of the Littlewood identities we want to prove.}, and the set of SSYTs with shapes in $X$ and entries at most $n$. Given an array $C$ we construct a triangular (dual) $\F_\bullet$-growth diagram as before by using the local triangular (dual) growth rules.\\

Similarly to Section \ref{sec:growths} we can also translate the local triangular (dual) growth rules $\F_\bullet$ into an $\F_\bullet$-insertion algorithm. Let $C$ be a triangular array as before, denote by $T$ the tableau obtained after inserting the first $i-1$ columns and $\wh{T}$ the tableau obtained by inserting the $i$-th column of $C$ into $T$. Then $\wh{T}$ is obtained by the following two steps.
%
%
%
%
%
\begin{enumerate}
\item We $\F_\bullet$-insert the first $i-1$ entries of the $i$-th column as defined in Lemma~\ref{lem:insertion algorithm} or \cite[Lemma 2.10]{FriedenSchreierAigner24plus} to obtain $\wh{T}$ restricted to entries at most $i-1$.
\item We insert an entry $i$ in all cells of $\F_{X,\wh{T}^{(i-1)},k}(T^{(i-1)})/T^{(i-1)}$, where 
\[
k=\left|T^{(i-1)}/\wh{T}^{(i-1)}\right|+A_{i,i}=\sum_{j=1}^iA_{j,i}.
\]
\end{enumerate}

\begin{rem}
We can use the above translation to insertion algorithms to relate our growth diagram proofs to well known bijections.
\begin{enumerate}
\item The Littlewood identity \eqref{eq:littlewood even col} is proved by Burges ``INSERT1'' algorithm \cite[Section 2]{Burge74}. Let $(u_k,v_k),(u_{k+1},v_{k+1}),\ldots,(u_l,v_l)$ be all pairs of the two-line array with $i=u_k=\cdots=u_l$. It follows by the proof of the theorem of Section 2 of \cite[p. 20]{Burge74} that none of the $u_j$ will be bumped when inserting these pairs by ``INSERT1''. This implies that we can first row insert the $v_j$ and then add all $u_j$, i.e. the entry $i$, at once. Hence this algorithm is equivalent to using triangular growths where we use row insertion for all local growth rules.

\item  The Littlewood identity \eqref{eq:littlewood identity} is usually obtained from the symmetry of RSK, see \eqref{eq:sym of RSK}. 
 Since the bijection $\F_{\bb{P},\la,k}$ is defined by $\F_{\bb{P},\la,k} = \F_{\la,\la,k}$, it follows that the triangular $\F_\bullet$-growth diagrams are equal to the upper half of a regular growth diagram of a symmetric matrix. In particular this covers the classical proof via row or column insertion.

\item Burge argues in \cite[Section 3]{Burge74} that the Littlewood identity \eqref{eq:littlewood even row} can be obtained by applying column insertion, which he calls ``INSERT*'', to a symmetric matrix with even diagonal entries, or equivalently by a simplified insertion algorithm called ``INSERT2''. Since the definition of $\F_{\bb{P}_e,\la,k}$ is based on the local growth rules $\F_\bullet$ we obtain the same correspondence by the above argument by using triangular growths where we choose column insertion $\F_\bullet^\col$. 

\item The Littlewood identity  \eqref{eq:littlewood 1 asym} is proved by Burges ``INSERT3'' algorithm. Let $(u_k,v_k)$, $(u_{k+1},v_{k+1})$,$\ldots,$ $(u_l,v_l)$ be all pairs of the two-line array with $i=u_k=\cdots=u_l$. Since $i>v_j$ for all $k\leq j \leq l$ no $i$ entry is bumped while inserting these pairs via ``INSERT3''. Hence we can equivalently first insert $v_k,v_{k+1},\ldots,v_l$ as defined by ``INSERT3'' and then insert all $i$ entries. By definition, inserting $v_k,v_{k+1},\ldots,v_l$ via ``INSERT3'' corresponds to the dual row insertion $\F_{\bullet}^{\row*}$ since dual row insertion is reverse traceable. It is immediate by the definition of ``INSERT3'' that inserting  all $i$ entries is equivalent to the definition of $\F_{\bb{P}_\asym^{1},\la,k}^{\row*}$. Hence ``INSERT3'' is equivalent to using triangular dual growths with dual row insertion $\F_{\bullet}^{\row*}$ and  $\F_{\bb{P}_\asym^{1},\la,k}^{\row*}$ respectively.

\item Burge proves the Littlewood identity \eqref{eq:littlewood -1 asym} by the ``INSERT4'' algorithm. As above, we can for each column first insert the $v_j$ entries and then the $u_j$. By definition of ``INSERT4'' the $v_j$ are inserted by dual column insertion. Furthermore it is immediate that inserting the $u_j$ entries is equal to applying  $\F_{\bb{P}_\asym^{-1},\la,k}^{\row*}$. Hence Burge's ``INSERT4'' algorithm is equivalent to using triangular dual growth diagrams together with dual column insertion $\F_{\bullet}^{\col*}$ and $\F_{\bb{P}_\asym^{-1},\la,k}^{\row*}$.
\end{enumerate}
\end{rem}

Similar to Section~\ref{sec:skew} we can extend  the framework of triangular (dual) growth diagrams to obtain bijective proofs for the following skew Littlewood identities, compare also to \cite[Ch. I, Section 5, Ex. 27]{Macdonald95}.

\begin{thm}
\label{thm:skew Littlewood identities}
Let $\x=(x_1,\ldots,x_n)$ be a sequence of variables and let $\la$ be a partition. Then,
\begin{align}
\label{eq:skew littlewood even col}
\sum_{\nu \in \bb{P}_e^\prime} s_{\nu/\la}(\x) &= \prod_{1 \leq i<j \leq n}\frac{1}{1-x_ix_j}\sum_{\mu \in \bb{P}_e^\prime} s_{\la/\mu}(\x),\\
\label{eq:skew littlewood identity}
\sum_{\nu \in \bb{P}} s_{\nu/\la}(\x) &= \prod_{i=1}^n\frac{1}{1-x_i}\prod_{1\leq i<j \leq n}\frac{1}{1-x_ix_j} \sum_{\mu \in \bb{P}} s_{\la/\mu}(\x),\\
\label{eq:skew littlewood even row}
\sum_{\nu \in \bb{P}_e} s_{\nu/\la}(\x) &= \prod_{i=1}^n\frac{1}{1-x_i^2}\prod_{1\leq i<j \leq n}\frac{1}{1-x_ix_j}\sum_{\mu \in \bb{P}_e} s_{\la/\mu}(\x),\end{align}
\begin{align}
\label{eq:skew littlewood 1 asym}
\sum_{\nu \in \bb{P}_\asym^{1}} s_{\nu/\la}(\x) &=\prod_{1 \leq i<j \leq n}(1+x_ix_j)\sum_{\mu \in \bb{P}_\asym^{-1}} s_{\la^\prime/\mu}(\x), \\
\label{eq:skew littlewood -1 asym}
\sum_{\nu \in \bb{P}_\asym^{-1}} s_{\nu/\la}(\x) &=\prod_{i=1}^n(1+x_i^2)\prod_{1 \leq i<j \leq n}(1+x_ix_j)\sum_{\mu \in \bb{P}_\asym^{1}} s_{\la^\prime/\mu}(\x). 
\end{align}
\end{thm}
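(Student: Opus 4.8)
The plan is to prove Theorem~\ref{thm:skew Littlewood identities} by the same two-step strategy used for the classical Littlewood identities in Section~\ref{sec:projection identities} and Section~\ref{sec:triangular}, the only change being that the chain of up operators now starts at an arbitrary partition $\la$ instead of $\emptyset$. For the algebraic half, I would use the skew expansion \eqref{eq:skew schur as ups} to rewrite the left-hand side of each identity as $\pi_X U_{x_n}\cdots U_{x_1}\la$, where $X$ is the relevant family of partitions ($\bb{P}$, $\bb{P}_e$, $\bb{P}_e^\prime$, $\bb{P}_\asym^{1}$ or $\bb{P}_\asym^{-1}$). Then, exactly as in the computation following Theorem~\ref{thm:projection identities}, I would apply the appropriate projection identity from Theorem~\ref{thm:projection identities} to the leftmost operator to replace $U_{x_n}$ by $D_{x_n}$ (resp.\ $D_{x_n}^{*}$), collecting the one-variable factor, and commute that (dual) down operator to the right past $U_{x_{n-1}},\dots,U_{x_1}$ via \eqref{eq:commutation relation} (resp.\ \eqref{eq:dual commutation relation}), collecting $\prod_{i<n}(1-x_ix_n)^{-1}$ (resp.\ $\prod_{i<n}(1+x_ix_n)$). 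Iterating over $U_{x_{n-1}},U_{x_{n-2}},\dots$ removes all up operators and produces precisely the product on the right-hand side, leaving $\pi_X D_{x_1}\cdots D_{x_n}\la$ in the three non-asymmetric cases and $\pi_X D_{x_1}^{*}\cdots D_{x_n}^{*}\la$ in the two $\asym$ cases.

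It then remains to identify these residual expressions. By \eqref{eq:skew schur as downs} the first equals $\sum_{\mu\in X}s_{\la/\mu}(\x)$, which is the right-hand side of \eqref{eq:skew littlewood even col}, \eqref{eq:skew littlewood identity} and \eqref{eq:skew littlewood even row}. By \eqref{eq:skew schur as dual downs} the second equals $\sum_{\mu\in X}s_{\la^\prime/\mu^\prime}(\x)$; reindexing by $\rho=\mu^\prime$ and using that conjugation is a bijection $\bb{P}_\asym^{1}\leftrightarrow\bb{P}_\asym^{-1}$ (immediate from the Frobenius description of $k$-asymmetric partitions) rewrites this as $\sum_{\rho}s_{\la^\prime/\rho}(\x)$ with $\rho$ running over $\bb{P}_\asym^{-1}$ resp.\ $\bb{P}_\asym^{1}$, matching \eqref{eq:skew littlewood 1 asym} and \eqref{eq:skew littlewood -1 asym}. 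One subtlety worth recording is that no reordering of the down operators among themselves is needed: the procedure leaves them in exactly the order $D_{x_1}\cdots D_{x_n}$ (resp.\ $D_{x_1}^{*}\cdots D_{x_n}^{*}$) appearing in \eqref{eq:skew schur as downs} and \eqref{eq:skew schur as dual downs}.

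To turn this into explicit bijections I would extend the triangular (dual) growth diagrams of Section~\ref{sec:triangular} to skew shapes just as Section~\ref{sec:skew} extends ordinary growth diagrams. Given local triangular (dual) growth rules $\F_\bullet$, a \emph{skew triangular growth} is an assignment $\Lambda=(\Lambda_{i,j})_{0\le i\le j\le n}$ whose top row $\Lambda_{0,0}\prec\cdots\prec\Lambda_{0,n}$ (resp.\ $\Lambda_{0,0}\prec^\prime\cdots\prec^\prime\Lambda_{0,n}$ in the dual case) is the chain of a prescribed tableau $S$ with $\Lambda_{0,0}=\mu$ and $\Lambda_{0,n}=\la$, the remaining vertices being filled from a triangular array $C$ via $\F_\bullet$, with the size condition \eqref{eq:triangular growth sizes} corrected by the boundary term $|\Lambda_{0,j}|-|\Lambda_{0,0}|$ exactly as in Section~\ref{sec:skew}. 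Reading $P=P(\Lambda)$ off the rightmost column, which now runs $\Lambda_{0,n}=\la\prec\cdots\prec\Lambda_{n,n}=\nu$, yields an SSYT of skew shape $\nu/\la$, and $\Lambda\mapsto(C,S,P)$ is the desired weight-preserving bijection; it is invertible because each local rule is a bijection and the diagonal partial squares can be solved row by row starting from the prescribed top row. Conceptually this is nothing but the ``folding'' of the skew (dual) Cauchy growth diagrams of Section~\ref{sec:skew} along the main diagonal, specialised to a symmetric array and to the identifications $T=S$, $Q=P$, so the weight- and well-definedness checks reduce to those already made there and in Section~\ref{sec:triangular}.

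The genuinely new input is small, and the one place I expect to need care is the asymmetric pair \eqref{eq:skew littlewood 1 asym}, \eqref{eq:skew littlewood -1 asym}, where the right-hand side carries $s_{\la^\prime/\mu}$ rather than $s_{\la/\mu}$. Here one must dovetail the conjugation bijection on asymmetric partitions used in the proofs of \eqref{eq:projection 1asym partitions} and \eqref{eq:projection -1asym partitions} with the conjugation identifying SSYTs of shape $\sigma$ with dual SSYTs of shape $\sigma^\prime$, and verify that in the dual triangular growth it is the direction recording the input tableau $S$ that carries vertical strips, so that $P(\Lambda)$ stays a genuine SSYT of shape $\nu/\la$ as demanded by the left-hand side. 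Once this bookkeeping is fixed, everything else — weight preservation, uniqueness of the fill, and the reduction of each identity to its operator form — is routine given Theorem~\ref{thm:projection identities} and the skew Cauchy machinery of Section~\ref{sec:skew}.
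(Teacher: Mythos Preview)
Your proposal is correct and follows essentially the same approach as the paper: the paper treats Theorem~\ref{thm:skew Littlewood identities} by extending the triangular (dual) growth diagrams of Section~\ref{sec:triangular} exactly as you describe, setting $\Lambda_{0,i}=S^{(i)}$ for a prescribed skew tableau $S$ in the top row, and the algebraic half is the specialisation of Proposition~\ref{prop: general setting}(2) that you spell out. Your explicit handling of the conjugation step in the asymmetric cases and your observation about the final order $D_{x_1}\cdots D_{x_n}$ are accurate and match what the paper leaves implicit.
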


More explicitely, for a triangular array $C=(c_{i,j})_{1 \leq i\leq j\leq n}$ as before and a tableau $S \in \SSYT_{\la/\mu}(n)$ (resp. $S \in \SSYT_{\la/\mu^\prime}^*(n)$) we construct a triangular (dual) growth diagram as before with the only difference that we set $\Lambda_{0,i} = S^{(i)}$ for $0 \leq i \leq n$ instead of $\Lambda_{0,i} = \emptyset$. The following example illustrates this for the case of \eqref{eq:skew littlewood -1 asym}.

\begin{ex}
The triangular dual growth diagram associated to the pair $(C,S)$ for
\[
C = \begin{array}{cccc}
0 & 1 & 0 & 0 \\
  & 0 & 0 & 1 \\
  &   & 2 & 0 \\
  &   &   & 0
\end{array}, \qquad
S= \begin{tikzpicture}[baseline=.4cm]
\tyoung(0cm,0cm,:::34,:123,2)
\Ygray
\tgyoung(0cm,0cm,;;;,;)
\end{tikzpicture}\,, \qquad
\]
where we use the local triangular dual growth rules $\F_\bullet^{\row*}$ and $\F_{\bb{P}_\asym^{-1},\la,k}^{\row*}$ is shown next.
\begin{center}
\begin{tikzpicture}[scale=2.25]
\Yboxdim{.25cm}
\node at (1.5,-.5) {$1$};
\node at (3.5,-1.5) {$1$};
\node at (2.5,-2.5) {$2$};

\node at (0,0) {\yng(3,1)};
\node at (1,0) {\yng(3,2)};
\node at (2,0) {\yng(3,3,1)};
\node at (3,0) {\yng(4,4,1)};
\node at (4,0) {\yng(5,4,1)};

\node at (1,-1) {\yng(3,3)};
\node at (2,-1) {\yng(4,3,1,1)};
\node at (3,-1) {\yng(4,4,2,1)};
\node at (4,-1) {\yng(5,4,2,1)};

\node at (2,-2) {\yng(5,4,2,1)};
\node at (3,-2) {\yng(5,4,3,2)};
\node at (4,-2) {\yng(5,5,3,2)};

\node at (3,-3) {\yng(6,5,4,2,1)};
\node at (4,-3) {\yng(6,5,5,2,1)};

\node at (4,-4) {\yng(6,5,5,3,1)};

\draw (.25,0) -- (.75,0);
\draw (1.25,0) -- (1.75,0);
\draw (2.25,0) -- (2.7,0);
\draw (3.3,0) -- (3.65,0);

\draw (1.3,-1) -- (1.75,-1);
\draw (2.25,-1) -- (2.7,-1);
\draw (3.3,-1) -- (3.65,-1);

\draw (2.3,-2) -- (2.65,-2);
\draw (3.3,-2) -- (3.65,-2);

\draw (3.35,-3) -- (3.6,-3);

\draw (1,-0.2) -- (1,-0.8);
\draw (2,-0.25) -- (2,-0.7);
\draw (3,-0.25) -- (3,-0.7);
\draw (4,-0.25) -- (4,-0.7);

\draw (2,-1.3) -- (2,-1.7);
\draw (3,-1.3) -- (3,-1.7);
\draw (4,-1.3) -- (4,-1.7);

\draw (3,-2.3) -- (3,-2.65);
\draw (4,-2.3) -- (4,-2.65);

\draw (4,-3.35) -- (4,-3.65);

\end{tikzpicture}
\end{center}
By reading the last column of the above triangular dual growth diagram we obtain
\[
P = \begin{tikzpicture}[baseline=.9cm]
\tyoung(0cm,0cm,:::::3,::::2,:1233,124,3)
\Ygray
\tgyoung(0cm,0cm,;;;;;,;;;;,;)
\end{tikzpicture}\,.
\]
\end{ex}

\section{A general framework}
\label{sec:vier}
In the final section we describe the general set up for which our new approach yields an algebraic proof of Littlewood type identities. \bigskip

Let $P$ be a ranked poset 
 together with two functions $\psi,\psi^\perp:P^2 \rightarrow K$, where $K$ is a field and $\psi(\la, \mu) \neq 0$ or $\psi^\perp(\la, \mu) \neq 0$ respectively implies $\mu \subseteq \la$. Typically we write $\psi_{\la/\mu}$ and $\psi^\perp_{\la/\mu}$ instead of  $\psi(\la, \mu)$ or $\psi^\perp(\la, \mu)$.
We define the operators $U_x$ and $D_y$ called \deff{up operator} and \deff{down operator} respectively on the $K \lb x,y \rb$-module $P\lb x,y\rb$ of formal power series in $x,y$ whose coefficients are finite formal sums of $P$ over $K$ as
\[
U_x \la = \sum_{\nu} \psi_{\nu/\la} x^{\rk(\nu)-\rk(\la)}\nu, \qquad \qquad
D_y \la = \sum_{\mu } \psi^\perp_{\la/\mu} y^{\rk(\la)-\rk(\mu)}\mu,
\]
for partitions $\la$ and linearly extend it to $P\lb x,y\rb$.
For each pair $\la,\mu \in P$ and $\x=(x_1,\ldots,x_n)$ and $\y=(y_1,\ldots,y_m)$ we define the two polynomials
\[
s_{\la/\mu}(\x) := \left\langle U_{x_n} \cdots U_{x_1} \mu , \la \right\rangle, \qquad \qquad
s_{\la/\mu}^\perp(\y) := \left\langle D_{y_1} \cdots D_{y_m} \la , \nu \right\rangle,
\]
where $\left\langle \la, \mu \right\rangle:=\delta_{\la,\mu}$ for any elements $\la,\mu \in P$ and $\left\langle \cdot, \cdot \right\rangle$ is linearly extended in the first component to a $K\lb x,y \rb$-linear functional on $P\lb x,y \rb$. 
Note that the polynomials $s_{\la/\mu}(\x)$ and $s_{\la/\mu}^\perp(\y)$ are symmetric polynomials in $\x$ or $\y$ respectively if the up and down operators satisfy the commutation relation $U_{x_i}U_{x_j}=U_{x_j}U_{x_i}$ and $D_{y_i}D_{y_j}=D_{y_j}D_{y_i}$ respectively.

\begin{prop}
\label{prop: general setting}
\begin{enumerate}
\item Let $f(x,y) \in K(x,y)$ be a rational function. The up and down operator satisfy the commutation relation
\begin{equation}
\label{eq:general commutation relation}
D_yU_x = f(x,y) U_xD_y,
\end{equation}
if and only if the polynomials $s_\la(\x)$ and $s_\la^\perp(\y)$ satisfy the Cauchy type identities
\begin{equation}
\label{eq:general skew Cauchy}
\sum_{\nu} s_{\nu/\rho}(\x)s_{\nu/\la}^\perp(\y)
 = \prod_{\substack{1 \leq i \leq n \\ 1 \leq j \leq m}}f(x_i,y_j)
 \sum_{\mu} s_{\la/\mu}(\x)s_{\rho/\mu}^\perp(\y) ,
\end{equation}
for all $\la,\rho \in P$.
\item Let $g(x) \in K\lb x\rb$ be a formal power series and $\varphi:P\lb x \rb \rightarrow K\lb x\rb$ a $K\lb x\rb$-linear functional. If the up and down operator satisfy additionally to \eqref{eq:general commutation relation} the projection identity
\begin{equation}
\label{eq:general projection identity}
\varphi \circ U_x = g(x) \cdot \varphi \circ D_x
\end{equation}
then the polynomials $s_\la(\x)$ satisfy the identity
\begin{equation}
\label{eq:general Littlewood identity}
\sum_{\nu} \varphi(\nu) s_{\nu/\la}(\x) = \prod_{i=1}^n g(x_i) \prod_{1 \leq i < j \leq n} f(x_i,x_j) \sum_{\mu}\varphi(\mu)s_{\la/\mu}^\perp(\x),
\end{equation}
for all $\la \in P$.
\end{enumerate}
\end{prop}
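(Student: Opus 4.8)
The plan is to lift, to the level of the operators $U_x$ and $D_y$, the two arguments already spelled out in the excerpt: the derivation of the skew Cauchy identity from the commutation relation (end of Section~\ref{sec:zwei}), and the derivation of the classical Littlewood identity from the commutation relation together with a projection identity (Section~\ref{sec:projection identities}). The only non-formal point is that every sum that appears is coefficient-wise finite in $P\lb x_1,\ldots,x_n\rb$; this holds because, by hypothesis, $U_x\la$ and $D_y\la$ have coefficients that are finite formal sums over $P$, $\rk$ is bounded below the lowest monomial degree, and $\varphi$ is linear. I would dispatch this, together with the remark that $\langle\cdot,\cdot\rangle$, $\varphi$, and the operators extend from one variable to several in the compatible way (as in the conventions of Section~\ref{sec:zwei}), at the very start.

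For part~(1) I would compute the linear functional
\[
\mathcal{A}(\la,\rho):=\left\langle D_{y_1}\cdots D_{y_m}U_{x_n}\cdots U_{x_1}\rho,\la\right\rangle
\]
in two ways. Inserting $\sum_\nu \nu\left\langle\cdot,\nu\right\rangle$ between the down and the up operators and using the definitions of $s_{\nu/\rho}(\x)$ and $s^\perp_{\nu/\la}(\y)$ gives $\mathcal{A}(\la,\rho)=\sum_\nu s_{\nu/\rho}(\x)\,s^\perp_{\nu/\la}(\y)$, the left side of \eqref{eq:general skew Cauchy}. On the other hand, moving the down operators to the right past the up operators, \emph{starting with $D_{y_m}$}, the one adjacent to $U_{x_n}$, so that the $D$'s never have to be reordered among themselves, the commutation relation \eqref{eq:general commutation relation} contributes exactly one factor $f(x_i,y_j)$ for each pair $(i,j)$, and a second insertion of $\sum_\mu \mu\left\langle\cdot,\mu\right\rangle$ yields $\mathcal{A}(\la,\rho)=\prod_{i,j}f(x_i,y_j)\sum_\mu s_{\la/\mu}(\x)\,s^\perp_{\rho/\mu}(\y)$, the right side of \eqref{eq:general skew Cauchy}. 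Equating the two expressions proves one implication. For the converse I would specialise \eqref{eq:general skew Cauchy} to $\x=(x)$, $\y=(y)$: the left side collapses to $\left\langle D_yU_x\rho,\la\right\rangle$ and the right side to $\left\langle f(x,y)U_xD_y\rho,\la\right\rangle$, and since this holds for all $\la,\rho\in P$, the operator identity \eqref{eq:general commutation relation} follows by linearity — exactly the argument used in the excerpt to deduce Theorem~\ref{thm:commutation relations} from Theorem~\ref{thm:skew Cauchy identities}.

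For part~(2) the key step is the operator identity
\[
\varphi\bigl(U_{x_n}\cdots U_{x_1}\la\bigr)=\prod_{i=1}^n g(x_i)\prod_{1\le i<j\le n}f(x_i,x_j)\;\varphi\bigl(D_{x_1}\cdots D_{x_n}\la\bigr),
\]
which I would prove by induction on the number $n$ of up operators (the base case $n=1$ being precisely \eqref{eq:general projection identity}). In the inductive step I would apply the projection identity to the leftmost operator, $\varphi(U_{x_n}W)=g(x_n)\,\varphi(D_{x_n}W)$ with $W=U_{x_{n-1}}\cdots U_{x_1}\la$, using that $U_{x_n}$ and $D_{x_n}$ commute with multiplication by power series in $x_1,\dots,x_{n-1}$; then commute $D_{x_n}$ to the right past $U_{x_{n-1}},\dots,U_{x_1}$ via \eqref{eq:general commutation relation}, collecting $\prod_{i=1}^{n-1}f(x_i,x_n)$; then apply the inductive hypothesis to $\varphi\bigl(U_{x_{n-1}}\cdots U_{x_1}(D_{x_n}\la)\bigr)$, and regroup the scalar factors. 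Finally I would rewrite both sides in terms of skew functions: the left side equals $\sum_\nu\varphi(\nu)\left\langle U_{x_n}\cdots U_{x_1}\la,\nu\right\rangle=\sum_\nu\varphi(\nu)s_{\nu/\la}(\x)$, and $\varphi\bigl(D_{x_1}\cdots D_{x_n}\la\bigr)=\sum_\mu\varphi(\mu)\left\langle D_{x_1}\cdots D_{x_n}\la,\mu\right\rangle=\sum_\mu\varphi(\mu)s^\perp_{\la/\mu}(\x)$, giving exactly \eqref{eq:general Littlewood identity}.

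I do not expect a genuine obstacle: the content is formal manipulation of the up and down operators, and every step has a prototype in Sections~\ref{sec:zwei} and~\ref{sec:projection identities}. The one place requiring care — and where a referee might press — is the bookkeeping of operator orders: one must always move the down operator \emph{adjacent} to the block of up operators first (both in establishing \eqref{eq:general skew Cauchy} and in the induction for part~(2)), so that the $D$'s are never transposed with one another; this is essential because commutativity of the $D_{y_i}$ among themselves is not part of the hypotheses. The secondary point, which I would treat with a one-sentence remark, is the well-definedness of the linear extensions of $\varphi$ and $\langle\cdot,\cdot\rangle$ to several variables and the coefficient-wise convergence of the displayed sums.
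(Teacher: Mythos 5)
Your proposal is correct and follows essentially the same route as the paper: part (1) is exactly the paper's ``immediate generalisation'' of the two-way evaluation of $\left\langle D_{y_1}\cdots D_{y_m}U_{x_n}\cdots U_{x_1}\rho,\la\right\rangle$ together with the single-variable specialisation for the converse, and part (2) is the paper's chain of equalities (projection identity applied to the outermost $U_{x_n}$, then commuting $D_{x_n}$ through the remaining up operators, then repeating), which you merely recast as an induction on $n$. Your extra care about never transposing the down operators among themselves is consistent with, and slightly more explicit than, the paper's computation, which keeps the order $D_{x_1}\cdots D_{x_n}$ automatically.
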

\begin{proof}
The first part is classical and an immediate generalisation of the paragraph after Theorem~\ref{thm:skew Cauchy identities}.
For the second part, we have by definition
\begin{multline*}
\sum_{\nu} \varphi(\nu) s_{\nu/\la}(\x)
= \sum_\nu \varphi(\nu) \left\langle U_{x_n} \cdots U_{x_1} \la , \nu \right\rangle
= \varphi \circ U_{x_n} \cdots U_{x_1} \la 
 \\
 = g(x_n) \cdot \varphi \circ D_{x_n} U_{x_{n-1}} \cdots U_{x_1} \la
 \\
 = \cdots 
 = g(x_n) \prod_{i=1}^{n-1} f(x_i,x_n) \cdot \varphi \circ U_{x_{n-1}} \cdots U_{x_1} D_{x_n} \la 
  \\
 = \cdots 
 = \prod_{i=1}^n g(x_i) \prod_{1 \leq i<j \leq n} f(x_i,y_j) \cdot
 \varphi \circ D_{x_{1}} \cdots D_{x_n} \la 
 \\
= \prod_{i=1}^n g(x_i) \prod_{1 \leq i<j \leq n} f(x_i,y_j) \cdot
\sum_{\mu} \varphi(\mu) \left\langle D_{x_{1}} \cdots D_{x_n} \la  , \mu \right\rangle
\\
 = \prod_{i=1}^n g(x_i) \prod_{1 \leq i<j \leq n} f(x_i,y_j)
 \sum_\nu \varphi(\nu) s_{\la/\nu}^\perp(\x). \qedhere
\end{multline*}
\end{proof}

\section*{Acknowledgement}
The author thanks Seamus Albion, Gabriel Frieden, Ilse Fischer and Christian Krattenthaler for helpful discussions.

%
%
%
%

\bibliographystyle{abbrvurl}
\bibliography{Littlewood_revisited}

\begin{thebibliography}{10}

\bibitem{AyyerKumari22}
A.~Ayyer and N.~Kumari.
\newblock Factorization of classical characters twisted by roots of unity.
\newblock {\em J. Algebra}, 609:437--483, 2022.
\newblock \href {https://doi.org/10.1016/j.jalgebra.2022.06.015}
  {\path{doi:10.1016/j.jalgebra.2022.06.015}}.

\bibitem{Berele86}
A.~Berele.
\newblock A {S}chensted-type correspondence for the symplectic group.
\newblock {\em J. Combin. Theory Ser. A}, 43(2):320--328, 1986.
\newblock \href {https://doi.org/10.1016/0097-3165(86)90070-1}
  {\path{doi:10.1016/0097-3165(86)90070-1}}.

\bibitem{BufetovMatveev18}
A.~Bufetov and K.~Matveev.
\newblock {Hall-Littlewood RSK field}.
\newblock {\em Selecta Math. (N.S.)}, 24(5):4839--4884, 2018.
\newblock \href {https://doi.org/10.1007/s00029-018-0442-y}
  {\path{doi:10.1007/s00029-018-0442-y}}.

\bibitem{Burge74}
W.~H. Burge.
\newblock {Four correspondences between graphs and generalized Young tableaux}.
\newblock {\em J. Combin. Theory Ser. A}, 17:12--30, 1974.
\newblock \href {https://doi.org/10.1016/0097-3165(74)90024-7}
  {\path{doi:10.1016/0097-3165(74)90024-7}}.

\bibitem{ColmenarejoOrellanaSaliolaSchillingZabrocki20}
L.~Colmenarejo, R.~Orellana, F.~Saliola, A.~Schilling, and M.~Zabrocki.
\newblock An insertion algorithm on multiset partitions with applications to
  diagram algebras.
\newblock {\em J. Algebra}, 557:97--128, 2020.
\newblock \href {https://doi.org/10.1016/j.jalgebra.2020.04.010}
  {\path{doi:10.1016/j.jalgebra.2020.04.010}}.

\bibitem{Fomin86}
S.~V. Fomin.
\newblock {The generalized Robinson--Schensted--Knuth correspondence}.
\newblock {\em Zapiski Nauchn. Sem. LOMI}, 155:156--175, 1986.
\newblock \href {https://doi.org/10.1007/BF01247093}
  {\path{doi:10.1007/BF01247093}}.

\bibitem{Fomin95}
S.~V. Fomin.
\newblock Schur operators and {K}nuth correspondences.
\newblock {\em J. Combin. Theory Ser. A}, 72(2):277--292, 1995.
\newblock \href {https://doi.org/10.1016/0097-3165(95)90065-9}
  {\path{doi:10.1016/0097-3165(95)90065-9}}.

\bibitem{FriedenSchreierAigner24plus}
G.~Frieden and F.~Schreier-Aigner.
\newblock {qtRSK*: A probabilistic dual RSK correspondence for Macdonald
  polynomials}.
\newblock {\em \href{https://arxiv.org/abs/2403.16243}{arXiv:2403.16243}},
  2024.

\bibitem{Fulton97}
W.~Fulton.
\newblock {\em Young tableaux}, volume~35 of {\em London Mathematical Society
  Student Texts}.
\newblock Cambridge University Press, Cambridge, 1997.

\bibitem{HalversonLewandowski05}
T.~Halverson and T.~Lewandowski.
\newblock R{SK} insertion for set partitions and diagram algebras.
\newblock {\em Electron. J. Combin.}, 11(2):R24, 24pp., 2005.
\newblock \href {https://doi.org/10.37236/1881} {\path{doi:10.37236/1881}}.

\bibitem{Knuth70b}
D.~E. Knuth.
\newblock Permutations, matrices, and generalized {Y}oung tableaux.
\newblock {\em Pacific J. Math.}, 34:709--727, 1970.
\newblock \href {https://doi.org/10.2140/pjm.1970.34.709}
  {\path{doi:10.2140/pjm.1970.34.709}}.

\bibitem{Macdonald95}
I.~G. Macdonald.
\newblock {\em Symmetric functions and {H}all polynomials}.
\newblock Oxford Mathematical Monographs. The Clarendon Press, Oxford
  University Press, New York, second edition, 1995.
\newblock With contributions by A. Zelevinsky, Oxford Science Publications.

\bibitem{MatveevPetrov17}
K.~Matveev and L.~Petrov.
\newblock {$q$-randomized Robinson-Schensted-Knuth correspondences and random
  polymers}.
\newblock {\em Ann. Inst. Henri Poincar\'{e} D}, 4(1):1--123, 2017.
\newblock \href {https://doi.org/10.4171/AIHPD/36}
  {\path{doi:10.4171/AIHPD/36}}.

\bibitem{Okada91}
S.~Okada.
\newblock A {R}obinson-{S}chensted-type algorithm for {${\rm SO}(2n,{\bf C})$}.
\newblock {\em J. Algebra}, 143(2):334--372, 1991.
\newblock \href {https://doi.org/10.1016/0021-8693(91)90269-E}
  {\path{doi:10.1016/0021-8693(91)90269-E}}.

\bibitem{Okounkov01}
A.~Okounkov.
\newblock Infinite wedge and random partitions.
\newblock {\em Selecta Math. (N.S.)}, 7(1):57--81, 2001.
\newblock \href {https://doi.org/10.1007/PL00001398}
  {\path{doi:10.1007/PL00001398}}.

\bibitem{PatelPatelStokke22}
A.~Patel, H.~Patel, and A.~Stokke.
\newblock Orthosymplectic {C}auchy identities.
\newblock {\em Ann. Comb.}, 26(2):309--327, 2022.
\newblock \href {https://doi.org/10.1007/s00026-021-00565-0}
  {\path{doi:10.1007/s00026-021-00565-0}}.

\bibitem{Proctor90b}
R.~A. Proctor.
\newblock A {S}chensted algorithm which models tensor representations of the
  orthogonal group.
\newblock {\em Canad. J. Math.}, 42(1):28--49, 1990.
\newblock \href {https://doi.org/10.4153/CJM-1990-002-1}
  {\path{doi:10.4153/CJM-1990-002-1}}.

\bibitem{Robinson38}
G.~d.~B. Robinson.
\newblock On the {R}epresentations of the {S}ymmetric {G}roup.
\newblock {\em Amer. J. Math.}, 60(3):745--760, 1938.
\newblock \href {https://doi.org/10.2307/2371609} {\path{doi:10.2307/2371609}}.

\bibitem{Schensted61}
C.~Schensted.
\newblock Longest increasing and decreasing subsequences.
\newblock {\em Canad. J. Math.}, 13:179--191, 1961.
\newblock \href {https://doi.org/10.4153/CJM-1961-015-3}
  {\path{doi:10.4153/CJM-1961-015-3}}.

\bibitem{Sundaram90}
S.~Sundaram.
\newblock The {C}auchy identity for {${\rm Sp}(2n)$}.
\newblock {\em J. Combin. Theory Ser. A}, 53(2):209--238, 1990.
\newblock \href {https://doi.org/10.1016/0097-3165(90)90058-5}
  {\path{doi:10.1016/0097-3165(90)90058-5}}.

\bibitem{Sundaram90b}
S.~Sundaram.
\newblock Orthogonal tableaux and an insertion algorithm for {${\rm
  SO}(2n+1)$}.
\newblock {\em J. Combin. Theory Ser. A}, 53(2):239--256, 1990.
\newblock \href {https://doi.org/10.1016/0097-3165(90)90059-6}
  {\path{doi:10.1016/0097-3165(90)90059-6}}.

\bibitem{Terada93}
I.~Terada.
\newblock A {R}obinson-{S}chensted-type correspondence for a dual pair on
  spinors.
\newblock {\em J. Combin. Theory Ser. A}, 63(1):90--109, 1993.
\newblock \href {https://doi.org/10.1016/0097-3165(93)90027-6}
  {\path{doi:10.1016/0097-3165(93)90027-6}}.

\bibitem{vanLeeuwen05}
M.~A.~A. van Leeuwen.
\newblock Spin-preserving {K}nuth correspondences for ribbon tableaux.
\newblock {\em Electron. J. Combin.}, 12:R10, 65pp., 2005.
\newblock \href {https://doi.org/10.37236/1907} {\path{doi:10.37236/1907}}.

\bibitem{Viennot77}
G.~Viennot.
\newblock {Une forme g\'{e}om\'{e}trique de la correspondance de
  Robinson-Schensted}.
\newblock In {\em Combinatoire et repr\'{e}sentation du groupe sym\'{e}trique},
  volume 579 of {\em Lecture Notes in Math.}, pages 29--58. Springer-Verlag,
  Berlin/Heidelberg/New York, 1977.

\end{thebibliography}

\end{document}